\def\MR#1{}
\theoremstyle{plain}
\newtheorem{theorem}{Theorem}[section]
\newtheorem{lemma}[theorem]{Lemma}
\newtheorem{corollary}[theorem]{Corollary}
\theoremstyle{definition}
\theoremstyle{remark}
\newtheorem{remark}[theorem]{Remark}
\newcommand{\dv}{\operatorname{div}}
\newcommand{\osc}{\operatorname{osc}}
\numberwithin{equation}{section}
\newcommand{\bR}{\mathbb{R}}
\newcommand{\bS}{\mathbb{S}}
\newcommand\cD{\mathcal{D}}
\def\dashint{\operatorname%
{\,\,\text{\bf--}\kern-.98em\DOTSI\intop\ilimits@\!\!}}
\begin{document}
\title[The insulated conductivity problem]{The insulated conductivity problem with $p$-Laplacian}
\author[H. Dong]{Hongjie Dong}

\author[Z. Yang]{Zhuolun Yang}

\author[H. Zhu]{Hanye Zhu}

\address[H. Dong]{Division of Applied Mathematics, Brown University, 182 George Street, Providence, RI 02912, USA}
\email{Hongjie\_Dong@brown.edu}

\address[Z. Yang]{Institute for Computational and Experimental Research in Mathematics, Brown University, 121 South Main Street, Providence, RI 02903, USA}
\email{zhuolun\_yang@brown.edu}

\address[H. Zhu]{Division of Applied Mathematics, Brown University, 182 George Street, Providence, RI 02912, USA}
\email{hanye\_zhu@brown.edu}

\thanks{H. Dong is partially supported by Simons Fellows Award 007638 and the NSF under agreement DMS-2055244.}
\thanks{Z. Yang is partially supported by Simons Foundation Institute Grant Award ID 507536.}
\subjclass[2020]{35J92, 35Q74, 74E30, 74G70, 78A48}
\thanks{H. Zhu is partially supported by the NSF under agreement DMS-2055244.}
\keywords{optimal gradient estimates, high contrast coefficients, insulated conductivity problem, $p$-Laplace equation, field concentration}
%
%
\begin{abstract}
We study the insulated conductivity problem with closely spaced insulators embedded in a homogeneous matrix where the current-electric field relation is the power law $J = |E|^{p-2}E$. The gradient of solutions may blow up as $\varepsilon$, the distance between insulators, approaches to 0. We prove an upper bound of the gradient to be of order $\varepsilon^{-\alpha}$, where $\alpha = 1/2$ when $p \in(1,n+1]$ and any $\alpha > n/(2(p-1))$ when $p > n + 1$. We provide examples to show that this exponent is almost optimal  in 2D. Additionally in dimensions $n \ge 3$, for any $p > 1$, we prove another upper bound of order $\varepsilon^{-1/2 + \beta}$ for some $\beta > 0$, and show that $\beta \nearrow 1/2$ as $n \to \infty$.
\end{abstract}

\maketitle

\section{Introduction and Main results}
We investigate the phenomenon of electric field concentration in high-contrast composites. Such a phenomenon can occur when two approaching inclusions possess material properties that differ significantly from the background matrix (see e.g. \cites{Kel, Mar, BudCar}). The study of this area originated from \cite{BASL}, where the problem with inclusions closely located in a linear background medium was studied numerically. In this paper, we study the scenario in which the inclusions are insulators, and the background matrix follows the current-electric field relation described by the power law:
\begin{equation}\label{power_law}
J = \sigma |E|^{p-2} E, \quad p > 1,
\end{equation}
where $J$, $E$, and $\sigma$ denote current, electric field, and conductivity, respectively. Physically, such power law can occur in various materials, including dielectrics, plastic moulding, plasticity phenomena, viscous flows in glaciology, electro-rheological and thermo-rheological fluids. See the second paragraph of \cite{BIK} and the references therein.

Let us describe the mathematical setup: let $n\ge 2$, $\Omega \subset \mathbb{R}^n $ be a bounded domain with $C^{1,1}$ boundary containing two $C^{1,1}$ open sets $\mathcal{D}_{1}$ and $\mathcal{D}_{2}$ with dist$(\mathcal{D}_1 \cup \mathcal{D}_2, \partial \Omega) > c > 0$. Let
$$\varepsilon: = \mbox{dist}(\mathcal{D}_1, \mathcal{D}_2),$$
$\widetilde{\Omega} := \Omega \setminus \overline{(\mathcal{D}_1 \cup \mathcal{D}_2)}$, and $\sigma = \mathbbm{1}_{\widetilde{\Omega}}$. The voltage potential $u$ satisfies the following $p$-Laplace equation with $p >1$:
\begin{equation}\label{equzero}
\left\{
\begin{aligned}
-\dv (|D u|^{p-2} D u) &=0 \quad \mbox{in }\widetilde{\Omega},\\
\frac{\partial u}{\partial \nu} &= 0 \quad \mbox{on}~\partial\mathcal{D}_{i},~i=1,2,\\
 u &= \varphi \quad \mbox{on } \partial \Omega,
\end{aligned}
\right.
\end{equation}
where $\varphi\in{C}^{1,1}(\partial\Omega)$ is given, and $\nu = (\nu_1, \ldots, \nu_n)$ denotes the inner normal vector on $\partial\mathcal{D}_{1} \cup \partial\mathcal{D}_{2}$.

Our goal is to quantitatively analyze the concentration of the electric field $E = -D u$ between the inclusions, and this is a challenging problem even in the linear case when $p=2$. While the optimal blow-up rate for the linear case in two dimensions was captured about two decades ago in \cites{AKL,AKLLL}, the optimal rate in dimensions $n \ge 3$ was only recently identified in \cites{DLY,DLY2}. This optimal rate is linked to the first non-zero eigenvalue of an elliptic operator on $\bS^{n-2}$, which is determined by the principal curvatures of the inclusions. This phenomenon is completely different from the perfect conductivity problem, where the optimal blow-up rates do not depend on the curvatures of the inclusions (see \cites{AKL,AKLLL,BLY1}). For other earlier work on the linear insulated conductivity problem, we refer the reader to \cites{BLY2, LY, LY2, Y3, We}.
In the case when the current-electric field relation is given by \eqref{power_law}, Gorb and Novikov in \cite{GorNov} and Ciraolo and Sciammetta in \cite{CirSci} studied the field concentration when $\mathcal{D}_1$ and $\mathcal{D}_2$ are perfect conductors. They proved that for $n \ge 2$,
$$
\| \nabla u\|_{L^\infty(\widetilde \Omega)} \le
\left\{
\begin{aligned}
&C\varepsilon^{-\frac{n-1}{2(p-1)}} && p > \frac{n+1}{2},\\
&C\varepsilon^{-1}|\log \varepsilon|^{\frac{1}{p-1}} && p = \frac{n+1}{2},\\
&C\varepsilon^{-1} && p < \frac{n+1}{2}.
\end{aligned}
\right.
$$
These bounds were shown to be optimal in their respective papers. However, the phenomenon of electric field concentration between insulators has not been studied before.

Before stating our main results, let us introduce some notation. We denote $x = (x', x_n)$, where $x' \in \mathbb{R}^{n-1}$. By choosing a coordinate system properly, we can assume that near the origin,  the part of $\partial \mathcal{D}_1$ and $\partial \mathcal{D}_2$, denoted by $\Gamma_+$ and $\Gamma_-$, are respectively the graphs of two $C^{1,1}$ functions in terms of $x'$. That is
\begin{align*}
\Gamma_+ = \left\{ x_n = \frac{\varepsilon}{2}+h_1(x'),~|x'|<1\right\},\quad \Gamma_- = \left\{ x_n = -\frac{\varepsilon}{2}+h_2(x'),~|x'|<1\right\},
\end{align*}
where $h_1$ and $h_2$ are $C^{1,1}$ functions satisfying
\begin{equation}\label{fg_0}
h_1(0')=h_2(0')=0,\quad D_{x'}h_1(0')=D_{x'}h_2(0')=0,
\end{equation}
\begin{equation}\label{fg_1}
c_1 |x'|^2 \le h_1(x')-h_2(x')\quad\mbox{for}~~0<|x'|<1,
\end{equation}
\begin{equation}\label{def:c_2}
    \|h_1\|_{C^{1,1}}\leq c_2, \quad\|h_2\|_{C^{1,1}}\leq c_2,
\end{equation}
with some positive constants $c_1$, $c_2$. For $x_0 \in \widetilde\Omega$, $0 < r\leq 1$, we denote
\begin{align*}
\Omega_{x_0,r}:=\left\{(x',x_{n})\in \widetilde{\Omega}:~-\frac{\varepsilon}{2}+h_2(x')<x_{n}<\frac{\varepsilon}{2}+h_1(x'),~|x'-x_0' |<r\right\},
\end{align*}
and $\Omega_{r}:=\Omega_{0,r}$.
We use $B_r(x_0)$ to denote the open ball of radius $r$ centered at $x_0$ and we set
$$
B_r=B_r(0),\quad
\Omega_r(x_0)=\Omega\cap B_r(x_0).
$$
By classical $C^{1,\alpha}$ estimates for the $p$-Laplace equation and the maximum principle (see e.g. \cites{Lie,Vaz}), the solution $u \in W^{1,p}(\widetilde\Omega)$ of \eqref{equzero} satisfies
\begin{equation*}
\|u\|_{L^\infty(\widetilde\Omega)} + \|u\|_{C^{1,\alpha}(\widetilde\Omega \setminus \Omega_{1/2})} \le C\|\varphi\|_{C^{1,1}(\partial \Omega)}
\end{equation*}
for some constants $\alpha=\alpha(n,p)\in (0,1)$ and $C>0$ independent of $\varepsilon$, $\varphi$, and $u$.
As such, we focus on the following problem near the origin:
\begin{equation}\label{main_problem_narrow}
\left\{
\begin{aligned}
-\dv (|D u|^{p-2} D u) &=0 \quad \mbox{in }\Omega_{1},\\
\frac{\partial u}{\partial \nu} &= 0 \quad \mbox{on } \Gamma_+ \cup \Gamma_-.\\
\end{aligned}
\right.
\end{equation}

For any domain $\mathcal{D}$, we denote the oscillation of $u$ in $\mathcal{D}$ by
$$
\underset{\mathcal{D}}{\osc}~u:= \sup_\mathcal{D} u - \inf_\mathcal{D} u.
$$

Our first main result is the following pointwise gradient estimate of order $\varepsilon^{-1/2}$ for any $p>1$ and $n\geq 2$.
\begin{theorem}
\label{thm-1/2}
Let $h_1$, $h_2$ be  $C^{1,1}$ functions satisfying \eqref{fg_0}-\eqref{def:c_2}, $p>1$, $n \ge 2$, $\varepsilon \in(0,1)$, and $u \in W^{1,p}({\Omega}_1)$ be a solution of \eqref{main_problem_narrow}. Then there exists a constant $C>0$ depending only on $n$, $p$, $c_1$, and $c_2$, such that for any $x\in \Omega_{1/2}$ and $\eta=\frac{1}{4}(\varepsilon+|x'|^2)^{\frac{1}{2}}$,
\begin{equation}\label{gradient-1/2}
|D u(x)| \le  C (\varepsilon+|x'|^2)^{-\frac{1}{2}}\underset{\Omega_{x,\eta}}{\osc}~u.
\end{equation}
\end{theorem}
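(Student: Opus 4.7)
The plan is to prove the pointwise bound via an isotropic rescaling that reduces it to a uniform oscillation-to-Lipschitz estimate for $p$-harmonic functions on a curved, narrow slab with zero Neumann data. Fix $x\in\Omega_{1/2}$, set $\delta:=\varepsilon+|x'|^2$ and $\eta:=\tfrac14\sqrt{\delta}$, and define the dilated function $v(y):=u(x'+\eta y',\eta y_n)$. By scale invariance of the $p$-Laplacian, $v$ again solves $-\dv_y(|D_yv|^{p-2}D_yv)=0$ on the dilated region
\begin{equation*}
D:=\bigl\{(y',y_n):|y'|<1,\ \tilde f_-(y')<y_n<\tilde f_+(y')\bigr\},
\end{equation*}
where $\tilde f_+(y'):=\eta^{-1}(\varepsilon/2+h_1(x'+\eta y'))$ and $\tilde f_-(y'):=\eta^{-1}(-\varepsilon/2+h_2(x'+\eta y'))$, with zero Neumann data on the graphs $y_n=\tilde f_\pm(y')$. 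A direct computation using \eqref{fg_0}--\eqref{def:c_2} and \eqref{fg_1} shows that $\tilde f_\pm\in C^{1,1}$ with norms bounded uniformly in $\eta$ and $x'$, and the slab height $\tilde f_+(y')-\tilde f_-(y')$ is comparable to $\eta$ on $|y'|\le 1$. Writing $y^*:=(0,x_n/\eta)\in D$, one has $|D_yv(y^*)|=\eta|Du(x)|$ and $\osc_D v=\osc_{\Omega_{x,\eta}}u=:M$, so the target estimate \eqref{gradient-1/2} reduces to the uniform-in-$\eta$ claim $|D_yv(y^*)|\le CM$.

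To establish this, I would combine a Caccioppoli inequality with a boundary Lipschitz estimate on the narrow slab. Testing the equation against $\zeta^p(v-\bar v)$ for a standard cutoff $\zeta$ supported in $D$ and the mean value $\bar v$ of $v$ yields the energy bound $\bigl(\fint_{D'}|D_yv|^p\bigr)^{1/p}\le CM$ on a subdomain $D'\Subset D$; the boundary terms vanish because $|D_yv|^{p-2}D_yv\cdot\nu=0$ on the Neumann graphs. The pointwise bound at $y^*$ then follows from the uniform $L^\infty$-from-$L^p$ boundary regularity estimate
\begin{equation*}
|D_yv(y^*)|\ \le\ C\bigl(\fint_{D'}|D_yv|^p\bigr)^{1/p},
\end{equation*}
with $C$ independent of the slab height $\eta$.

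The principal obstacle is this uniform Lipschitz estimate: the distance from $y^*$ to the nearest Neumann boundary of $D$ is of order $\eta\to 0$, and the standard interior Lipschitz estimate of Uhlenbeck/DiBenedetto/Tolksdorf applied on a ball $B_{\eta/4}(y^*)\subset D$ would give only a bound of order $M\eta^{(1-n)/p}$, which blows up as $\eta\to 0$. Instead I would flatten the two curved boundaries by a bi-Lipschitz change of variables that preserves the vertical length scale $\eta$: a naive vertical rescaling by $1/\eta$ would introduce an anisotropy of size $1/\eta$ into the transformed equation and destroy uniform ellipticity. After the midline straightening $(y',y_n)\mapsto(y',y_n-\tfrac12(\tilde f_++\tilde f_-)(y'))$, a $C^{1,1}$ near-identity diffeomorphism since $D(\tilde f_++\tilde f_-)=O(\eta)$, the remaining boundaries $z_n=\pm\tilde h(z')$ with $\tilde h\asymp\eta$ are $O(\eta)$-close in $C^{1,1}$ to the flat planes $z_n=\pm\eta$. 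Iterated Schwarz reflections across the flat approximating planes, which are legitimate for $p$-harmonic functions with Neumann data on flat boundaries, extend the transformed function (up to a controlled $C^{1,1}$-perturbation from the residual curvature, handled by a standard perturbation argument) to a $p$-harmonic-type function on a unit-diameter region containing $y^*$, on which the classical interior Lipschitz estimate yields the uniform bound $|D_zv|\le CM$. Transferring back through the rescaling completes the proof of \eqref{gradient-1/2}.
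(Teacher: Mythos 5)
Your overall reduction is on the right track: the rescaling by $\eta$, the scale invariance of the $p$-Laplacian, the computation $|D_yv(y^*)|=\eta|Du(x)|$, the choice of $y^*$, and the Caccioppoli step to get an $L^p$-average of $D_yv$ on an interior subdomain are all correct and match the spirit of the paper (compare its \emph{Step 4}). The gap is in the middle step, and it is not a detail that can be filled by a routine perturbation.

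First, after your midline straightening the boundaries $z_n=\pm\tilde h(z')$ satisfy $\tilde h\asymp\eta$, but $\tilde h$ \emph{varies by an amount comparable to $\eta$} as $z'$ ranges over $|z'|\le 1$; indeed one checks from \eqref{fg_0}--\eqref{def:c_2} that $|\tilde h(z')-\tilde h(0)|\lesssim \eta$ and already in the model case $h_1=-h_2=|x'|^2$ the ratio $\sup\tilde h/\inf\tilde h$ can approach a constant $>1$. So the graphs are \emph{not} an $o(\eta)$-perturbation of flat planes; relative to the slab thickness the deviation is of order one, reflection across $\{z_n=\pm\eta\}$ does not approximately preserve the Neumann condition, and there is no ``standard perturbation argument'' that closes the loop. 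To proceed one must flatten both boundaries \emph{exactly}; this is what the paper's map $\tilde\Lambda$ in Section~\ref{sec1.4} does (it is affine in $x_n$, mapping the curved slab to a flat cylinder of constant thickness $H(x_0')$), and is a step your proposal never actually carries out.

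Second, even after an exact flattening and the even/odd/periodic extension, the transformed operator is \emph{not} a $p$-Laplacian with smooth coefficients: the entries $b_{ni}$ do not vanish on the reflection planes, so the periodically extended coefficients have jump discontinuities in the $z_n$ direction. The classical Uhlenbeck--DiBenedetto--Tolksdorf interior Lipschitz estimate, which you invoke at the end, requires frozen or at least continuous coefficients and does not apply. What one actually has is a nonlinear operator $\mathbf{B}(\mathcal Z,\xi)$ satisfying only the $L^\infty$ closeness $|\mathbf{B}(\mathcal Z,\xi)-|\xi|^{p-2}\xi|\lesssim (r/\sqrt{\delta})\,|\xi|^{p-1}$ on balls of radius $r$ (the paper's \eqref{co-diff5}). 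Turning this into a pointwise gradient bound is a Campanato/mean-oscillation iteration: compare with the homogeneous $p$-Laplacian on each scale, sum the dyadic errors, and run a stopping-time argument (the paper's Lemmas~\ref{lem:mean1}--\ref{lem:mean3} and Step~3 of its proof). This iteration is the missing engine in your proposal; without it, the uniform bound $|D_yv(y^*)|\le CM$ does not follow from what you have written.
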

When $n\geq 3$, we improve the upper bound in Theorem \ref{thm-1/2} to the order of $\varepsilon^{-1/2 + \beta}$ for some $\beta>0$.
\begin{theorem}
\label{Theorem_improved}
Let $h_1$, $h_2$ be $C^{1,1}$ functions satisfying \eqref{fg_0}-\eqref{def:c_2}, $p > 1$, $n \ge 3$, $\varepsilon\in(0,1)$, and $u \in W^{1,p}(\Omega_{1})$ be a solution of \eqref{main_problem_narrow}. Then there exist positive constants $C$ and $\beta$ depending only on $n$, $p$, $c_1$, and $c_2$, such that for any $x\in\Omega_{1/2}$,
\begin{equation}\label{gradient_-1/2+beta}
|D u(x)| \le C(\varepsilon + |x'|^2)^{-1/2 + \beta}\underset{\Omega_{1}}{\osc}~u.
\end{equation}
\end{theorem}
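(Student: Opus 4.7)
The plan is to upgrade Theorem~\ref{thm-1/2} by proving an oscillation decay estimate
\begin{equation*}
\underset{\Omega_r}{\osc}\,u \le C r^{2\beta}\underset{\Omega_1}{\osc}\,u,\qquad r\in[\sqrt{\varepsilon},1/2],
\end{equation*}
for some $\beta=\beta(n,p,c_1,c_2)>0$, and then feeding this back into \eqref{gradient-1/2}. For $x\in\Omega_{1/2}$ with $|x'|\ge\sqrt{\varepsilon}$, choosing $r\sim|x'|$ gives $\underset{\Omega_{x,\eta}}{\osc}\,u\le C(\varepsilon+|x'|^2)^{\beta}\underset{\Omega_1}{\osc}\,u$; for $|x'|<\sqrt{\varepsilon}$ the set $\Omega_{x,\eta}$ is contained in $\Omega_{C\sqrt{\varepsilon}}$ and the same bound holds at $r=\sqrt{\varepsilon}$. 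Combined with \eqref{gradient-1/2} this yields \eqref{gradient_-1/2+beta}.

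The central step is an improvement-of-oscillation lemma: there exist $\mu,\theta\in(0,1)$, depending only on $n$, $p$, $c_1$, $c_2$, such that
\begin{equation*}
\underset{\Omega_{\theta r}}{\osc}\,u \le \mu\,\underset{\Omega_r}{\osc}\,u \qquad\text{for all } r\in\bigl[\sqrt{\varepsilon}/\theta,\,1/2\bigr].
\end{equation*}
To prove it I would first straighten the insulating boundaries via $\bar{x}_n=x_n-\tfrac{1}{2}(h_1(x')+h_2(x'))$, then apply the anisotropic rescaling $z'=x'/r$, $z_n=\bar{x}_n/(\varepsilon+r^2)$, which maps $\Omega_r$ onto a unit cylinder $Q=\{|z'|<1,\,|z_n|<1/2\}$ with homogeneous Neumann data on $\{z_n=\pm 1/2\}$. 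Because $r\ge\sqrt{\varepsilon}$, the ratio $r^2/(\varepsilon+r^2)\in[1/2,1]$ is bounded away from $0$, so the rescaled function $\tilde u$ solves a uniformly anisotropic quasilinear equation of $p$-Laplace type whose structure constants are independent of $\varepsilon$ and $r$. When $n\ge 3$, the tangential direction $z'\in\mathbb{R}^{n-1}$ has at least two degrees of freedom, and a De Giorgi / weak Harnack argument adapted to this anisotropic degenerate operator with Neumann boundary should yield a uniform modulus of continuity for $\tilde u$ in $z'$; undoing the rescaling then gives the required contraction.

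Iterating the contraction dyadically from $r=1/2$ down to $r\sim\sqrt{\varepsilon}$ produces the desired decay with $2\beta=\log_{1/\theta}(1/\mu)$. The main obstacle is establishing the contraction constant $\mu<1$ uniformly in $\varepsilon$: the rescaled equation is simultaneously degenerate (through $|D\tilde u|^{p-2}$) and anisotropic, and the straightened Neumann boundary still carries perturbation terms with coefficients inherited from $h_1$ and $h_2$ at unit scale. Executing a De Giorgi or Moser iteration that tolerates both features without deteriorating as $\varepsilon\to 0$ is the technical heart of the argument, and tracking the dimensional dependence of the Hölder exponent extracted for $\tilde u$ on the $(n-1)$-dimensional cross section should account for the monotone behavior $\beta\nearrow 1/2$ as $n\to\infty$ announced in the abstract.
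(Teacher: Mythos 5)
Your high-level skeleton is the paper's: prove oscillation decay $\osc_{\Omega_r} u \le Cr^{2\beta}\osc_{\Omega_1}u$ for $r\in(\sqrt{\varepsilon},1/2]$, then feed it into Theorem \ref{thm-1/2}. You also correctly sense that $n\ge 3$ is needed to have ``room'' in the tangential direction. But the central step --- the claim that after straightening and anisotropically rescaling, ``$\Omega_r$ maps onto a unit cylinder \dots\ with structure constants independent of $\varepsilon$ and $r$'' --- does not hold, and the gap is precisely the one that makes this problem hard.

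The set $\Omega_r$ has gap width $\varepsilon + h_1(x') - h_2(x')$, which ranges from about $\varepsilon$ at $x'=0$ to about $\varepsilon + c r^2$ at $|x'|=r$. Under your map $z'=x'/r$, $z_n = \bar x_n/(\varepsilon+r^2)$, the image has height $\sim \varepsilon/(\varepsilon+r^2)$ near $z'=0$ and $\sim 1$ near $|z'|=1$; for $r\gg\sqrt{\varepsilon}$ (e.g.\ $r=1/2$) the ratio is $\sim 1/\varepsilon$, so the image is a severely pinched ``lens,'' not a uniform cylinder. Any Harnack or De Giorgi constant on this domain degenerates as $\varepsilon\to 0$, and the contraction $\mu<1$ you seek would not be uniform. (Also, subtracting $\tfrac12(h_1+h_2)$ does not flatten the two boundaries --- they remain the graphs $\bar x_n = \pm\tfrac12(\varepsilon + h_1 - h_2)$ --- so you have not even reduced to a cylinder geometrically, only recentered.) Alternatively, rescaling $z_n$ by the position-dependent gap width does produce a true cylinder, but then the coefficients of the transformed equation blow up near $z'=0$ for the same reason; either way the neck at the origin kills the uniform estimate.

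What the paper does instead is avoid the neck altogether: the change of variables $\Phi$ is applied only on the annulus $\Omega_{2r}\setminus\Omega_{r/4}$, where the gap is uniformly comparable to $r^2$ (since $r\ge\sqrt{\varepsilon}$), so the image is a genuinely uniform annular cylinder $Q_{2r,r^2}\setminus Q_{r/4,r^2}$ with $\varepsilon$-independent structure. That annulus is \emph{connected} in the tangential variable precisely when $n-1\ge 2$, i.e.\ $n\ge 3$, which is where the dimensional restriction really enters. The Krylov--Safonov Harnack inequality on the annulus, plus the maximum principle to pass from $\sup/\inf$ on $\Omega_r\setminus\Omega_{r/2}$ to $\sup/\inf$ on all of $\Omega_r$, then gives the oscillation contraction. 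Two further technical points your sketch omits: (i) the flattening map must be built to make $v=u\circ\Phi$ satisfy exact Neumann conditions on flat faces, and since the non-divergence coefficients involve $D^2\Phi$ one needs third derivatives of $h_i$, which for $C^{1,1}$ data requires the mollified construction \eqref{def_g}--\eqref{mollification}; (ii) because the $p$-Laplacian is degenerate at critical points, the paper works with the regularized equation \eqref{main_problem_approximate} and the uniformly elliptic normalized operator in non-divergence form, which is why Krylov--Safonov (rather than a divergence-form De Giorgi/Moser scheme) is the right tool. Without the annular localization and these two fixes, the argument does not close.
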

When $p > n + 1$, we derive a more explicit upper bound, under an additional assumption that $h_1$ and $h_2$ are $C^{2}$ strictly convex and strictly concave, respectively. That is, for some positive constants $\kappa_1$ and $\kappa_2$,
\begin{equation}\label{fg_convex}
\kappa_1I_{n-1} \le D^2 h_1(x') \le \kappa_2 I_{n-1}, \quad  \kappa_1I_{n-1} \le -D^2 h_2(x') \le \kappa_2 I_{n-1} \quad\mbox{for}~~0\leq |x'|<1.
\end{equation}
Our pointwise gradient estimate of order ${-\frac{n+2\delta}{2(p-1)}}$, for any $\delta>0$ when $p > n + 1$, is as follows.

\begin{theorem}\label{thm:2d}
Let $n\geq 2$, $p > n+1$, $h_1$, $h_2$ be $C^{2}$ functions  satisfying \eqref{fg_0} and \eqref{fg_convex}, and let $u \in W^{1,p}(\Omega_1)$ be a solution of \eqref{main_problem_narrow}. Then for any $\delta>0$ and $\varepsilon\in(0,1)$, we have
\begin{equation} \label{gradient_2d}
|D u(x)| \le C (\varepsilon + x_1^2)^{-\frac{n+2\delta}{2(p-1)}}\underset{\Omega_{1}}{\osc}~u \quad \mbox{for}~~x \in \Omega_{1/2},
\end{equation}
where $C$ is a positive constant depending on $n$, $p$, $\delta$, $\kappa_1$, $\kappa_2$, and the modulus of continuity for $D^2h_1(x')$ and $D^2h_2(x')$ at $x' = 0$.
\end{theorem}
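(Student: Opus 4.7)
The plan is to reduce Theorem \ref{thm:2d} to an oscillation decay estimate for $u$ on dyadic scales $r \in [\sqrt{\varepsilon},1/4]$, and then to deduce this decay by a Campanato-type iteration that exploits the hypothesis $p > n+1$. With $\eta := \tfrac{1}{4}(\varepsilon+|x'|^2)^{1/2}$ as in Theorem \ref{thm-1/2}, the bound \eqref{gradient-1/2} gives $|Du(x)| \le C\eta^{-1}\underset{\Omega_{x,\eta}}{\osc}\,u$, so \eqref{gradient_2d} would follow from
\begin{equation}\label{plan:osc}
\underset{\Omega_{x,\eta}}{\osc}\,u \le C \eta^{\alpha}\underset{\Omega_1}{\osc}\,u, \qquad \alpha := 1 - \frac{n+2\delta}{p-1},
\end{equation}
where $\alpha > 0$ precisely because $p > n+1$ and $\delta$ can be taken small. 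Since $\eta \gtrsim \sqrt{\varepsilon}$, it suffices to prove the dyadic decay $\underset{\Omega_{r/2}}{\osc}\,u \le 2^{-\alpha}\underset{\Omega_r}{\osc}\,u$ for $r$ in this range.

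To prove this oscillation decay, I would first rescale anisotropically. For $r \in [\sqrt{\varepsilon},1/4]$, set $\tilde u(y) := u(ry',r^2 y_n)$; the rescaled domain is uniformly close to the fixed cylindrical region bounded between the limiting paraboloids $y_n = \pm \tfrac{1}{2}D^2 h_i(0')y' \cdot y'$. Hypothesis \eqref{fg_convex} together with the modulus-of-continuity assumption on $D^2 h_i$ at $0'$ controls this perturbation uniformly in $r$, with error tending to $0$ as $r \to 0$. The rescaled function satisfies a degenerate anisotropic $p$-Laplace equation with Neumann conditions on both paraboloids; crucially, the effective carrying dimension is $n+1$, since the volume of $\Omega_r$ scales like $r^{n+1}$ once $r \ge \sqrt{\varepsilon}$.

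On this fixed-geometry region I would derive a standard Caccioppoli inequality
\begin{equation*}
\int_{\Omega_{r/2}}|Du|^p\,dx \le \frac{C}{r^p}\int_{\Omega_r}|u-\bar u|^p\,dx,
\end{equation*}
by testing the equation against $(u-\bar u)\zeta^p$ for an appropriate cutoff $\zeta$, and combine it with a Poincaré--Sobolev inequality tailored to $\Omega_r$. Because the effective dimension $n+1$ is strictly less than $p$, a Morrey-type embedding yields a first, quantitative Hölder continuity of $u$ at each dyadic scale. Iterating this improvement through the Campanato characterization of Hölder spaces, and using the $(p-1)$-homogeneity of the $p$-Laplace operator to pass from the crude exponent $1-(n+1)/p$ to the sharper nonlinear exponent $1-n/(p-1)$, would yield \eqref{plan:osc} for any $\alpha < 1 - n/(p-1)$.

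The main obstacle will be the sharpness of the exponent. A one-shot Morrey embedding gives only $\alpha = 1 - (n+1)/p$, and closing the gap to $\alpha = 1 - n/(p-1) - \tfrac{2\delta}{p-1}$ requires a genuinely nonlinear self-improvement argument that (i) exploits the $(p-1)$-homogeneity of the $p$-Laplacian so that $p-1$, not $p$, appears in the denominator of the exponent, and (ii) absorbs the perturbation terms arising from the non-cylindrical rescaled geometry using the modulus of continuity of $D^2 h_i$ at $0'$. The small loss $2\delta/(p-1)$ in the exponent is precisely the residue of these perturbation errors, explaining why the theorem is stated with an arbitrary $\delta > 0$ rather than with $\delta = 0$; achieving the borderline exponent $n/(2(p-1))$ would require eliminating these perturbation losses, which seems genuinely harder within this framework.
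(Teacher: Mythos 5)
Your first step — reducing to an oscillation decay estimate and combining it with Theorem \ref{thm-1/2} — is exactly what the paper does, and the target exponent $\alpha = 1 - \tfrac{n+2\delta}{p-1}$ is correct. However, your proposed route for proving that decay has a genuine gap, and you in fact identify it yourself without closing it. The Morrey embedding applied in ``effective dimension'' $n+1$ gives at best the exponent $1 - (n+1)/p$, while the target is $1 - n/(p-1) - \tfrac{2\delta}{p-1}$. Since $(n+1)/p > n/(p-1)$ is equivalent to $p > n+1$ — precisely the hypothesis — the Morrey exponent is \emph{strictly worse} in the entire regime where the theorem applies. The ``genuinely nonlinear self-improvement'' you invoke to pass from $1-(n+1)/p$ to $1-n/(p-1)$ is not supplied, and it is not a standard iteration: Caccioppoli--Sobolev--Campanato iteration produces a geometric decay whose rate is governed by the linear embedding exponent, not by the nonlinear fundamental-solution exponent $(p-1-n)/(p-1)$. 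Getting that exponent requires capturing the exact structure of the singular solution, which is not something energy-method iteration does for free. There is also a secondary difficulty you gloss over: the anisotropic rescaling $y=(x'/r, x_n/r^2)$ turns the operator into something with gradient terms scaling as $r^{-1}D_{y'}$ and $r^{-2}D_{y_n}$, which is not a $p$-Laplacian in $n+1$ dimensions in any clean sense; the Poincar\'e--Sobolev step would need a bespoke anisotropic inequality, and controlling the boundary of the slab region requires more than the heuristic about volume scaling.

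The paper's proof sidesteps all of this by constructing an explicit supersolution: with $\gamma = \tfrac{p-1-n-2\delta}{p-1}$, the barrier $v(x) = \big(|x'|^2 + (2+\delta)x_n^2\big)^{\gamma/2}$ is shown (Lemma \ref{supersolution_lemma}) to satisfy $-\dv(|Dv|^{p-2}Dv) > 0$ in $\Omega_{\mu/\kappa_2}\setminus\Omega_{\varepsilon/\mu}$ and $\partial v/\partial\nu > 0$ on $(\Gamma_+\cup\Gamma_-)\cap\overline{\Omega}_{\mu/\kappa_2}$. The ellipsoidal coefficient $2+\delta$ and the strict convexity hypothesis \eqref{fg_convex} are used to make the conormal derivative strictly positive, and the condition $\gamma < \tfrac{p-n-1-\delta}{p-1}$ is exactly what makes the divergence term positive in the narrow region. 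The comparison principle then gives $|u|\le Cv$ directly, with the precise exponent, and Theorem \ref{thm-1/2} finishes. This barrier approach encodes the fundamental-solution scaling $|x|^{(p-d)/(p-1)}$ (in the right effective dimension) from the outset, which is what your Morrey-based scheme fundamentally cannot reproduce.
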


Furthermore,  we show that when $n=2$, the blow-up exponents $-1/2$ for $p\leq 3$ and $-1/(p-1)$ for $p>3$ obtained in Theorems \ref{thm-1/2} and \ref{thm:2d} are critical in the sense that
\begin{theorem}\label{thm:2d:2}
For $n =2$, $ p>1$, $\varepsilon\in(0,1)$, let $\Omega = B_5$, and $\cD_1,\cD_2$ be the unit balls center at $(0,1+\varepsilon/2)$ and $(0,-1-\varepsilon/2)$, respectively. Let $\varphi = x_1$ and $u \in W^{1,p}(\widetilde \Omega)$ be the solution of \eqref{equzero}. Then for any $\delta> 0$, there exists a positive constant $C$ depending only on $p$ and $\delta$, such that when $ p \in(1, 3]$,
$$
\| D u \|_{L^\infty(\widetilde \Omega \cap B_{8\sqrt{\varepsilon/\delta}})} \ge \frac{1}{C} \varepsilon^{\frac{-1 + \delta}{2}},
$$
and when $p > 3$,
$$
\| D u \|_{L^\infty(\widetilde \Omega \cap B_{8\sqrt{\varepsilon/\delta}})} \ge \frac{1}{C} \varepsilon^{\frac{-1 + \delta}{p-1}}.
$$
\end{theorem}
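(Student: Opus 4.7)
The argument rests on a mirror symmetry particular to the setup, a conserved flux through neck cross-sections, and a quantitative outer lower bound on $u$ obtained by compactness. Since $\mathcal{D}_1$ and $\mathcal{D}_2$ are each invariant under $x_1\to -x_1$ and together invariant under $x_2\to -x_2$, and since $\varphi = x_1$ is odd in $x_1$ and even in $x_2$, uniqueness of the weak solution of \eqref{equzero} forces
\[
u(-x_1, x_2) = -u(x_1, x_2), \qquad u(x_1, -x_2) = u(x_1, x_2).
\]
In particular $u \equiv 0$ on the axis $\{x_1 = 0\}$, and $u > 0$ strictly on $\widetilde{\Omega} \cap \{x_1 > 0\}$ by the strong maximum principle for the $p$-Laplace equation.

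The central quantity is the flux
\[
Q := \int_{\gamma(x_1)} |Du|^{p-2} u_{x_1}(x_1, x_2)\, dx_2, \qquad \gamma(x_1) := \bigl\{x_2 : -\tfrac{\varepsilon}{2} + h_2(x_1) < x_2 < \tfrac{\varepsilon}{2} + h_1(x_1)\bigr\}.
\]
Divergence-freeness of $|Du|^{p-2}Du$ combined with the Neumann condition on $\Gamma_\pm$ implies, by the divergence theorem applied to a neck strip, that $Q$ is independent of $x_1\in(-1,1)$; the odd symmetry further yields $Q > 0$. The trivial bound $|Q|\leq \|Du\|_{L^\infty(\gamma(0))}^{p-1}|\gamma(0)|$, together with $|\gamma(0)|\approx\varepsilon$ and $\gamma(0)\subset B_{8\sqrt{\varepsilon/\delta}}$, then gives
\[
\|Du\|_{L^\infty(\widetilde{\Omega}\cap B_{8\sqrt{\varepsilon/\delta}})}^{p-1} \;\gtrsim\; \frac{Q}{\varepsilon}.
\]
It therefore suffices to establish $Q\gtrsim \varepsilon^{(3-p)/2}$ when $p\in(1,3)$ and $Q\gtrsim 1$ when $p>3$, up to the prescribed $\delta$-loss.

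These lower bounds on $Q$ would follow by combining an outer matching with an inner one-dimensional reduction. The uniform $C^{1,\alpha}$ estimate away from $\Omega_{1/2}$ yields precompactness of $\{u_\varepsilon\}$ as $\varepsilon\to 0$; any subsequential limit $u_0$ is $p$-harmonic on $B_5\setminus \overline{\mathcal{D}_1^0 \cup \mathcal{D}_2^0}$ (the degenerate geometry with the inclusions touching at the origin), satisfies the Neumann condition on $\partial \mathcal{D}_i^0$, is odd in $x_1$, and equals $\varphi$ on $\partial B_5$. The strong maximum principle then produces $R_0\in(0,1)$ and $c_0>0$ such that $u_\varepsilon(R_0,0)\geq c_0$ uniformly in $\varepsilon$. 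Inside the neck, the thinness $|\gamma(x_1)|\approx \varepsilon+x_1^2$ together with Neumann boundary data forces $u(x_1,x_2)\approx \bar u(x_1)$, and the definition of $Q$ gives the effective ODE $\bar u'(x_1)\approx (Q/(\varepsilon+x_1^2))^{1/(p-1)}$. Integrating along the midline from $-R_0$ to $R_0$,
\[
2u_\varepsilon(R_0,0)\;\approx\; Q^{1/(p-1)}\int_{-R_0}^{R_0}(\varepsilon+s^2)^{-1/(p-1)}\,ds,
\]
and the integral is $O(1)$ for $p>3$, $O(|\log\varepsilon|)$ for $p=3$, and $O(\varepsilon^{1/2-1/(p-1)})$ for $p\in(1,3)$; coupling with $u_\varepsilon(R_0,0)\geq c_0$ gives the required bounds on $Q$ and hence the blow-up rates $\varepsilon^{-1/(p-1)}$ and $\varepsilon^{-1/2}$.

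The main technical obstacle will be converting the heuristic $u\approx \bar u$ into a quantitative estimate. I would carry this out by testing \eqref{main_problem_narrow} against a suitable longitudinal cut-off function depending only on $x_1$, and controlling the residual using the pointwise gradient upper bound \eqref{gradient-1/2} of Theorem \ref{thm-1/2} in the region where $x_1$ is moderately large. The $\delta$-loss in the exponent, and the choice of localization radius $8\sqrt{\varepsilon/\delta}$, reflect the range of $x_1$ over which the 1D approximation stays sharp enough: allowing $R_0/\sqrt{\varepsilon}\to \infty$ slowly as $\delta\searrow 0$ recovers exponents arbitrarily close to the sharp values $-1/2$ and $-1/(p-1)$.
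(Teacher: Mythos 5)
Your route is genuinely different from the paper's. The paper constructs an explicit barrier $w(x)=\bigl[(x_1^2+(2-\delta)x_2^2)^{\gamma/2}-(4\sqrt{\varepsilon/\delta})^{\gamma}\bigr]_+$ which Lemma \ref{subsolution_lemma} shows is a subsolution with $\partial w/\partial\nu\le 0$ on $\Gamma_\pm$; the comparison principle on $\Omega_{r_0}\cap\{x_1>0\}$ (against the boundary data $u=0$ on $\{x_1=0\}$, $u\ge c$ on $\{x_1=r_0\}$ by symmetry and the maximum principle) then gives $u(8\sqrt{\varepsilon/\delta},0)\gtrsim\varepsilon^{\gamma/2}$, and the mean value theorem along the segment from the origin produces the gradient lower bound. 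You instead use the symmetry, the conserved flux $Q=\int_{\gamma(x_1)}|Du|^{p-2}u_{x_1}\,dx_2$, and the elementary bound $\|Du\|_{L^\infty(\gamma(0))}^{p-1}\gtrsim Q/\varepsilon$, which is a clean reduction; the flux-conservation and symmetry steps are correct. In principle your method would even yield exponents with no $\delta$-loss, which is why the hard work cannot be avoided.

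The genuine gap is exactly where you flag it: you never actually prove $Q\gtrsim\varepsilon^{(3-p)/2}$ (resp.\ $Q\gtrsim 1$). The ``effective ODE'' $\bar u'\approx (Q/(\varepsilon+x_1^2))^{1/(p-1)}$ is the heart of the matter and is stated only heuristically, with a plan (``test against a longitudinal cut-off, control the residual via Theorem \ref{thm-1/2}'') but no execution. The difficulty is real: the cross-sectional average $\bar u$ need not satisfy a closed ODE, because the $p$-Laplacian is nonlinear and $|Du|$ mixes $u_{x_1}$ and $u_{x_2}$; one must show the transverse gradient is negligible relative to the longitudinal one in a quantitative, $\varepsilon$-uniform way, and the pointwise upper bound \eqref{gradient-1/2} alone does not give that (it bounds $|Du|$, not the ratio of its components). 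Likewise the compactness step asserting $u_\varepsilon(R_0,0)\ge c_0$ uniformly needs care near the cusp of the limit domain, though that is the lesser issue. As written, the proposal is an outline that identifies the correct mechanism but leaves the key quantitative estimate unproven; the paper's explicit-subsolution argument bypasses all of this and is complete.
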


Finally, we also establish a blow-up rate of $\varepsilon^{-1/2+\beta}$ for the gradient, for any $p>1$ and sufficiently large $n$, with more explicit constant $\beta\in[0,1/2)$. For this, we impose a further assumption that $h_1$ and $h_2$ are $C^{2,\text{Dini}}$ strictly convex and strictly concave respectively, satisfying \eqref{fg_convex} for some positive constants $\kappa_1$ and $\kappa_2$.

\begin{theorem}\label{thm:bern}
Let $h_1$, $h_2$ be $C^{2,\text{Dini}}$ functions satisfying \eqref{fg_0} and \eqref{fg_convex}, $p > 1$, $ \beta\in[0,1/2)$, $ \varepsilon \in(0,1)$, and $u \in W^{1,p}(\Omega_{1})$ be a solution of \eqref{main_problem_narrow}. If $n$, $p$, and $\beta$ satisfy either
\begin{equation}\label{np_relation_1}
p \ge 2, \quad n \ge \frac{5(p-1)}{2} \left( \frac{p+1 - 2\beta(p-1)}{2} + \frac{ \kappa_2}{(1-2\beta)\kappa_1}  \right) + 1,
\end{equation}
or
\begin{equation}\label{np_relation_2}
1 < p < 2, \quad n \ge \frac{5}{2} \left( \frac{3 - 2\beta}{2} + \frac{ \kappa_2}{(1-2\beta)\kappa_1}  \right) + 3-p,
\end{equation}
then there exist a positive constant $C$ depending only on $n$, $p$, $\beta$, $\kappa_1$, and $\kappa_2$, such that
\begin{equation}\label{gradient_-1/2_improved}
|D u(x)| \le C \|u\|_{L^\infty(\Omega_{1})} (\varepsilon + |x'|^2)^{-\frac{1}{2}+\beta } \quad \mbox{for}~~x \in \Omega_{1/2}.
\end{equation}
\end{theorem}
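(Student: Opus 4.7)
My plan is to prove Theorem \ref{thm:bern} by a Bernstein-type maximum-principle argument applied to an auxiliary function built from $|Du|^2$ and a weight that encodes the desired blow-up rate $(\varepsilon+|x'|^2)^{-1/2+\beta}$. I would take $W(x) := \Phi(|Du(x)|^2)\,\psi(x)$ in $\Omega_1$, with $\Phi(t) = t^{q}$ for a parameter $q = q(p,\beta) > 0$ and $\psi$ a weight proportional to $(\varepsilon + |x'|^2)^{\gamma}\chi(x')$ with $\gamma = q(1-2\beta)$, chosen so that boundedness of $W$ is equivalent to the estimate \eqref{gradient_-1/2_improved}, and $\chi$ a smooth cutoff supported in $\{|x'| < 1\}$. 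The degeneracy of the $p$-Laplacian at $\{Du = 0\}$ will be handled by replacing $|Du|^2$ with $|Du|^2 + \tau$ throughout and passing to the limit $\tau \to 0^+$ at the end.

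The interior step is the standard Bernstein computation. Let $L\varphi := \partial_j\bigl((|Du|^2+\tau)^{(p-2)/2}(\delta_{jk}+(p-2)\partial_j u\,\partial_k u/(|Du|^2+\tau))\partial_k\varphi\bigr)$ denote the linearization of the regularized $p$-Laplacian. Differentiating the equation in $x_k$ and contracting with $Du$ yields a Bochner-type identity $L(|Du|^2) \ge 2(|Du|^2+\tau)^{(p-2)/2}\|D^2u\|_A^2$, where $\|\cdot\|_A$ denotes the norm with respect to the ellipticity matrix $A$. Applying $L$ to $W$ and absorbing the cross term $2\Phi'\,\nabla|Du|^2 \cdot A\nabla\psi$ by Cauchy--Schwarz produces a pointwise inequality of the form $LW \ge c(n,p,\beta,\kappa_1,\kappa_2)\,\Phi'\,(|Du|^2+\tau)^{(p-2)/2}|D^2u|^2\,\psi - E$, where $E$ collects harmless lower-order terms bounded in terms of $\|u\|_{L^\infty(\Omega_1)}$. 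On the Neumann boundary $\Gamma_\pm$, differentiating $\partial_\nu u = 0$ tangentially gives $\partial_\nu|Du|^2 = -2\,\mathrm{II}(Du, Du)$, so the strict convexity/concavity assumption \eqref{fg_convex} makes this nonpositive; combined with the sign of $\partial_\nu\psi$, this yields $\partial_\nu W \le 0$ on $\Gamma_\pm$, ruling out boundary maxima by Hopf's lemma. The maximum of $W$ is then pushed to $\{|x'| = 1\}$, where interior $C^{1,\alpha}$ estimates hold uniformly in $\varepsilon$ and bound $W$ by a constant times $\|u\|_{L^\infty(\Omega_1)}^{2q}$.

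The hard part, and the source of the dimensional restrictions \eqref{np_relation_1}--\eqref{np_relation_2}, is verifying that the coefficient $c(n,p,\beta,\kappa_1,\kappa_2)$ in the interior inequality is nonnegative. In the expansion of $LW$, the positive contribution is a trace of $n-1$ squared tangential second-derivative terms from the Bochner identity, scaling linearly in $n$. The negative contribution collects three pieces: one of order $(p+1-2\beta(p-1))/2$ (respectively $(3-2\beta)/2$ when $1 < p < 2$) from the choice of exponents $q$ and $\gamma$ together with the weight derivatives; one proportional to $\kappa_2/((1-2\beta)\kappa_1)$ arising from the Cauchy--Schwarz used to absorb the boundary curvature anisotropy, which blows up as $\beta \nearrow 1/2$; and an overall factor $p-1$ (respectively $1$, with an additive correction $3-p$) reflecting the ellipticity ratio of $A$ in the two regimes. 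Requiring the trace to dominate the sum produces exactly \eqref{np_relation_1} for $p \ge 2$ and \eqref{np_relation_2} for $1 < p < 2$; in the limit $n \to \infty$ this allows $\beta \nearrow 1/2$, consistent with the eigenvalue-driven picture of the linear case in \cite{DLY, DLY2}. Once the pointwise Bernstein inequality is established, the $\tau \to 0^+$ passage is routine by standard approximation.
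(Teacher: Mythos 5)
Your overall strategy—a Bernstein argument applied to a weighted power of $|Du|$, using that $|Du|^q$ is a subsolution of the linearized equation, with a Hopf-type boundary argument on $\Gamma_\pm$—is indeed the paper's strategy. But there is a genuine gap in your choice of weight $\psi$, and it is not a minor one: your argument fails precisely at the boundary step, and fixing it is what actually produces the $\kappa_2/((1-2\beta)\kappa_1)$ term in the constraints \eqref{np_relation_1}--\eqref{np_relation_2}.

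You take $\psi \propto (\varepsilon+|x'|^2)^{\gamma}$, depending only on $x'$. The paper instead uses the weight $Q = \tfrac{\varepsilon}{\kappa_1} + |x'|^2 - \tfrac{5\kappa_2}{2(1-\gamma)\kappa_1} x_n^2$ (with $\gamma = 2\beta$), and then $F = Q^{(p-p\gamma)/2}|Du|^p$ for $p\ge 2$, $G = Q^{1-\gamma}|Du|^2$ for $1<p<2$. The negative $x_n^2$ term in $Q$ is essential. On $\Gamma_+$, after rotating so $D_{x'}h_1(x_0')=0$, the Neumann condition $D_\nu u=0$ gives $D_n u = 0$ and $D_\nu|Du|^s = s|Du|^{s-2}\sum_{i,j<n} D_iu\,D_ju\,D_{ij}h_1$, so by \eqref{fg_convex} this is bounded \emph{below} by $s\kappa_1|Du|^s > 0$ (taking $\nu$ outward as the paper does); it has a definite, nonzero sign that does \emph{not} vanish at the neck. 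Your weight can only contribute $D_\nu\psi/\psi \sim \gamma|x'|^2/(\varepsilon+|x'|^2)$, which tends to $0$ as $|x'|\to 0$, so near the neck the boundary derivative of $W=\Phi(|Du|^2)\psi$ is dominated by the $|Du|^2$ term and has the wrong sign: the maximum of $W$ can sit on $\Gamma_\pm$ and no contradiction is obtained. (The same obstruction appears with the opposite normal convention; the signs flip but the conclusion is the same.) In the paper, the $x_n^2$ term in $Q$ yields, on $\Gamma_+$, $D_\nu Q \le -\tfrac{10\kappa_2}{(1-\gamma)\kappa_1}\big(\varepsilon/2 + h_1\big)/\sqrt{1+|Dh_1|^2}$, which is proportional to $Q$ itself even at $x'=0$, giving $D_\nu Q/Q \approx -5\kappa_2/(1-\gamma)$. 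This beats $D_\nu|Du|^p/|Du|^p \le p\kappa_2$, and the factor $1/(1-\gamma) = 1/(1-2\beta)$ explains the blow-up of the required $n$ as $\beta\nearrow 1/2$.

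Two secondary points. First, your claim that $\partial_\nu|Du|^2 = -2\,\mathrm{II}(Du,Du)$ is nonpositive and combines with $\partial_\nu\psi$ to give $\partial_\nu W\le 0$ is misleading: even under the inward-normal convention where the first term is indeed $\le 0$, the second term is $\ge 0$ and too small near the neck, so the conclusion $\partial_\nu W \le 0$ is true but useless—Hopf then yields no contradiction. Second, the paper works with the nondivergence normalized operator $a^{ij}D_{ij}$ of \eqref{a_ij} and derives the interior inequality via the identity \eqref{identity_1} and the splitting \eqref{3-4}/\eqref{4-4}, not via the divergence-form linearization $L$; this is a structurally similar calculation, but the exact exponent choice and the way the extreme-point condition $D_iF(x_0)=0$ is substituted to kill mixed terms is what makes the dimensional count close. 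If you repair the weight to include the $x_n^2$ correction, the rest of your outline (interior Bochner-type inequality, reduction to $\{|x'|=r_0\}$, and the $\tau\to 0^+$ regularization) lines up with the paper.
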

\begin{remark}
By \eqref{np_relation_1} and \eqref{np_relation_2}, when $n \to \infty$, $\beta$ can be chosen arbitrarily close to $1/2$. In view of \eqref{gradient_-1/2_improved}, the singularity of $Du$ diminishes as the dimension $n$ increases. We also note that by refining the inequalities in the proof of Theorem \ref{thm:bern} further, it is possible to improve the lower bounds of $n$ in both \eqref{np_relation_1} and \eqref{np_relation_2}. However, we have decided not to pursue this in the current paper.
\end{remark}

The rest of the paper is organized as follows. In the next section, we give the proof of Theorem \ref{thm-1/2} using mean oscillation estimates. In Section \ref{sec3}, we demonstrate the proof of Theorem \ref{Theorem_improved} by utilizing a delicate change of variables, an extension argument, and the Krylov-Safonov Harnack inequality. Sections \ref{sec4} and \ref{sec5} are devoted to the proofs of Theorems \ref{thm:2d} and \ref{thm:2d:2}, respectively, for which we construct suitable sub- and super-solutions. In Section \ref{sec6}, we employ a Bernstein type argument to prove Theorem \ref{thm:bern}. Here we use the fact that for  any $q\geq p$, $|Du|^q$ is a subsolution to the normalized $p$-Laplace equation, as originally observed by Uhlenbeck \cite{MR474389}. Finally, we provide an alternative proof of the gradient estimates of order $\varepsilon^{-1/2}$ in the Appendix by also using the Bernstein type argument.

\section{Mean oscillation estimates}
In this section, we give the proof of Theorem \ref{thm-1/2} using mean oscillation estimates. We fix a point $x_0\in\Omega_{1/2}$ and prove \eqref{gradient-1/2} at $x=x_0$. Note that we can always assume $\varepsilon+|x_0'|^2\leq c$, where $c=c(n,p,c_1,c_2)>0$ could be any sufficiently small constant depending only on $n$, $p$, $c_1$, and $c_2$. Otherwise,  by classical estimates (see \cites{Lie,Vaz}), \eqref{gradient-1/2} directly follows.  Next, we derive some mean oscillation estimates of $Du$ on a ball $B_r(x_0)$ for different radii $r$.

\subsection{Mean oscillation estimates for small \texorpdfstring{$r$}{r}}

We recall a classical interior mean oscillation estimate when $B_{r}(x_0)\subset{\Omega}_1$. Estimates of this type, with different exponents involved, were developed in \cites{lieberman1991natural, dibenedetto1993higher, duzaar2010gradient}.
\begin{lemma}\label{lem:mean1}
Let $u\in W^{1,p}({\Omega}_1)$ be a solution to \eqref{main_problem_narrow}. There exist constants $C>1$ and $\alpha\in(0,1)$ depending only on $n$ and $p$, such that $u\in C^{1,\alpha}({\Omega}_1)$ and for every $B_{r}(x_0)\subset{\Omega}_1$ and $\rho\in (0,r]$, we have
\begin{align*}
    \left(\fint_{B_\rho(x_0)}|D u-(D u)_{B_\rho(x_0)}|^p \right)^{\frac{1}{p}}\leq C\left(\frac{\rho}{r}\right)^{\alpha } \left(\fint_{B_r(x_0)}|D u-(D u)_{B_r(x_0)}|^p \right)^\frac{1}{p}.
\end{align*}
\end{lemma}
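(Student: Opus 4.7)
The plan is to view the estimate as an interior Morrey--Campanato type decay for the gradient of the $p$-harmonic function $u$ on a ball $B_r(x_0)\subset\Omega_1$, where the Neumann conditions on $\Gamma_\pm$ play no role because the ball is strictly interior to $\widetilde\Omega$. Both $p$-harmonicity and the excess $\bigl(\fint_{B_\rho}|Du-(Du)_{B_\rho}|^p\bigr)^{1/p}$ transform covariantly under the rescaling $v(y)=\lambda u(x_0+ry)$, so I would first reduce to the unit-ball case and aim to prove that every $p$-harmonic $v$ on $B_1$ satisfies
$$\Phi(\rho)\le C\rho^{\alpha}\Phi(1),\qquad \Phi(r):=\left(\fint_{B_r}|Dv-(Dv)_{B_r}|^p\right)^{\frac{1}{p}},\quad \rho\in(0,1/2].$$

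The heart of the argument is a one-step excess decay $\Phi(\theta)\le\gamma\Phi(1)$ for some $\theta\in(0,1/2)$ and $\gamma\in(0,1)$ depending only on $n,p$; dyadic iteration and interpolation then give the claim with $\alpha=\log(1/\gamma)/\log(1/\theta)$. To obtain the one-step decay, I would dichotomize on the size of $\xi_0:=(Dv)_{B_1}$ relative to $\Phi(1)$. In the \emph{nondegenerate} regime $|\xi_0|\gg\Phi(1)$, the linearization of $\xi\mapsto|\xi|^{p-2}\xi$ at $\xi_0$ is uniformly elliptic with constants comparable to $|\xi_0|^{p-2}$, and a harmonic approximation---comparing $v$ with the solution $w$ of the frozen linear equation sharing the same boundary data on $B_{1/2}$---imports the classical Campanato $O(\rho^2)$ decay for $w$, while a Caccioppoli-type bound absorbs $\|Dv-Dw\|_{L^p}$. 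In the \emph{degenerate} regime $|\xi_0|\lesssim\Phi(1)$, the excess dominates $\|Dv\|_{L^p(B_1)}$, so the interior $C^{1,\alpha}$ regularity of Uhlenbeck--DiBenedetto--Lewis--Tolksdorf supplies a pointwise Hölder bound on $Dv$ which, via Campanato's characterization of Hölder spaces, translates directly into the desired decay of $\Phi$.

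The principal obstacle is the degeneracy of the $p$-Laplacian at points where $Dv=0$: the linearization is not uniformly elliptic there, and the ellipticity constants in the nondegenerate regime scale like $|\xi_0|^{p-2}$, which can vanish or blow up. Producing one-step decay constants $\theta$ and $\gamma$ that are \emph{independent} of $\xi_0$ across the two regimes is precisely what the natural-growth framework of Lieberman and the excess-decay iteration schemes of DiBenedetto and Duzaar--Mingione referenced in the statement are designed to handle, and I would follow that machinery directly to conclude.
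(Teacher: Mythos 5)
The paper does not give a proof of this lemma: it is recalled as a classical interior excess-decay estimate and attributed to Lieberman, DiBenedetto--Manfredi, and Duzaar--Mingione, so there is no ``paper's own proof'' to compare against. Your sketch follows the standard machinery those references use --- rescale to the unit ball, establish a one-step excess decay $\Phi(\theta)\le\gamma\Phi(1)$ by dichotomizing on $|(Dv)_{B_1}|$ versus $\Phi(1)$, use a frozen-coefficient linear comparison in the nondegenerate regime and interior $C^{1,\alpha}$ regularity in the degenerate regime, then iterate --- and the outline is correct. A few small imprecisions worth flagging: for the constant-coefficient linear comparison function $w$, the gradient excess decays like $O(\rho)$ (indeed like $\rho$ for $Dw$ Lipschitz), not $O(\rho^2)$, though any positive power suffices for the iteration; the bound on $\|Dv-Dw\|_{L^p}$ that you invoke is a \emph{comparison} estimate obtained by testing the difference of the two equations with $v-w$, not a Caccioppoli inequality; and in the degenerate regime the chain is $\Phi(\rho)\lesssim\rho^\alpha[Dv]_{C^\alpha(B_{1/2})}\lesssim\rho^\alpha\|Dv\|_{L^p(B_1)}\lesssim\rho^\alpha\Phi(1)$, where the last inequality uses $|(Dv)_{B_1}|\lesssim\Phi(1)$ and the triangle inequality. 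None of these affect the validity of the argument; with those adjustments your sketch matches the approach in the cited sources.
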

We denote $$\phi(x_0,r)=\left(\fint_{B_r(x_0)}|D u-(D u)_{B_r(x_0)}|^p \right)^{\frac{1}{p}}.$$
Then Lemma \ref{lem:mean1} also implies
\begin{corollary}\label{cor:mean1}
Under the assumptions of Lemma \ref{lem:mean1}, there exists a constant  $\mu_1\in(0,1)$ depending only on $n$ and $p$,  such that for any $\mu\in(0,\mu_1]$, $B_r(x_0)\subset{\Omega}_1$, and $K\in\mathbb{N}$, it holds that
\begin{equation}\label{eq:holder1}
    \sum_{k=0}^{K+1} \phi(x_0, \mu^k r)\leq 2\,\phi(x_0, r).
\end{equation}
\end{corollary}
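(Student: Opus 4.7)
The plan is to apply Lemma \ref{lem:mean1} to the radii $\rho = \mu^k r$ for each $k \geq 1$ and then sum the resulting geometric series. Since $\mu \in (0,1)$, we have $\mu^k r \in (0, r]$ and $B_{\mu^k r}(x_0) \subset B_r(x_0) \subset \Omega_1$, so the lemma directly yields
\begin{equation*}
\phi(x_0, \mu^k r) \leq C \mu^{k\alpha}\, \phi(x_0, r) \quad \text{for every } k \geq 1,
\end{equation*}
where $C > 1$ and $\alpha \in (0,1)$ are the constants from Lemma \ref{lem:mean1}, depending only on $n$ and $p$.

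Next, I would separate out the $k = 0$ term (which equals $\phi(x_0, r)$ itself) and bound the tail by the full geometric series:
\begin{equation*}
\sum_{k=0}^{K+1} \phi(x_0, \mu^k r) \leq \phi(x_0, r) + C\, \phi(x_0, r) \sum_{k=1}^{\infty} \mu^{k\alpha} = \phi(x_0, r)\left(1 + \frac{C \mu^\alpha}{1 - \mu^\alpha}\right).
\end{equation*}
Since the bound is uniform in $K$, it suffices to choose $\mu_1 \in (0,1)$, depending only on $n$ and $p$ through $C$ and $\alpha$, so that $C \mu_1^\alpha/(1 - \mu_1^\alpha) \leq 1$; for instance, $\mu_1^\alpha = 1/(C+1)$ works. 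Then for any $\mu \in (0, \mu_1]$ the parenthesized factor is at most $2$, which gives \eqref{eq:holder1}.

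There is no genuine obstacle in this step: the corollary is a routine geometric-series consequence of the algebraic Campanato-type decay from Lemma \ref{lem:mean1}, and the choice of $\mu_1$ is explicit in terms of $C$ and $\alpha$. The only point worth a moment's care is to treat the $k=0$ term as the starting value rather than trying to bound it by the lemma with a factor of $C$, which would be wasteful.
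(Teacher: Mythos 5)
Your proof is correct and is essentially the same as the paper's: both exploit the geometric decay furnished by Lemma \ref{lem:mean1} and then sum a geometric series, the only cosmetic difference being that the paper fixes $\mu_1$ by $C\mu_1^\alpha = 1/2$ and iterates the one-step contraction $\phi(x_0,\mu^{k+1}r)\le \tfrac12\phi(x_0,\mu^k r)$, whereas you apply the lemma directly from $r$ to each $\mu^k r$ and bound the tail by $\sum_{k\ge1}C\mu^{k\alpha}$. Your choice $\mu_1^\alpha = 1/(C+1)$ makes the parenthesized factor exactly $2$, so the constant matches as well.
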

\begin{proof}
We take $\mu_1\in(0,1)$ such that $C\mu_1^\alpha= \frac{1}{2}$, where $C$ and $\alpha$ are the same constants as in Lemma \ref{lem:mean1}. Replacing $r$ with $\mu^k r$ and setting $\rho=\mu^{k+1}r$, we get
$$
\phi(x_0,\mu^{k+1} r)\leq \frac{1}{2} \phi(x_0,\mu^k r).
$$
Summing the above inequality over $k={0,1,\dots,K}$, we obtain \eqref{eq:holder1}.
\end{proof}

\subsection{Mean oscillation estimates for intermediate \texorpdfstring{$r$}{r}}
Next, we consider the case when $B_{r}(x_0)$ intersects with only one of $\Gamma_+$ and $\Gamma_-$. In this case, we choose $\hat{x}_0\in \Gamma_+\cup\Gamma_-$ such that $\text{dist}(x_0,\Gamma_+\cup\Gamma_-)=|\hat{x}_0-x_0|$ and we derive mean oscillation estimates around $\hat{x}_0$. Note that we can assume
$\varepsilon+c_2|x_0'|^2\leq 1/4$ and thus by \eqref{def:c_2} and the triangle inequality, $|\hat{x}_0'|\leq 3/4$.
Without loss of generality, we assume $\hat{x}_0\in \Gamma_-$. Then by \eqref{fg_1} and \eqref{def:c_2}, there exists a constant $c=c(n,c_1,c_2)\in(0,1/4)$, such that $B(\hat{x}_0,r)\cap \Gamma_+=\emptyset$ for any $r\in(0,c(\varepsilon+|\hat{x}_0'|^2))$.

We first choose a coordinate $y=(y',y_n)$ such that $y(\hat{x}_0)=0$, the direction of axis $y_n$ is the upper normal vector at $\hat{x}_0\in\Gamma_-$, and $\Omega_{R_0}(\hat{x}_0)=\{y\in B_{R_0}:y_n>\chi(y')\}$, where $R_0=c_3(\varepsilon+|\hat{x}_0'|^2)\in(0,1/4)$ for some constant $c_3=c_3(n,c_1,c_2)\in(0,1/8)$ and
$\chi:\{y'\in\mathbb{R}^{n-1}:|y'|<R_0\}\rightarrow\mathbb{R}$ is a $C^{1,1}$ function in the coordinate system depending on $\hat{x}_0$ such that
\begin{equation}\label{def:chi}
\chi(0')=0,\quad D_{y'}\chi(0^{\prime})=0,\quad \|\chi\|_{C^{1,1}}\leq C \|h_2\|_{C^{1,1}}.
\end{equation}
Then we let
$$
z=\Lambda(y) =(y',\,y_n-\chi(y')).
$$
Since $\Gamma_-$ is $C^{1,1}$, by \eqref{def:chi} there exist constants $C=C(n,c_1,c_2)$, $c_4=c_4(n,c_1,c_2)\in(0,c_3)\subset(0,1/8)$, and $R_1=c_4(\varepsilon+|\hat{x}_0'|^2)$ such that
\begin{equation}\label{chi}
   |D_{y'}\chi(y')|\leq C\,|y'|\leq 1/2 \quad \text{if} \quad |y'|\leq 2R_1,
\end{equation}
\begin{equation}\label{bdry}
    \Omega_{r/2}(\hat{x}_0)\subset \Lambda^{-1}(B^+_r)\subset \Omega_{2r}(\hat{x}_0) \quad \forall\, r\in(0,2R_1],
\end{equation}
and thus
\begin{equation}\label{co-diff}
   |D\Lambda(y)-I_n|\leq C\,|y'|\leq 1/2 \quad \text{if} \quad |y'|\leq 2R_1,
\end{equation}
Therefore, there exist positive constants $c(n)$ and $c'(n)$ depending only on $n$, such that for any $\hat{x}_0\in (\Gamma_+\cup\Gamma_-)\cap \{x\in\mathbb{R}^n: |x'|\leq 3/4\}$ and $0<r\leq c_4(\varepsilon+|\hat{x}_0'|^2)$,
\begin{equation}\label{volume1}
    c(n) r^n\leq  |\Omega_r(\hat{x}_0)| \leq c'(n)r^n.
\end{equation}
Note that
\begin{align}\label{det=1}
    \text{det}(D\Lambda)\equiv 1.
\end{align}
Then $u_1(z):=u(\Lambda^{-1}(z))$ satisfies the following equation with conormal boundary condition
\begin{equation}\label{eq:u1}
    \left\{
\begin{aligned}
     -\dv_z\left(|A^T D_z u_1|^{p-2}AA^T  D_z u_1\right)=\;&0 \quad  \text{in} \,\, B^+_{R_1},\\
   \left(|A^T D_z u_1|^{p-2}AA^T  D_z u_1\right)_n=\;&0 \quad \text{on}\,\, B_{R_1}\cap\partial \mathbb{R}^n_+,
\end{aligned}
\right.
\end{equation}
where we denote
$$
A:={A}(z):=(a_{ij}(z)):=D{\Lambda}(\Lambda^{-1}(z)) .
$$
Next we extend $u_1$ and the coefficient matrix $A$ to the whole ball $B_{R_1}$.
We take the even extension of $u_1$, $a_{nn}$, and $a_{ij},i,j = 1,2,\ldots, n-1,$ with respect to $z_n = 0$, and take the odd extension of $a_{in}$ and $a_{ni},i = 1,2,\ldots, n-1,$ with respect to $z_n = 0$. We still denote these functions by $u_1$ and ${A}$ after the extension. Because of the conormal boundary condition, $u_1$ satisfies
\begin{equation}\label{eq:u1:ext}
 -\dv_z\big(\mathbf{A}(z,D_z u_1)\big)=\;0 \quad  \text{in} \,\, B_{R_1},
\end{equation}
where the nonlinear operator $\mathbf{A}$ is defined as
$$
\mathbf{A}(z,\xi)=  |A^T\xi|^{p-2} AA^T\xi \quad \quad \text{for} \quad z\in B_{R_1},\,\,\xi\in \mathbb{R}^n.
$$

\begin{lemma}
There exists a constant $C=C(n,p,c_1,c_2)>0$, such that for any $z\in B_{R_1}$ and $\xi\in\mathbb{R}^n$,
\begin{align}\label{co-diff2}
    |\mathbf{A}(z,\xi)-|\xi|^{p-2}\xi|\leq C\,|z'|\,|\xi|^{p-1}.
\end{align}
\end{lemma}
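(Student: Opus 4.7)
The plan is to reduce the estimate to the pointwise bound $|A(z) - I_n| \le C|z'|$ on all of $B_{R_1}$, after which I would expand the nonlinear operator around the identity.

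First, I would verify the bound on $A - I_n$. A direct computation from $\Lambda(y) = (y', y_n - \chi(y'))$ gives $D\Lambda(y) = I_n - e_n \otimes (D_{y'}\chi(y'), 0)$, so for $z_n > 0$ we have $a_{ij}(z) = \delta_{ij}$ for $i \ne n$, $a_{nn}(z) = 1$, $a_{in}(z) = 0$ for $i\le n-1$, and $a_{nj}(z) = -\partial_j \chi(z')$ for $j \le n-1$. Under the prescribed even/odd extension, the entries $a_{ij}$ with $i,j\le n-1$ and $a_{nn}$, being constants in $z_n$, are unchanged; the $a_{in}$ with $i \le n-1$ remain zero (odd extension of the zero function); the $a_{nj}$ with $j\le n-1$ become $\pm \partial_j\chi(z')$ depending on the sign of $z_n$. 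In either case, since $\chi \in C^{1,1}$ with $D_{y'}\chi(0') = 0$ and $\|\chi\|_{C^{1,1}} \le C\|h_2\|_{C^{1,1}}\le Cc_2$, we obtain
$$
|A(z) - I_n|\le C|z'|, \qquad |A(z) A(z)^T - I_n|\le C|z'| \quad \text{for all } z \in B_{R_1},
$$
with $C$ depending only on $n$, $c_1$, $c_2$. After shrinking $c_4$ if necessary, I may assume $C|z'| \le 1/2$ on $B_{R_1}$.

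Next, I would split the difference telescopically:
$$
\mathbf{A}(z,\xi) - |\xi|^{p-2}\xi = \bigl(|A^T \xi|^{p-2} - |\xi|^{p-2}\bigr)A A^T \xi + |\xi|^{p-2}\bigl(A A^T - I_n\bigr) \xi.
$$
The second summand is bounded directly by $|\xi|^{p-2} \cdot C|z'||\xi| = C|z'||\xi|^{p-1}$. For the first, the case $\xi = 0$ is trivial; otherwise $|A^T \xi - \xi| \le C|z'||\xi|$ forces both $|A^T \xi|$ and $|\xi|$ to lie in $[|\xi|/2,\, 3|\xi|/2]$, and the mean value theorem applied to $t \mapsto t^{p-2}$ on this interval gives
$$
\bigl||A^T \xi|^{p-2} - |\xi|^{p-2}\bigr| \le |p-2|\,\theta^{p-3}\,\bigl||A^T\xi| - |\xi|\bigr| \le C|\xi|^{p-3} \cdot C|z'||\xi| = C|z'||\xi|^{p-2},
$$
where $\theta$ lies between $|A^T\xi|$ and $|\xi|$, so $\theta^{p-3}$ is comparable to $|\xi|^{p-3}$ regardless of the sign of $p-3$. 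Combined with $|A A^T \xi| \le C|\xi|$, this bounds the first summand by $C|z'||\xi|^{p-1}$, and adding the two pieces yields \eqref{co-diff2}.

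The only mildly subtle point, rather than a genuine obstacle, is that for $1 < p < 2$ the function $t \mapsto t^{p-2}$ is singular at the origin; this is handled precisely by the comparability $|A^T\xi| \sim |\xi|$, which keeps both quantities away from that singularity.
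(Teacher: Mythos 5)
Your proof is correct and in substance follows the paper's own argument: both reduce everything to the pointwise bound $|A(z)-I_n|\le C|z'|$ (recorded as \eqref{co-diff} in the paper) and then split $\mathbf{A}(z,\xi)-|\xi|^{p-2}\xi$ into two pieces, each absorbing one factor of $|z'|$. The only real difference lies in the choice of split and the tool applied to the piece involving the difference of powers. The paper writes
\[
\mathbf{A}(z,\xi)-|\xi|^{p-2}\xi=|A^T\xi|^{p-2}(A-I_n)A^T\xi+\bigl(|A^T\xi|^{p-2}A^T\xi-|\xi|^{p-2}\xi\bigr)
\]
and controls the second term by the Hamburger-type inequality \eqref{V}. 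You instead telescope so that the scalar difference $|A^T\xi|^{p-2}-|\xi|^{p-2}$ is isolated, and then bound it by a one-dimensional mean value argument, using $|A^T\xi|\sim|\xi|$ to keep the interval of evaluation away from the singularity of $t\mapsto t^{p-2}$ at the origin when $p<2$. Your route is slightly more elementary and self-contained in that it avoids invoking \eqref{V}, but both are one-line bounds once $|A-I_n|\le C|z'|$ is established, so there is no meaningful gain or loss in generality or sharpness. Your verification of $|A(z)-I_n|\le C|z'|$ on all of $B_{R_1}$ from the explicit form of $D\Lambda$ and the even/odd extension convention is also correct and carefully done.
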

\begin{proof}
We recall a well-known inequality (see \cite{MR1179331}*{Lemma 2.1}): for any $p>1$ and $\xi_1,\, \xi_2\in \mathbb{R}^n$, it holds that
\begin{equation}\label{V}
    c^{-1} \big(|\xi_1|^2+|\xi_2|^2)^\frac{p-2}{2}\leq \frac{\big||\xi_2|^{p-2}\xi_2-|\xi_1|^{p-2}\xi_1\big|}{|\xi_2-\xi_1|}\leq c \,\big(|\xi_1|^2+|\xi_2|^2)^\frac{p-2}{2},
\end{equation}
where $c=c(n,p)>1$ is a positive constant.
Using \eqref{V}, \eqref{co-diff}, and the triangle inequality, we obtain
\begin{align*}
    &|\mathbf{A}(z,\xi)-|\xi|^{p-2}\xi|\leq \big||A^T\xi|^{p-2} (A-I_n)A^T\xi\big|+\big|  |A^T\xi|^{p-2} A^T\xi-|\xi|^{p-2}\xi \big|\\
    &\leq |A-I_n|\,|A^T\xi|^{p-1}+c\,\big(|A^T\xi|^{2}+|\xi|^2\big)^\frac{p-2}{2} | A^T\xi-\xi|\leq C\, |z'|\,|\xi|^{p-1}.
\end{align*}
Thus the proof of \eqref{co-diff2} is completed.
\end{proof}
Assume that $r\in(0,R_1]$. We let $v_1\in u_1+W_0^{1,p}(B_r)$ be the unique solution to
\begin{equation}\label{eq:v1}
    \left\{
\begin{aligned}
     -\dv_z(|D_z v_1|^{p-2} D_z v_1) =\;&  0 \quad \;\;\text{in} \,\, B^+_{r},\\
     v_1 =\; &  u_1 \quad \text{on}\,\, \partial B^+_{r}. \\
\end{aligned}
\right.
\end{equation}
By testing  \eqref{eq:v1} and \eqref{eq:u1:ext} with $v_1-u_1$ and using \eqref{co-diff2}, we have the comparison estimate
\begin{align}\label{comp1}
    \fint_{B_r}|D_z u_1-D_z v_1|^p\leq Cr^{\min\{2,p}\}\fint_{B_{r}}|D_z u_1|^p,
\end{align}
where $C>0$ is a constant depending only on $n$, $p$, $c_1$, and $c_2$.
For detailed proof of \eqref{comp1}, see \cite{duzaar2010gradient}*{Eq. (4.35)} when $p\in(1,2)$ and \cite{MR2823872}*{Lemma 3.4} when $p\geq 2$.

Applying Lemma \ref{lem:mean1} and the comparison estimate \eqref{comp1}, we have
\begin{lemma}
Suppose that $u_1\in W^{1,p}(B^+_{R_1})$ is a solution to  \eqref{eq:u1}. Then for any $\mu\in(0,1)$ and $r\in (0,R_1]$, we have
\begin{equation}\label{ineq:pr}
\begin{aligned}
&\left(\fint_{B^+_{ \mu r}}|D_{z'} u_1-(D_{z'} u_1)_{B^+_{ \mu r}}|^{p}+|D_{z_n}u_1|^{p}\right)^{1/p}\\
&\leq C\mu^{\alpha}\left(\fint_{B^+_{ r}}|D_{z'} u_1-(D_{z'} u_1)_{B^+_{ r}}|^{p}+|D_{z_n}u_1|^{p}\right)^{1/p}+C_\mu r^{\theta_p}\left(\fint_{B^+_{r}}|D_z u_1|^{p}\right)^{{1}/{p}},
\end{aligned}
\end{equation}
where $\theta_p=\min\{1,2/p\}$, $\alpha$ is the same constant as in Lemma \ref{lem:mean1}, $C_\mu$ is a constant depending on $\mu$, $n$, $p$, $c_1$, $c_2$ and $C$ is a constant depending on $n$, $p$, $c_1$, $c_2$.
\end{lemma}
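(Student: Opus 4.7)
The plan is to produce a reference solution via the standard $p$-Laplace problem on the half-ball, extend it by even reflection to a genuine $p$-harmonic function on the full ball $B_r$, apply the classical interior mean oscillation estimate Lemma \ref{lem:mean1} to it, and then absorb the discrepancy between $u_1$ and $v_1$ using the comparison estimate \eqref{comp1}.

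First I would exploit the symmetry of $v_1$. Although \eqref{eq:v1} is written as a Dirichlet problem on $B^+_r$, the variational formulation (together with the fact that the even reflection of $u_1$ solves the reflected version of \eqref{eq:u1:ext} on $B_r$ by construction) shows that $v_1$ satisfies the natural homogeneous conormal condition on $B_r\cap\{z_n=0\}$, equivalently that $v_1$ itself is even in $z_n$. Consequently $D_{z'}v_1$ is even and $D_{z_n}v_1$ is odd in $z_n$, from which one reads off $(D_{z_n}v_1)_{B_{\mu r}}=0$, $(D_{z'}v_1)_{B_{\mu r}}=(D_{z'}v_1)_{B^+_{\mu r}}$, and, for any integrand $\Phi$ even in $z_n$, $\fint_{B^+_{\mu r}}\Phi=\fint_{B_{\mu r}}\Phi$. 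Combining these with the elementary equivalence $c^{-1}(|a|^p+|b|^p)\le(|a|^2+|b|^2)^{p/2}\le c(|a|^p+|b|^p)$ yields
$$
\fint_{B^+_{\mu r}}\!\bigl(|D_{z'}v_1-(D_{z'}v_1)_{B^+_{\mu r}}|^p+|D_{z_n}v_1|^p\bigr)\;\sim_{n,p}\;\fint_{B_{\mu r}}|Dv_1-(Dv_1)_{B_{\mu r}}|^p.
$$
Since $v_1$ is $p$-harmonic on $B_r$ after the even extension, Lemma \ref{lem:mean1} applied on $B_r$ gives an $\mu^{\alpha}$-decay of the right-hand side in terms of the same quantity at scale $r$.

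Next I would transfer the estimate from $v_1$ back to $u_1$ via $Du_1=Dv_1+D(u_1-v_1)$. Minkowski's inequality, applied to both the de-meaned tangential component and the normal component, bounds the left-hand side of \eqref{ineq:pr} by the analogous quantity for $v_1$ plus a term of the form $C(\fint_{B^+_{\mu r}}|D(u_1-v_1)|^p)^{1/p}$; running the same decomposition on the $v_1$-side at scale $r$ and using Minkowski once more lets one replace $v_1$ by $u_1$ in the $\mu^\alpha$ term, again with an error controlled by $(\fint_{B^+_r}|D(u_1-v_1)|^p)^{1/p}$. The error terms are handled via the crude rescaling $\fint_{B^+_{\mu r}}\le\mu^{-n}\fint_{B^+_r}$ together with \eqref{comp1}, giving
$$
\left(\fint_{B^+_{\mu r}}|D(u_1-v_1)|^p\right)^{1/p}\le C\mu^{-n/p}r^{\theta_p}\left(\fint_{B^+_r}|D_z u_1|^p\right)^{1/p},
$$
where $\theta_p=\min\{1,2/p\}$ arises from $\min\{2,p\}/p$. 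Setting $C_\mu=C\mu^{-n/p}$ yields \eqref{ineq:pr}.

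The main subtlety I anticipate is the reflection step: one must verify that the even extension of $v_1$ solves the unperturbed $p$-Laplace equation on the full ball $B_r$ (so that Lemma \ref{lem:mean1} truly applies there). This rests on the fact that the extended Dirichlet data and the extended coefficients are compatible with the reflection symmetry, so that the even extension is precisely the variational solution on $B_r$ with symmetric boundary data; by uniqueness it coincides with what one obtains by reflecting. All other steps are essentially bookkeeping of Minkowski inequalities and the $\mu^{-n/p}$ volume factor, which is harmless since it is absorbed into $C_\mu$.
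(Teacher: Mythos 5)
Your proposal is correct and follows essentially the same route as the paper: a $p$-harmonic replacement $v_1$ that is even in $z_n$, the interior oscillation decay from Lemma~\ref{lem:mean1}, the comparison estimate \eqref{comp1}, and the parity of $D_{z'}u_1$, $D_{z_n}u_1$ to pass between the half-ball component-split quantity and the full-ball BMO-type quantity, with the $\mu^{-n/p}$ factor absorbed into $C_\mu$. The only cosmetic difference is that the paper works with full-ball oscillations throughout and translates to the half-ball form at the very end (``since $u_1$ is even in $z_n$''), whereas you perform that translation up front; the two orderings are equivalent.
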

\begin{proof}
 By Lemma \ref{lem:mean1}, \eqref{comp1}, and the triangle inequality, we have
\begin{equation}\label{ineq:phi}
\begin{aligned}
        &\left(\fint_{B_{\mu r}}|D_z u_1-(D_z u_1)_{B_{\mu r}}|^{p}\right)^{1/p}
        \\
        &\leq C\left(\fint_{B_{\mu r}}|D_z v_1-(D_z v_1)_{B_{\mu r}}|^{p}\right)^{1/p}+C\left(\fint_{B_{\mu r}}|D_z u_1-D_z v_1|^{p}\right)^{1/p}\\
        &\leq C\mu^{\alpha}\left(\fint_{B_r}|D_z v_1-(D_z v_1)_{B_{r}}|^{p}\right)^{1/p}+ C\mu^{-\frac n{p}}\left(\fint_{B_r}|D_z u_1-D_z v_1|^{p}\right)^{1/p}\\
        &\leq C\mu^{\alpha}\left(\fint_{B_r}|D_z u_1-(D_z u_1)_{B_{r}}|^{p}\right)^{1/p}+ C\mu^{-\frac n{p}}\left(\fint_{B_r}|D_z u_1-D_z v_1|^{p}\right)^{1/p}\\
        &\leq C\mu^{\alpha}\left(\fint_{B_r}|D_z u_1-(D_z u_1)_{B_{r}}|^{p}\right)^{1/p}+C_\mu r^{\theta_p} \left(\fint_{B_{r}}|D_z u_1|^{p}\right)^{1/p}.
\end{aligned}
\end{equation}
Since $u_1$ is even in $z_n$, \eqref{ineq:phi} directly implies \eqref{ineq:pr}. The proof is completed.
\end{proof}
We now define
\begin{align}\label{def:psi}
\psi(\hat{x}_0,r)=\left(\fint_{\Omega_{r}(\hat{x}_0)}|D_{y'} u-(D_{y'} u)_{\Omega_r(\hat{x}_0)}|^{p}+|D_{y_n}u|^{p}\right)^{1/p}.
\end{align}
Let $\mu\in(0,1)$ and  $r\in (0, R_1/2]$ be constants. By using change of variables, \eqref{chi}, \eqref{bdry}, \eqref{det=1}, and the triangle inequality,  we have
\begin{equation}\label{eq:geq}
\begin{aligned}
    &\left(\fint_{B^+_{ \mu r}}|D_{z'} u_1-(D_{z'} u_1)_{B^+_{ \mu r}}|^{p}+|D_{z_n}u_1|^{p}\right)^{1/p}\\
    &= \left(\fint_{\Lambda^{-1}(B^+_{ \mu r})}|D_{y'} u+D_{y_n} u \,D_{y'}\chi-(D_{y'} u+D_{y_n} u \,D_{y'}\chi)_{\Lambda^{-1}(B^+_{ \mu r})}|^{p}+|D_{y_n} u|^{p}\right)^{1/p}\\
     &\geq C\left(\fint_{\Omega_{ \mu r/2}(\hat{x}_0)}|D_{y'} u-(D_{y'} u)_{\Omega_{ \mu r/2}(x_0)}|^{p}+|D_{y_n}u|^{p}\right)^{1/p}\\
     &\quad -C' \left(\fint_{\Omega_{\mu r/2}(\hat{x}_0)}|D_{y_n}u\; D_{y'}\chi|^{p}\right)^{1/p} \\
    &\geq C\psi(\hat{x}_0,\mu r/2)-C'\mu r\left(\fint_{\Omega_{\mu r/2}(\hat{x}_0)} |D u|^{p}\right)^{1/p},
\end{aligned}
\end{equation}
where $C$ and $C'$ are positive constants depending on $n$, $p$, $c_1$, and $c_2$.
Similarly,
\begin{equation}\label{eq:leq}
\begin{aligned}
    &\left(\fint_{B^+_{ r}}|D_{z'} u_1-(D_{z'} u_1)_{B^+_{ r}}|^{p}+|D_{z_n}u_1|^{p}\right)^{1/p}\\
    &\leq C''\psi(\hat{x}_0,2r)+C''r\left(\fint_{\Omega_{2r}(\hat{x}_0)} |D u|^{p}\right)^{1/p},
\end{aligned}
\end{equation}
where $C''$ is a positive constant depending only on $n$, $p$, $c_1$, and $c_2$. Therefore, by using \eqref{eq:geq}, \eqref{eq:leq}, and \eqref{volume1}, \eqref{ineq:pr} implies that
\begin{align*}
        &\psi(\hat{x}_0,\mu r/2)\leq C\mu^{\alpha}\psi(\hat{x}_0,2r)+
C_\mu r^{\theta_p}\left(\fint_{\Omega_{2r}(\hat{x}_0)}|D u|^{p}\right)^{1/p}.
\end{align*}
By replacing $\mu/4$ and $2r$ with $\mu$ and $r$ respectively, we obtain
\begin{align*}
       &\psi(\hat{x}_0,\mu r)\leq C\mu^{\alpha}\psi(\hat{x}_0, r)+C_\mu r^{\theta_p}\left(\fint_{\Omega_{r}(\hat{x}_0)}|D u|^{p}\right)^{1/p}
\end{align*}
for $\mu\in(0,1/4)$ and $r\in(0,R_1]$, where we recall that $R_1=c_4(\varepsilon+|\hat{x}_0'|^2)$.
Note that the same argument above also holds when $\hat{x}_0\in \Gamma_{+}$. Therefore, using the same argument as in Corollary \ref{cor:mean1}, we have
\begin{lemma}\label{lem:mean2}
Suppose that $u$ is a solution to \eqref{equzero} and $\hat{x}_0\in (\Gamma_+\cup\Gamma_-)\cap \{x\in\mathbb{R}^n: |x'|\leq 3/4\}$. Then there exist constants $c_4\in(0,1)$ and  $C>0$, both depending only on $n$, $p$, $c_1$, and $c_2$, and $C_\mu>0$ depending on $n$, $p$, $c_1$, $c_2$, and $\mu$, such that for any $\mu\in(0,1/4)$ and $r\in(0,\,c_4(\varepsilon +|\hat{x}_0'|^2)]$,  it holds that
\begin{equation*}
\begin{aligned}
       &\psi(\hat{x}_0,\mu r)\leq C\mu^{\alpha}\psi(\hat{x}_0, r)+C_\mu r^{\theta_p}\left(\fint_{\Omega_{r}(\hat{x}_0)}|D u|^{p}\right)^{1/p},
\end{aligned}
\end{equation*}
where $\theta_p=\min\{1,2/p\}$, $\alpha$ is the same constant as in Lemma \ref{lem:mean1}, and $\psi$ is defined in \eqref{def:psi}.
Moreover, there exist constants $\mu_2=\mu_2(n,p,c_1,c_2)\in(0,1/4)$ and $C_\mu'=C_\mu'(n,p,c_1,c_2,\mu)>0$, such that for any $\mu\in(0,\mu_2]$ and $K\in\mathbb{N}$, it holds that
$$
\sum_{k=0}^{K+1}\psi(\hat{x}_0,\mu^k r)\leq 2\,\psi(\hat{x}_0, r)+C_\mu' \sum_{k=0}^{K} (\mu^k r)^{\theta_p}\left(\fint_{\Omega_{\mu^k r}(\hat{x}_0)}|D u|^{p}\right)^{1/p}.
$$

\subsection{Mean oscillation estimates for large \texorpdfstring{$r$}{r}}
\end{lemma}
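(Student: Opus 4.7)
The plan is to leverage the machinery built up in the preceding text, which already contains the main analytic ingredients, and then perform a geometric-series iteration to obtain the summed estimate.

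First, I would note that the first inequality in the lemma has essentially been derived in the discussion preceding the statement: one flattens $\Gamma_-$ near $\hat x_0$ via the $C^{1,1}$ map $\Lambda$ (so that $\chi$ satisfies \eqref{def:chi}--\eqref{chi}), performs the even/odd extensions of $u_1$ and the coefficients to convert the conormal boundary condition in \eqref{eq:u1} into the interior equation \eqref{eq:u1:ext} on $B_{R_1}$, compares $u_1$ with the $p$-harmonic replacement $v_1$ solving \eqref{eq:v1} via the comparison estimate \eqref{comp1}, applies the interior decay Lemma \ref{lem:mean1} to $v_1$, and translates back through $\Lambda^{-1}$ using \eqref{eq:geq} and \eqref{eq:leq}. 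The small discrepancies (the $\mu r(\fint|Du|^p)^{1/p}$ error from the change of coordinates, and the volume shrinkage factor $\mu^{-n/p}$ when replacing $\fint_{\Omega_{\mu r/2}}$ by $\fint_{\Omega_{2r}}$) are absorbed into $C_\mu r^{\theta_p}(\fint|Du|^p)^{1/p}$ using $r^{\theta_p}\ge r$ for $r\le 1$ together with the volume equivalence \eqref{volume1}. A final rescaling $\mu\to 4\mu$, $r\to r/2$ then produces the displayed first inequality for $\mu\in(0,1/4)$, $r\in(0,R_1]=(0,c_4(\varepsilon+|\hat x_0'|^2)]$. The case $\hat x_0\in\Gamma_+$ is symmetric.

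For the summed ``Moreover'' estimate I would mimic the proof of Corollary \ref{cor:mean1}. Choose $\mu_2\in(0,1/4)$ small enough that $C\mu_2^\alpha\le 1/2$, where $C$ is the constant in the first inequality. For any $\mu\in(0,\mu_2]$ and $r\in(0,R_1]$ the constraint $\mu^k r\le c_4(\varepsilon+|\hat x_0'|^2)$ is inherited automatically for every $k\ge 0$, so I can apply the first inequality at each scale $\mu^k r$ to get
$$
\psi(\hat x_0,\mu^{k+1}r)\le\tfrac12\,\psi(\hat x_0,\mu^k r)+C_\mu(\mu^k r)^{\theta_p}G_k,\qquad G_k:=\left(\fint_{\Omega_{\mu^k r}(\hat x_0)}|Du|^p\right)^{1/p}.
$$
Summing over $k=0,1,\ldots,K$ and adding $\psi(\hat x_0,r)$ to both sides yields
$$
\sum_{k=0}^{K+1}\psi(\hat x_0,\mu^k r)\le\psi(\hat x_0,r)+\tfrac12\sum_{k=0}^{K}\psi(\hat x_0,\mu^k r)+C_\mu\sum_{k=0}^K(\mu^k r)^{\theta_p}G_k.
$$
Since the middle sum on the right is bounded by $\tfrac12\sum_{k=0}^{K+1}\psi(\hat x_0,\mu^k r)$, it can be absorbed into the left-hand side, producing the claimed inequality with $C_\mu'=2C_\mu$.

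The genuinely delicate part of the argument is already completed in the preparatory work preceding the statement, namely the construction of the straightening map $\Lambda$ with control \eqref{co-diff}, the even/odd extension that yields the interior equation \eqref{eq:u1:ext}, and the quantitative perturbation bound \eqref{co-diff2} making $\mathbf A(z,\xi)$ a small $C^{0,1}$-in-$z'$ perturbation of the standard $p$-Laplacian. Given these, what remains is a two-step routine: (i) read off the first mean-oscillation inequality from what has already been shown, (ii) run a geometric-series iteration. The only thing worth double-checking is that the constants and volume factors genuinely line up so that $C_\mu$ depends only on $(n,p,c_1,c_2,\mu)$ as claimed, and that absorbing the $\mu^{-n/p}$ volume factor does not destroy the dependence structure of $C_\mu'$.
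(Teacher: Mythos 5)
Your proposal is correct and follows essentially the same approach as the paper: the first inequality is indeed obtained, exactly as you describe, by combining the boundary-flattening map $\Lambda$, the even/odd extension yielding \eqref{eq:u1:ext}, the comparison estimate \eqref{comp1}, the interior decay of Lemma \ref{lem:mean1}, and the transfer estimates \eqref{eq:geq}--\eqref{eq:leq}, followed by the rescaling $\mu\mapsto \mu/4$, $r\mapsto 2r$; and the ``Moreover'' part is, as the paper also indicates, the same iteration as in Corollary \ref{cor:mean1}, with your absorption step handling the extra error term correctly (yielding $C_\mu'=2C_\mu$, after taking $\mu_2\le 1/4$ small enough that $C\mu_2^\alpha\le 1/2$).
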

\subsection{Mean oscillation estimates for large \texorpdfstring{$r$}{r}}\label{sec1.4}
Finally, we consider the case when $B_{r}(x_0)$ could potentially intersects with both $\Gamma_+$ and $\Gamma_-$. In this case, we assume $x_0\in\Omega_{1/2}$ and  $\frac{c_4}{12} (\varepsilon +|x_0'|^2)\leq r\leq c_5(\varepsilon +|x_0'|^2)^{\frac{1}{2}}$, where $c_4$ is the same constant as in Lemma \ref{lem:mean2} and $c_5$ is a constant which will be determined later.
 We define the map $\mathcal{Z}=\Tilde{\Lambda}(x)$ by
\begin{equation*}
\left\{
\begin{aligned}
\mathcal{Z}' &= x'-x_0',\\
\mathcal{Z}_n &= (h_1(x_0')-h_2(x_0')+\varepsilon)\Big(\frac{x_n-h_2(x')+{\varepsilon}/{2}}{h_1(x')-h_2(x')+\varepsilon}-\frac{1}{2}\Big).
\end{aligned}
\right.
\end{equation*}
Thus $\Tilde{\Lambda}$ is invertible in ${\Omega}_{x_0,1/2}$,
$$
{Q}_{1/2}:=\Tilde{\Lambda}({\Omega}_{x_0,1/2})=\big\{(\mathcal{Z}',\mathcal{Z}_n)\in \mathbb{R}^n: \,|\mathcal{Z}'|<\frac{1}{2}, \,|\mathcal{Z}_n|<\frac{1}{2}(h_1(x_0')-h_2(x_0')+\varepsilon)\big\},
$$
and
\begin{align*}
     \Tilde{\Gamma}_{\pm}
     &:=\Tilde{\Lambda}(\Gamma_{\pm}\cap\{x\in\mathbb{R}^n:|x'-x_0'|<\frac{1}{2}\})\\
     &=\big\{(\mathcal{Z}',\mathcal{Z}_n)\in \mathbb{R}^n: \,|\mathcal{Z}'|<\frac{1}{2}, \,\mathcal{Z}_n=\pm \frac{1}{2}(h_1(x_0')-h_2(x_0')+\varepsilon)\big\}.
\end{align*}
Then $u_2(\mathcal{Z}):=u(\Tilde{\Lambda}^{-1}(\mathcal{Z}))$ satisfies the following equation with homogeneous conormal boundary condition
\begin{equation*}
    \left\{
\begin{aligned}
     -\dv_\mathcal{Z}\left(|B^T D_\mathcal{Z} u_2|^{p-2}(\text{det}( B))^{-1}BB^T  D_\mathcal{Z} u_2\right)=\;&0 \quad  \text{in} \,\, {Q}_{1/2}\\
   \left(|B^T D_\mathcal{Z} u_2|^{p-2}(\text{det}( B))^{-1}BB^T  \,D_\mathcal{Z} u_2\right)_n=\;&0 \quad \text{on}\,\, \Tilde{\Gamma}_{\pm},
\end{aligned}
\right.
\end{equation*}
where we denote
$$
B:=B(\mathcal{Z}):=(b_{ij}(\mathcal{Z})):=D{\Tilde{\Lambda}}(\Tilde{\Lambda}^{-1}(\mathcal{Z})).
$$

For $\mathcal{Z}\in Q_{1/2}$, let $x=\Tilde{\Lambda}^{-1}(\mathcal{Z})$.
 Then
$$b_{ii}(\mathcal{Z})=1 \quad \text{for}  \;i\in\{1,2,\ldots,n-1\},$$
$$ b_{ij}(\mathcal{Z})=0 \quad \text{for}\; i\neq j, \; i\in\{1,2,\ldots,n-1\},\;j\in\{1,2,\ldots,n\},
$$
\begin{align*}
    b_{nj}(\mathcal{Z})&=\frac{h_1(x_0')-h_2(x_0')+\varepsilon}
    {(h_1(x')-h_2(x')+\varepsilon)^2}\\
    &\quad \cdot \big[D_{x_j}h_2(x')\big(x_n-h_1(x')-\frac{\varepsilon}{2}\big)-D_{x_j} h_1(x')\big(x_n-h_2(x')+\frac{\varepsilon}{2}\big)\big]
\end{align*}
for $j\in\{1,2,\ldots,n-1\}$, and
$$
b_{nn}(\mathcal{Z})=\frac{h_1(x_0')-h_2(x_0')+\varepsilon}{h_1(x')-h_2(x')+\varepsilon}.
$$
Therefore,
\begin{align*}
    \text{det}(B(\mathcal{Z}))=b_{nn}(\mathcal{Z})=\frac{h_1(x_0')-h_2(x_0')+\varepsilon}{h_1(x_0'+\mathcal{Z}')-h_2(x_0'+\mathcal{Z}')+\varepsilon}
\end{align*}
is a function independent of $\mathcal{Z}_n$.
Assume
\begin{equation}
                        \label{eq11.05}
\frac{c_4}{12} (\varepsilon +|x_0'|^2)\leq r\leq \frac{1}{4}(\varepsilon +|x_0'|^2)^{\frac{1}{2}}\leq \frac{1}{2}
\end{equation}
and let $\mathcal{Z}_0= \Tilde{\Lambda}(x_0)$.
Then for any $\mathcal{Z}\in Q_{1/2}$ with $|\mathcal{Z}'|\leq r$ and $x=\Tilde{\Lambda}^{-1}(\mathcal{Z})$, by the triangle inequality, we have
$$
|x'|\leq r+|x_0'|\leq \big(1+\sqrt{12/c_4}\big)r^{\frac{1}{2}} \quad \text{and} \quad |x'|^2\geq \frac{1}{2}|x_0'|^2-r^2\geq \frac{1}{4} (|x_0'|^2-\varepsilon).
$$
Thus, using \eqref{fg_0}, \eqref{fg_1}, and \eqref{def:c_2}, we infer that for $j=1,2,\ldots,n-1$ and some constant $C>0$ depending only on $n$, $p$, $c_1$, and $c_2$,
\begin{equation*}
    \begin{aligned}
    &|b_{nj}(\mathcal{Z})|\leq 2c_2\frac{|x'|(h_1(x_0')-h_2(x_0')+\varepsilon)}{h_1(x')-h_2(x')+\varepsilon}\leq 2c_2 \frac{|x'|(2c_2|x_0'|^2 +\varepsilon)}{c_1|x'|^2+\varepsilon}
    \\&\leq C|x'|\leq Cr^{\frac{1}{2}}
    \leq \frac{Cr}{(\varepsilon+|x_0'|^2)^\frac{1}{2}},
\end{aligned}
\end{equation*}
\begin{equation*}
    \begin{aligned}
    &|b_{nn}(\mathcal{Z})-1|=\Big|\frac{\int_0^1 \frac{d}{dt}\big(h_1(tx'+(1-t)x_0')-h_2(tx'+(1-t)x_0')\big)dt}{h_1(x')-h_2(x')+\varepsilon}\Big|\\
    &\leq 2c_2\frac{(|x'|+|x_0'|)|x'-x_0'|}{c_1|x'|^2+\varepsilon}\leq \frac{Cr}{(\varepsilon+|x_0'|^2)^\frac{1}{2}},
\end{aligned}
\end{equation*}
and similarly,
\begin{equation}\label{det-1}
\begin{aligned}
 &\big|\big(\text{det}(B(\mathcal{Z}))\big)^{-1}-1\big|=\big|\big(b_{nn}(\mathcal{Z})\big)^{-1}-1\big|\leq 
 \frac{Cr}{(\varepsilon+|x_0'|^2)^\frac{1}{2}}.
\end{aligned}
\end{equation}

Therefore, when \eqref{eq11.05} holds and $\mathcal{Z}\in Q_{1/2}$ with $|\mathcal{Z}'|\leq r$, we have for some constant $C=C(n,p,c_1,c_2)>0$,
\begin{equation}\label{co-diff3}
   |B(\mathcal{Z})-I_n|\leq \frac{Cr}{(\varepsilon+|x_0'|^2)^\frac{1}{2}}.
\end{equation}
In particular, there exists $c_5=c_5(n,p,c_1,c_2)\in(0,1/4)$, such that for any $\frac{c_4}{12} (\varepsilon +|x_0'|^2)\leq r\leq c_5(\varepsilon +|x_0'|^2)^{\frac{1}{2}}$
and $\mathcal{Z}\in Q_{1/2}$ with $|\mathcal{Z}'|\leq r$, it also holds that
\begin{align}\label{co-diff4}
    |B(\mathcal{Z})-I_n|\leq 1/2\quad \text{and} \quad \big|\big(b_{nn}(\mathcal{Z})\big)^{-1}-1\big|\leq 1/2.
\end{align}
Note that we can always assume $\varepsilon +|x_0'|^2$ to be sufficiently small so that $c_4 (\varepsilon +|x_0'|^2)\leq c_5(\varepsilon +|x_0'|^2)^{\frac{1}{2}}$. Next we extend $u_2$ and ${B}$ to the whole cylinder $\mathcal{C}_{1/2}:=\{(\mathcal{Z}',\mathcal{Z}_n)\in \mathbb{R}^n: \,|\mathcal{Z}'|<1/2\}$.
We take the even extension of $u_2$, $b_{nn}$, and $b_{ij}, i,j = 1,2,\ldots, n-1,$ with respect to $\mathcal{Z}_n = \frac{1}{2}(h_1(x_0')-h_2(x_0')+\varepsilon)$, and take the odd extension of $b_{in}$ and $b_{ni}, i = 1,2,\ldots, n-1,$ with respect to $\mathcal{Z}_n = \frac{1}{2}(h_1(x_0')-h_2(x_0')+\varepsilon)$. Then we take the periodic extension in $\mathcal{Z}_n$ axis, so that the period is equal to $2(h_1(x_0')-h_2(x_0')+\varepsilon)$. We still denote these functions by $u_2$ and ${B}$ after the extension. Then because of the conormal boundary condition, $u_2$ satisfies
\begin{equation}\label{eq:u3}
 -\dv_\mathcal{Z}\big(\mathbf{B}(\mathcal{Z},D_\mathcal{Z} u_2)\big)=\;0 \quad  \text{in} \;\mathcal{C}_{1/2},
\end{equation}
where the nonlinear operator $\mathbf{B}$ is defined as
$$
\mathbf{B}(\mathcal{Z},\xi)= d(\mathcal{Z}')| B^T \xi|^{p-2} BB^T\xi  \quad \quad \text{for} \quad \mathcal{Z}\in\mathcal{C}_{1/2},\,\xi\in \mathbb{R}^n,
$$
and
$$d(\mathcal{Z}'):=\big(b_{nn}(\mathcal{Z})\big)^{-1}=\frac{h_1(\mathcal{Z}'+x_0')-h_2(\mathcal{Z}'+x_0')+\varepsilon}{h_1(x_0')-h_2(x_0')+\varepsilon}.
$$

Similar to \eqref{co-diff2}, using \eqref{det-1}, \eqref{co-diff3}, \eqref{co-diff4}, and \eqref{V}, we obtain that  for any $r\in \big[\frac{c_4}{12} (\varepsilon +|x_0'|^2),c_5(\varepsilon +|x_0'|^2)^{\frac{1}{2}}\big]$, $\mathcal{Z}\in B_{r}(\mathcal{Z}_0)$, and $\xi\in\mathbb{R}^n$,
\begin{align}\label{co-diff5}
    |\mathbf{B}(\mathcal{Z},\xi)-|\xi|^{p-2}\xi|\leq \frac{Cr}{(\varepsilon+|x_0'|^2)^\frac{1}{2}}|\xi|^{p-1},
\end{align}
where $C>0$ is a constant depending only on $n$, $p$, $c_1$, and $c_2$.
Now we let $v_2\in u_2+W_0^{1,p}(B_r(\mathcal{Z}_0))$ be the unique solution to

\begin{equation*}
    \left\{
\begin{aligned}
     -\dv_\mathcal{Z}\big(|D_\mathcal{Z} v_2|^{p-2} D_\mathcal{Z} v_2\big) =\;&  0 \quad \;\;\text{in} \,\, B_{r}(\mathcal{Z}_0),  \\
     v_2 =\; &  u_2 \quad \text{on}\,\, \partial B_{r}(\mathcal{Z}_0).
\end{aligned}
\right.
\end{equation*}
Using \eqref{co-diff5}, similar to \eqref{comp1}, we have the following comparison estimate
\begin{align}\label{comp2}
    \fint_{B_r(\mathcal{Z}_0)}|D_\mathcal{Z} u_2-D_\mathcal{Z} v_2|^p\leq C\Big(\frac{r}{(\varepsilon+|x_0'|^2)^\frac{1}{2}}\Big)^{\min\{2,p\}}\fint_{B_{r}(\mathcal{Z}_0)}|D_\mathcal{Z} u_2|^p,
\end{align}
where $C>0$ is a constant depending only on $n$, $p$, $c_1$, and $c_2$.

We  define
\begin{align}\label{def:phi2}
\Tilde{\phi}(x_0,r)=\left(\fint_{B_{r}(\mathcal{Z}_0)}|D_{\mathcal{Z}} u_2-(D_{\mathcal{Z}} u_2)_{B_r(\mathcal{Z}_0)}|^{p}\right)^{1/p}.
\end{align}
Then following the same proof as that of Lemma \ref{lem:mean2} with \eqref{comp2} in place of \eqref{comp1}, we have
\begin{lemma}\label{lem:mean3}
Suppose that $x_0\in\Omega_{1/2}$ and  $u_2$ is a solution to \eqref{eq:u3}. Then there exist constants $c_5\in(0,1/4)$ and  $C>0$, both depending only on $n$, $p$, $c_1$, and $c_2$, and $C_\mu>0$ depending on $n$, $p$, $c_1$, $c_2$, and $\mu$, such that for any $\mu\in(0,1)$ and $r\in\big[\frac{c_4}{12}(\varepsilon +|x_0'|^2),\,c_5(\varepsilon +|x_0'|^2)^{\frac{1}{2}}]$, it holds that
\begin{equation*}
\begin{aligned}
\Tilde{\phi}(x_0,\mu r)\leq C\mu^\alpha \Tilde{\phi}(x_0,r)+C_\mu\Big(\frac{ r}{(\varepsilon+|x_0'|^2)^\frac{1}{2}}\Big)^{\theta_p}\left(\fint_{B_{r}(\mathcal{Z}_0)}|D_\mathcal{Z} u_2|^{p}\right)^{1/p},
\end{aligned}
\end{equation*}
where $\theta_p=\min\{1,2/p\}$, $\alpha$ is the same constant as in Lemma \ref{lem:mean1}, $c_4$ is the same constant as in Lemma \ref{lem:mean2}, and $\Tilde{\phi}$ is defined in \eqref{def:phi2}.
Moreover, there exist constants $\mu_3=\mu_3(n,p,c_1,c_2)\in(0,1)$ and $C_\mu'=C_\mu'(n,p,c_1,c_2,\mu)>0$, such that for any $\mu\in(0,\mu_3]$ and $k_1,\, k_2\in\mathbb{N}$ satisfying $\frac{c_4}{12}(\varepsilon +|x_0'|^2)\leq \mu ^{k_2}r\leq \mu^{k_1}r\leq c_5(\varepsilon +|x_0'|^2)^{\frac{1}{2}}$,  it holds that
$$
\sum_{k=k_1}^{k_2+1}\Tilde{\phi}(x_0,\mu^k r)\leq 2\,\Tilde{\phi}(x_0, \mu^{k_1}r)+C_\mu'\sum_{k=k_1}^{k_2} \Big(\frac{ \mu^k r}{(\varepsilon+|x_0'|^2)^\frac{1}{2}}\Big)^{\theta_p}\left(\fint_{B_{\mu^k r}(\mathcal{Z}_0)}|D_\mathcal{Z} u_2|^{p}\right)^\frac{1}{p}.
$$
\end{lemma}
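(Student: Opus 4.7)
The plan is to follow the proof of Lemma \ref{lem:mean2} essentially verbatim, but with two modifications: replacing the change-of-variables map $\Lambda$ by $\tilde\Lambda$, and crucially, replacing the comparison estimate \eqref{comp1} by \eqref{comp2}. All the hard work has already been done in setting up \eqref{eq:u3}, verifying \eqref{co-diff3}--\eqref{co-diff4} so that the extended operator $\mathbf{B}(\mathcal{Z},\xi)$ is well-defined and close to $|\xi|^{p-2}\xi$ (quantified by \eqref{co-diff5}), and establishing the comparison bound \eqref{comp2}.

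First, since the extended $u_2$ satisfies \eqref{eq:u3} in the full cylinder $\mathcal{C}_{1/2}$, and $B_r(\mathcal{Z}_0) \subset \mathcal{C}_{1/2}$ for $r$ in the stated range, we can define the $p$-harmonic replacement $v_2 \in u_2 + W^{1,p}_0(B_r(\mathcal{Z}_0))$. Then I would apply Lemma \ref{lem:mean1} to $v_2$ on $B_r(\mathcal{Z}_0)$ (this only needs $v_2$ to solve the standard $p$-Laplace equation on a ball, which holds by construction), obtaining
\begin{equation*}
\left(\fint_{B_{\mu r}(\mathcal{Z}_0)}|D_\mathcal{Z} v_2-(D_\mathcal{Z} v_2)_{B_{\mu r}(\mathcal{Z}_0)}|^p\right)^{1/p} \le C\mu^\alpha \left(\fint_{B_r(\mathcal{Z}_0)}|D_\mathcal{Z} v_2-(D_\mathcal{Z} v_2)_{B_r(\mathcal{Z}_0)}|^p\right)^{1/p}.
\end{equation*}

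Next, inserting $v_2$ via triangle inequality and using the trivial volume factor $\mu^{-n/p}$ when passing from $B_{\mu r}$ to $B_r$, exactly as in \eqref{ineq:phi}, I would derive
\begin{equation*}
\tilde\phi(x_0,\mu r) \le C\mu^\alpha \tilde\phi(x_0,r) + C\mu^{-n/p}\left(\fint_{B_r(\mathcal{Z}_0)}|D_\mathcal{Z} u_2-D_\mathcal{Z} v_2|^p\right)^{1/p}.
\end{equation*}
The comparison estimate \eqref{comp2} converts the last term into $C_\mu\bigl(r/(\varepsilon+|x_0'|^2)^{1/2}\bigr)^{\theta_p}\bigl(\fint_{B_r(\mathcal{Z}_0)}|D_\mathcal{Z} u_2|^p\bigr)^{1/p}$, where $\theta_p = \min\{1,2/p\}$ because $\min\{2,p\}/p = \theta_p$. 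This gives the first stated inequality.

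For the summation, I would pick $\mu_3 \in (0,1)$ so that $C\mu_3^\alpha \le 1/2$. Iterating the first inequality with $\mu = \mu_3$ and $r$ replaced by $\mu^k r$ (valid as long as $\mu^k r$ stays in the admissible range $[\tfrac{c_4}{12}(\varepsilon+|x_0'|^2),\,c_5(\varepsilon+|x_0'|^2)^{1/2}]$, which is precisely the constraint $k_1 \le k \le k_2$), then summing the telescoping geometric series, yields
\begin{equation*}
\sum_{k=k_1}^{k_2+1}\tilde\phi(x_0,\mu^k r) \le 2\,\tilde\phi(x_0,\mu^{k_1}r) + C_\mu' \sum_{k=k_1}^{k_2}\Big(\frac{\mu^k r}{(\varepsilon+|x_0'|^2)^{1/2}}\Big)^{\theta_p}\left(\fint_{B_{\mu^k r}(\mathcal{Z}_0)}|D_\mathcal{Z} u_2|^p\right)^{1/p}.
\end{equation*}

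I do not anticipate any serious obstacles: the genuinely delicate part of this whole subsection is the construction of $\tilde\Lambda$, the extension of $u_2$ and $B$ (periodic in $\mathcal{Z}_n$), the verification of the bounds on $b_{nj}, b_{nn}$, and the derivation of \eqref{co-diff5} and \eqref{comp2}---all of which precede the lemma. Once those are in hand, this lemma is a clean adaptation of Lemma \ref{lem:mean2}, with the parameter $|z'|$ (the smallness of the coefficients in Lemma \ref{lem:mean2}) replaced throughout by the scale-invariant quantity $r/(\varepsilon+|x_0'|^2)^{1/2}$.
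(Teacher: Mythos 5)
Your proposal is correct and matches the paper's approach exactly: the paper itself states that Lemma~\ref{lem:mean3} follows "following the same proof as that of Lemma~\ref{lem:mean2} with \eqref{comp2} in place of \eqref{comp1}," which is precisely your argument — apply Lemma~\ref{lem:mean1} to the $p$-harmonic replacement $v_2$ on $B_r(\mathcal{Z}_0)$, insert it via the triangle inequality and a $\mu^{-n/p}$ volume factor, convert the error term with \eqref{comp2}, then choose $\mu_3$ so $C\mu_3^\alpha\le 1/2$ and telescope. You also correctly note the simplification that, since $\tilde\phi$ is defined directly in the $\mathcal{Z}$-coordinates on full balls, there is no need for the reversion to original coordinates (the analogues of \eqref{eq:geq}--\eqref{eq:leq}) or the half-ball/even-reflection step that Lemma~\ref{lem:mean2} required.
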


\subsection{Proof of Theorem \ref{thm-1/2}}
Now we are ready to prove Theorem \ref{thm-1/2}.

\begin{proof}[Proof of Theorem \ref{thm-1/2}]
We prove the theorem at any point $x_0\in \Omega_{1/2}$.

\emph{Step 1: Notation and choices of constants.}
 We choose $\mu=\frac{1}{6}\min\{\mu_1,\mu_2,\mu_3\}$, where $\mu_1$, $\mu_2$, and $\mu_3$ are the same constants as in Corollary \ref{cor:mean1},  Lemma \ref{lem:mean2}, and Lemma \ref{lem:mean3}.
We define $r_j=\frac{c_5}{2}\mu^j (\varepsilon+|x_0'|^2)^\frac{1}{2}$ and let $j_1$, $j_2$ be the integers such that $$r_{j_1}\geq \frac{c_4}{6} (\varepsilon+|x_0'|^2),\quad r_{j_1+1}<\frac{c_4}{6}(\varepsilon+|x_0'|^2),$$
and
$$r_{j_2}\geq \text{dist}(x_0,\Gamma_+\cup\Gamma_-), \quad r_{j_2+1}<\text{dist}(x_0,\Gamma_+\cup \Gamma_-),$$
where $c_4$ and $c_5$ are the same constants as in Lemma \ref{lem:mean2} and Lemma \ref{lem:mean3}. Note that we can assume $\varepsilon+|x_0'|^2$ to be sufficiently small so that $\frac{c_5}{2}(\varepsilon+|x_0'|^2)^\frac{1}{2}>\frac{c_4}{6} (\varepsilon+|x_0'|^2)$ and thus $j_1\geq 0$.

We denote
\begin{equation*}
    \phi_j=\left\{
\begin{aligned}
     & \left(\fint_{B_{ r_j}(\mathcal{Z}_0)}|D_{\mathcal{Z}} u_2-(D_{\mathcal{Z}} u_2)_{B_{ r_j}(\mathcal{Z}_0)}|^{p}\right)^{1/p} \quad \quad \,\text{if} \quad 0\leq j\leq j_1,\\
     & \left(\fint_{\Omega_{ r_j}(x_0)}|D u-(D u)_{\Omega_{ r_j}(x_0)}|^{p}\right)^{1/p} \quad  \,\;\text{if} \quad j\geq j_1+1,
\end{aligned}
\right.
\end{equation*}

\begin{equation*}
    T_j=\left\{
\begin{aligned}
     & \left(\fint_{B_{ r_j}(\mathcal{Z}_0)}|D_{\mathcal{Z}} u_2|^{p}\right)^{1/p} \quad \;\;\;\text{if} \quad 0\leq j\leq j_1,\\
     & \left(\fint_{\Omega_{ 3r_j}(x_0)}|D u|^{p}\right)^{1/p} \quad \,\text{if} \quad j\geq j_1+1,
\end{aligned}
\right.
\end{equation*}
and
\begin{equation*}
    \mathbf{m}_j=\left\{
\begin{aligned}
     & (D_{\mathcal{Z}} u_2)_{B_{ r_j}(\mathcal{Z}_0)}\quad \;\;\text{if} \quad 0\leq j\leq j_1,\\
     & (D u)_{\Omega_{ r_j}(x_0)} \quad  \,\text{if} \quad j\geq j_1+1,
\end{aligned}
\right.
\end{equation*}
where  $u_2$, $\mathcal{Z}_0$, and the coordinate $\mathcal{Z}$ are defined in Section \ref{sec1.4}.
In the following proof, we use $C$, $C'$ to denote positive constants depending only on $n$, $p$, $c_1$, and $c_2$, which may differ from line to line.

\emph{Step 2: Preliminary estimates and iterations.}
Next, we derive some preliminary estimates.
First, we show that there exists a constant $c=c(n)>0$, such that for any $j\geq j_1+1$,
\begin{align}\label{volume}
    |\Omega_{r_j}(x_0)|\geq c \,r_j^n.
\end{align}
If $r_j\leq 2 \,\text{dist}(x_0,\Gamma_+\cup \Gamma_-)$, then
$B_{\frac{1}{2}r_j}(x_0)\subset\Omega_{r_j}(x_0)$ and \eqref{volume} clearly holds. Otherwise, assume $r_j>2 \,\text{dist}(x_0,\Gamma_+\cup \Gamma_-)$. Then we choose $\hat{x}_0\in \Gamma_+\cup\Gamma_-$ such that $\text{dist}(x_0,\Gamma_+\cup\Gamma_-)=|\hat{x}_0-x_0|$, and thus $\Omega_{\frac{1}{2}r_j}(\hat{x}_0)\subset \Omega_{r_j}(x_0)$. Note that we can assume
$\varepsilon+c_2|x_0'|^2\leq 1/4$ and thus by \eqref{def:c_2} and the triangle inequality, $|\hat{x}_0'|\leq 3/4$. Since $j\geq j_1+1$, by the triangle inequality again, we know that $|x_0'|\le |\hat{x}_0'|+r_j/2$ and
$$r_j< \frac{c_4}{6} (\varepsilon+|x_0'|^2)\leq  \frac{c_4}{6} \big(\varepsilon+2|\hat{x}_0'|^2+\frac{1}{2}r_j^2\big).$$
Since $c_4\in (0,1)$ and $r_j\in(0,1)$, we get
\begin{align*}
r_j< \frac{c_4}{2}(\varepsilon+|\hat{x}_0'|^2).
\end{align*}
By using \eqref{volume1}, we have
$$|\Omega_{r_j}(x_0)|\geq |\Omega_{\frac{1}{2}r_j}(\hat{x}_0)|\geq c\, r_j^n.$$
Thus, \eqref{volume} holds for every $j\geq j_1+1$.

By \eqref{volume} and H\"older's inequality, for any $j\in \mathbb{N}$, we have
\begin{align}\label{mbound}
     |\mathbf{m}_j|\leq C T_j.
\end{align}
Note that since $\mu\leq 1/6$, $\Omega_{3r_{j+1}}(x_0)\subset \Omega_{\frac{1}{2}r_j}(x_0)$ and thus by \eqref{volume}, \eqref{co-diff4} and the definition of $j_1$,
\begin{align*}
    T_{j_1+1}\leq C\left(\fint_{\Omega_{ \frac{1}{2}r_{j_1}}(x_0)}|D u|^{p}\right)^{1/p} \leq C\, T_{j_1}.
\end{align*}
Therefore, there exists $c_6=c_6(n,p,c_1,c_2)>0$, such that for any $j\in\mathbb{N}$,
\begin{align}\label{def:c_6}
    T_{j+1}\leq c_6\, T_{j}.
\end{align}
By \eqref{def:c_6} and the triangle inequality, for any $j\leq j_1$, we have
$$
T_{j+1}\leq c_6\,T_{j}\leq C\,|\mathbf{m}_j|+C\,\phi_j.
$$
For $j\geq j_1+1$, since $\mu\leq 1/6$, $\Omega_{3r_{j+1}}(x_0)\subset \Omega_{r_j}(x_0)$ and thus by \eqref{volume} and the triangle inequality, we have
$$
T_{j+1}\leq C\,\left(\fint_{\Omega_{ r_j}(x_0)}|D u|^{p}\right)^{1/p}\leq C\,|\mathbf{m}_j|+C\,\phi_j.
$$
Therefore, there exists $c_7=c_7(n,p,c_1,c_2)>0$, such that for any $j\in\mathbb{N}$,
\begin{align}\label{def:c_7}
    T_{j+1}\leq c_7\,|\mathbf{m}_j|+c_7\,\phi_j.
\end{align}
For any $0\leq k\leq j_1$, since
\begin{align*}
   |\mathbf{m}_{k}-\mathbf{m}_{k-1}|^{p}\leq 2^{p-1}|\mathbf{m}_{k}-D_\mathcal{Z} u_2(\mathcal{Z})|^{p}+2^{p-1}|D_\mathcal{Z} u_2(\mathcal{Z})-\mathbf{m}_{k-1}|^{p},
\end{align*}
by taking the average over $\mathcal{Z}\in B_{r_{k}}(\mathcal{Z}_0)$ and then taking the $p$-th root, we obtain
\begin{align*}
    |\mathbf{m}_{k}-\mathbf{m}_{k-1}|\leq C\phi_k+C\phi_{k-1}.
\end{align*}
Then by iterating, we get
\begin{align}\label{mdiff1}
     |\mathbf{m}_{j}-\mathbf{m}_{j_0}|\leq C \sum_{k=j_0}^{j}\phi_k,
\end{align}
for any integers $j_0$, $j$ satisfying $0\leq j_0\leq j\leq j_1$.
By \eqref{mdiff1}, \eqref{mbound}, and the triangle inequality, we have
\begin{align}\label{mbound2}
     |\mathbf{m}_{j}|\leq C\,T_{j_0} + C \sum_{k=j_0}^{j}\phi_k,
\end{align}
for $j_0\leq j\leq j_1$.
Similarly, for any integers $j$, $l$ satisfying $j_1+1\leq l\leq j$, we also have
\begin{align*}
    |\mathbf{m}_{j}-\mathbf{m}_{l}|\leq C \sum_{k=l}^{j}\phi_k,
\end{align*}
and
\begin{align}\label{mbound3}
    |\mathbf{m}_{j}|\leq C \,T_{l}+C \sum_{k=l}^{j}\phi_k.
\end{align}
For $j\in\{j_0, \ldots, j_1\}$, from Lemma \ref{lem:mean3} and \eqref{mbound2}, we know that
\begin{align}\label{big}
    |\mathbf{m}_j|+\sum_{k=j_0}^{j} \phi_k\leq C\,T_{j_0}+C\, \sum_{k=j_0}^{j} \Big(\frac{ r_k}{(\varepsilon+|x_0'|^2)^\frac{1}{2}}\Big)^{\theta_p} T_k\leq C\,T_{j_0}+C\, \sum_{k=j_0}^{j} \mu^{k\theta_p} T_k.
\end{align}
For $j\in\{j_1+1,\ldots,j_2\}$, we have $r_{j}\geq \text{dist}(x_0,\Gamma_+\cup\Gamma_-)$. Choose $\hat{x}_0\in (\Gamma_+\cup\Gamma_-)\cap \{x\in\mathbb{R}^n: |x'|\leq 3/4\}$ such that $\text{dist}(x_0,\Gamma_+\cup\Gamma_-)=|\hat{x}_0-x_0|$, and thus $\Omega_{r_j}(x_0)\subset \Omega_{2r_j}(\hat{x}_0)\subset \Omega_{3r_j}(x_0)$.
Then $$r_j< \frac{c_4}{6} (\varepsilon+|x_0'|^2)\leq  \frac{c_4}{6} (\varepsilon+2|\hat{x}_0'|^2+2r_j^2),$$
which also implies
\begin{align}\label{cond:mean2}
    2r_j< c_4(\varepsilon+|\hat{x}_0'|^2)
\end{align}
since $c_4\in (0,1)$ and $r_j\in(0,1)$.

 By \eqref{cond:mean2}, we can apply Lemma \ref{lem:mean2} at
 $\hat{x}_0\in\Gamma_+\cup\Gamma_-$ and use \eqref{volume} and \eqref{volume1} to obtain
\begin{equation}\label{mid_1}
\begin{aligned}
\sum_{k=j_1+1}^{j}\phi_k&\leq C \sum_{k=j_1+1}^{j}\psi(\hat{x}_0,2r_k)
\leq  C\,Y_{j_1+1}+
C\sum_{k=j_1+1}^{j}r_k^{\theta_p}Y_k
\\&\leq  C\,T_{j_1+1}
+C\sum_{k=j_1+1}^{j}\mu^{k\theta_p}T_k,
\end{aligned}
\end{equation}
where
$$
Y_j:=\left(\fint_{\Omega_{2r_j}(\hat{x}_0)}|D u|^{p}\right)^{1/p}.
$$
Moreover, from \eqref{mbound3}, \eqref{def:c_7}, and \eqref{mid_1} we also know that
\begin{equation}\label{mid}
\begin{aligned}
&|\mathbf{m}_j|+\sum_{k=j_1+1}^{j}\phi_j\leq  C\,T_{j_1+1}
+C\sum_{k=j_1+1}^{j}\mu^{k\theta_p}T_k\\
&\leq C\,|\mathbf{m}_{j_1}|+C\,\phi_{j_1}+C\sum_{k=j_1+1}^{j}\mu^{k\theta_p}T_k
\end{aligned}
\end{equation}
holds for any $j\in\{j_1+1,\ldots,j_2\}$.

For $j\geq j_2+1$, from Corollary \ref{cor:mean1}, \eqref{mbound3}, and \eqref{def:c_7}  we have
\begin{align}\label{small}
    |\mathbf{m}_j|+\sum_{k=j_2+1}^{j} \phi_k\leq C\,T_{j_2+1}\leq C \,|\mathbf{m}_{j_2}|+C\,\phi_{j_2}.
\end{align}
Combining \eqref{mid} and \eqref{small}, we know that
\begin{align}\label{mid_small}
    |\mathbf{m}_j|+\sum_{k=j_1+1}^{j}\phi_j&\leq C\,|\mathbf{m}_{j_1}|+C\,\phi_{j_1}+C\sum_{k=j_1+1}^{j}\mu^{k\theta_p}T_k
\end{align}
holds for any $j\geq j_1+1$.
Note that \eqref{mid_small} also holds if $j_2\leq j_1$ since in that case $r_j\leq  \text{dist}(x_0,\Gamma_+\cup\Gamma_-)$ for any $j\geq j_1+1$ and thus we can directly use Corollary \ref{cor:mean1} and \eqref{mbound3} to get \eqref{mid_small}.

Moreover, combining \eqref{mid_small} and \eqref{big}, we know that
\begin{equation}\label{all_radii}
|\mathbf{m}_j|+\sum_{k=j_0}^{j} \phi_k\leq C\,T_{j_0}+C\,  \sum_{k=j_0}^{j} \mu^{k\theta_p} T_k
\end{equation}
holds for any $0\leq j_0\leq j_1$ and $j\geq j_0$.

\emph{Step 3: A stopping time argument.}
We choose $j_0=j_0(n,p,c_1,c_2)\in\mathbb{N}$ sufficiently large such that
\begin{equation}\label{j_0}
    (c_7+1)\,C\sum_{k=j_0}^\infty \mu^{k\theta_p}\leq\frac{1}{10},
\end{equation}
where $c_7$ is the constant in \eqref{def:c_7} and $C$ is the constant in \eqref{all_radii}.
Note that we can assume $\varepsilon+|x_0'|^2$ to be sufficiently small so that $\frac{c_5}{2} \mu^{j_0}(\varepsilon+|x_0'|^2)^\frac{1}{2}>\frac{c_4}{6} (\varepsilon+|x_0'|^2)$ and thus $j_1\geq j_0$.
Now we show that
\begin{equation}\label{claim1}
 |D u(x_0)|\leq C\, T_{j_0}.
\end{equation}

We consider the following three possibilities.

{\em Case 1:} If $|D u(x_0)|\leq T_{j_0}$, then \eqref{claim1} directly follows.

{\em Case 2:} If $ T_j< |D u(x_0)|, \ \forall j_0\leq j\leq j_3$, and $|D u(x_0)|\leq T_{j_3+1}$, then by \eqref{def:c_7}, we have
\begin{equation}\label{ineq:grad}
\begin{aligned}
       |D u(x_0)|\leq T_{j_3+1}\leq c_7\,|\mathbf{m}_{j_3}|+c_7\,\phi_{j_3}.
\end{aligned}
\end{equation}
Now applying \eqref{all_radii} with $j=j_3$, from \eqref{ineq:grad} and \eqref{j_0}, we get
\begin{align*}
 |D u(x_0)|\leq C'\,T_{j_0}+C'\,  \sum_{k=j_0}^{j_3} \mu^{k\theta_p}  |D u(x_0)|
 \leq C'\,T_{j_0}+\frac{1}{10}|D u(x_0)|,
\end{align*}
where $C'=c_7\,C$, $C$ is the constant in \eqref{all_radii}. The last inequality directly implies \eqref{claim1} as desired.

{\em Case 3:} If $T_j<|D u(x_0)|$ for every $j\geq j_0$, then from \eqref{all_radii}, we infer that for any $j\geq j_0$,
\begin{align*}
    |\mathbf{m}_j|\leq C\,T_{j_0}+C\,  \sum_{k=j_0}^{j} \mu^{k\theta_p}|D u(x_0)|
    \leq C\,T_{j_0}+\frac{1}{10}|D u(x_0)|.
\end{align*}
Here we used \eqref{j_0} in the last inequality. Letting $j\to \infty$ and using the fact that $u\in C^1({\Omega}_1)$, we get
\begin{align*}
    |D u(x_0)| \leq C\,T_{j_0}+\frac{1}{10}|D u(x_0)|,
\end{align*}
which directly implies \eqref{claim1}.
The proof of the inequality \eqref{claim1} is completed.

\emph{Step 4: Caccioppoli inequality and conclusion.}
Let $\lambda\in\mathbb{R}$ and $\zeta$ be a nonnegative smooth function satisfying $\zeta=1$ in $B_{r_{j_0}}(\mathcal{Z}_0)$, $|D_\mathcal{Z} \zeta|\leq 2r_{j_0}^{-1}$, and $\zeta=0$ outside $B_{2r_{j_0}}(\mathcal{Z}_0)$.
Since $2r_{j_0}\leq c_5(\varepsilon+|x_0'|^2)^{\frac{1}{2}}$,
using $\zeta^p (u_2-\lambda)$ as a test function in \eqref{eq:u3}, by \eqref{co-diff4}, Young's inequality, we  obtain
\begin{align*}
&\frac{1}{2^{p+1}}\int_{B_{2r_{j_0}}(\mathcal{Z}_0)} \zeta^p|D_\mathcal{Z} u_2|^p \leq \int_{B_{2r_{j_0}}(\mathcal{Z}_0)} \langle \mathbf{B}(\mathcal{Z}, D_\mathcal{Z} u_2),\zeta^p D_\mathcal{Z} u_2\rangle
\\&=-p \int_{B_{2r_{j_0}}(\mathcal{Z}_0)} \langle \mathbf{B}(\mathcal{Z}, D_\mathcal{Z} u_2), \zeta^{p-1}  (u_2-\lambda)D_\mathcal{Z}\zeta \rangle\\ &
\leq p2^{p+2}r_{j_0}^{-1}\int_{B_{2r_{j_0}}(\mathcal{Z}_0)} \zeta^{p-1}|D_\mathcal{Z} u_2|^{p-1} |u_2-\lambda|
\\&\leq \frac{1}{2^{p+2}} \int_{B_{2r_{j_0}}} \zeta^p|D_\mathcal{Z} u_2|^p +c(p)\,r_{j_0}^{-p} \int_{B_{2r_{j_0}}(\mathcal{Z}_0)} |u_2-\lambda|.
\end{align*}
Therefore, we have the following Caccioppoli inequality
\begin{align}\label{caccio}
    \int_{B_{ r_{j_0}}(\mathcal{Z}_0)}|D_{\mathcal{Z}} u_2|^{p}\leq {c(p)}\,r_{j_0}^{-p} \int_{B_{ 2r_{j_0}}(\mathcal{Z}_0)}| u_2-\lambda|^{p},
\end{align}
where $\lambda$ is an arbitrary constant and $c(p)$ is a positive constant depending only on $p$.
Since $2r_{j_0}\leq c_5(\varepsilon+|x_0'|^2)^{\frac{1}{2}}\leq \frac{1}{4}(\varepsilon+|x_0'|^2)^{\frac{1}{2}}\leq 1/2$, by choosing $\lambda =(u_2)_{B_{ 2r_{j_0}}(\mathcal{Z}_0)}$ in \eqref{caccio}, and using \eqref{claim1}, we obtain the pointwise blow-up estimate
\begin{align*}
    |D u(x_0)|\leq C (\varepsilon+|x_0'|^2)^{-\frac{1}{2}}\underset{\Omega_{x_0,\eta}}{\osc}~u,
\end{align*}
where $\eta=\frac{1}{4}(\varepsilon+|x_0'|^2)^{\frac{1}{2}}$.
\end{proof}

\section{Improved gradient estimates}\label{sec3}
In this section, we utilize a similar approach of flattening the boundaries and extending the equation, as described in \cite{LY2}, to derive an improved gradient estimate for \eqref{main_problem_narrow} in dimensions $n \ge 3$. However, in contrast to \cite{LY2}, since our equation is degenerate, we need to exploit the nondivergence form of the normalized equation. Consequently, the argument of flattening the boundaries becomes much more intricate, and unlike in \cite{LY2}, where the De Giorgi-Nash-Moser Harnack inequality is applied, we use the Krylov-Safonov Harnack inequality for nondivergence form equations. Furthermore, there are additional first-order terms that require control over the size of the coefficients.

To prove Theorem \ref{Theorem_improved}, for $\eta > 0$, we consider the approximating equation
\begin{equation}\label{main_problem_approximate}
\left\{
\begin{aligned}
-\dv \Big( (\eta + |D u_\eta|^2)^{\frac{p-2}{2}} D u_\eta \Big) &=0 \quad \mbox{in }\widetilde{\Omega},\\
\frac{\partial u_\eta}{\partial \nu} &= 0 \quad \mbox{on}~\partial{D}_{i},~i=1,2,\\
 u_\eta &= \varphi \quad \mbox{on } \partial \Omega.
\end{aligned}
\right.
\end{equation}
Since $\|u_\eta\|_{C^{1,\alpha}(\Omega_{1})}$ is bounded independent of $\eta$, it suffices to prove \eqref{gradient_-1/2+beta} for $u_\eta$. Therefore, we will focus on \eqref{main_problem_approximate} throughout the rest of this section, and denote $u = u_\eta$ for simplicity. Note that $u$ satisfies the normalized $p$-Laplace equation
$$
a^{ij} D_{ij} u = 0 \quad \mbox{in }\Omega_{1},
$$
where
\begin{equation}\label{a_ij_2}
a^{ij} = \delta_{ij} + (p-2)(\eta +|D u|^2)^{-1} D_i u D_j u
\end{equation}
satisfies
\begin{align}\label{a_elliptic_2}
(p-1) |\xi|^2 \le a^{ij}\xi_i \xi_j \le |\xi|^2, \quad \forall \xi \in \bR^n \quad &\mbox{when}~~1<p<2,\nonumber\\
|\xi|^2 \le a^{ij}\xi_i \xi_j \le (p-1) |\xi|^2, \quad \forall \xi \in \bR^n \quad &\mbox{when}~~p \ge 2.
\end{align}

For a small $r_0$ independent of $\varepsilon$, we only need to show \eqref{gradient_-1/2+beta} in $\Omega_{r_0}$, as $|D u|$ is bounded in $\Omega_{1/2} \setminus \Omega_{r_0}$ independent of $\varepsilon$. For any $x \in \Omega_{r_0}$, we estimate $|D u(x)|$ as follows: First we consider the equation in $\Omega_{2r} \setminus \Omega_{r/4}$ for $r \in (\sqrt{\varepsilon}, r_0]$, we perform a suitable change of variables that maps the domain to a flat ``annular cylinder". After the change of variables, $u$ will satisfy a second-order uniformly elliptic equation in non-divergence form, and the Neumann boundary condition on the upper and lower boundaries of the domain. Then we obtain a Harnack inequality through the Krylov-Safonov theorem. Together with the maximum principle, this gives the oscillation of $u$ in $\Omega_{r}$ for $r \in (\sqrt{\varepsilon}, r_0]$  with a decay rate $r^{2\beta}$ for some positive $\varepsilon$-independent $\beta$. Then the desired estimate on $|D u(x)|$ follows from the decay rate of $\osc_{2(\varepsilon + |x'|^2)^{1/2}} u$ and Theorem \ref{thm-1/2}.

Let $r \in (\sqrt{\varepsilon}, r_0]$, where $r_0$ is an $\varepsilon$-independent constant to be determined later. We define
$$
\tilde{h}_i(x') :=
\left\{
\begin{aligned}
&h_i(x') && \mbox{when}~~|x'| \le 2r_0,\\
&0 &&  \mbox{when}~~|x'| > 2r_0
\end{aligned}
\right.
$$
for $i=1,2$. We denote
$$
Q_{s,t} := \{ y = (y', y_n) \in \bR^n ~\big|~ |y'| < s, |y_n| < t \},
$$
and for $y \in Q_{2r,r^2} \setminus Q_{r/4, r^2}$, we define the map $x = \Phi(y)$ by
\begin{equation}\label{Phi}
\left\{
\begin{aligned}
x' &= y' - g(y),\\
x_n &= \frac{1}{2} \Big[\frac{y_n}{r^2} (\varepsilon + \tilde h_1(y') - \tilde h_2(y')) + \tilde h_1(y') +\tilde h_2(y') \Big],
\end{aligned}
\right.
\end{equation}
where
\begin{equation}\label{def_g}
g(y) = (y_n - r^2)(y_n + r^2)(\Theta y_n + \Xi),
\end{equation}
$$
\left\{
\begin{aligned}
\Theta &= \frac{1}{8r^6} [\varepsilon + \tilde h_1(y') - \tilde h_2(y')] D_{y'} [\tilde h_1^\mu(y') + \tilde h_2^\mu(y')],\\
\Xi &= \frac{1}{8r^4} [\varepsilon + \tilde h_1(y') - \tilde h_2(y')] D_{y'} [\tilde h_1^\mu(y') - \tilde h_2^\mu(y')],
\end{aligned}
\right.
$$
$\tilde h_i^\mu$ is a mollification of $\tilde h_i$ given by
\begin{equation}\label{mollification}
\tilde h_i^\mu (y') := \int_{\bR^{n-1}} \tilde h_i (y' - \mu z') \varphi(z') \, dz',
\end{equation}
$\varphi$ is a positive smooth function with unit integral supported in $B_1 \subset \bR^{n-1}$, and
$$\mu = \frac{r^4 - y_n^2}{r}\ge 0.$$

Here we briefly explain the motivation for defining the map $\Phi$ as above: to ensure that $y' = x'$ on $\{y_n = \pm r^2\}$, which is $g\big|_{y_n = \pm r^2} = 0$, we setup the ansatz \eqref{def_g} for $g$. Next, we want the function $v(y):= u(\Phi(y))$ to satisfy the Neumann boundary condition on $\{y_n = \pm r^2\}$, which leads to (see details in Lemma \ref{Change_of_variable_lemma})
$$
\left\{
\begin{aligned}
&\Big(-D_{y_n}g , \frac{1}{2r^2} (\varepsilon + \tilde h_1(y') - \tilde h_2(y'))  \Big) \parallel (-D_{x'} \tilde{h}_1 ,1) \quad \mbox{on}~~\{y_n = r^2\},\\
&\Big(-D_{y_n}g , \frac{1}{2r^2} (\varepsilon + \tilde h_1(y') - \tilde h_2(y'))  \Big) \parallel (-D_{x'} \tilde{h}_2 ,1) \quad \mbox{on}~~\{y_n = -r^2\}.
\end{aligned}
\right.
$$
Using the ansatz \eqref{def_g} and solving for $\Theta$ and $\Xi$, we have
$$
\left\{
\begin{aligned}
\Theta &= \frac{1}{8r^6} [\varepsilon + \tilde h_1(y') - \tilde h_2(y')] D_{y'} [\tilde h_1(y') + \tilde h_2(y')],\\
\Xi &= \frac{1}{8r^4} [\varepsilon + \tilde h_1(y') - \tilde h_2(y')] D_{y'} [\tilde h_1(y') - \tilde h_2(y')].
\end{aligned}
\right.
$$
Note that the equation of $v$ involves second-order derivatives of $\Phi$, and hence involves third-order derivatives of $\tilde{h}_1$ and $\tilde{h}_2$. However, $\tilde{h}_1$ and $\tilde{h}_2$ are only $C^{1,1}$,  so we introduce the mollification \eqref{mollification} to overcome the lack of regularities. Here $\mu$ is chosen so that $\tilde h_i^\mu = \tilde h_i$ on $\{y_n = \pm r^2\}$, and the coefficients of the equation of $v$ have the desired estimates.

Throughout this section, unless specify otherwise, we use $C$ to denote positive constants that could be different from line to line, and depend only on $n$, $p$, $c_1$, and $c_2$, where $c_1$ and $c_2$ are defined in \eqref{fg_1} and \eqref{def:c_2}, respectively.

\begin{lemma}\label{Change_of_variable_lemma}
There exists an $r_0 > 0$ independent of $\varepsilon$, such that when $r \in (\sqrt{\varepsilon}, r_0]$ and $\Phi$ is given as \eqref{Phi}, then:
\begin{enumerate}
\item There exists a positive constant $C$ independent of $\varepsilon$ and $r$, such that
$$\frac{I}{C} \le D \Phi(y) \le C I, \quad y \in Q_{2r,r^2} \setminus Q_{r/4, r^2},$$
and hence $\Phi$ is invertible.
\item $$Q_{1.9r, r^2} \setminus Q_{0.35r, r^2} \subset \Phi^{-1}(\Omega_{ 2r} \setminus \Omega_{r/4}),$$
and
$$\Omega_{r} \setminus \Omega_{ r/2} \subset \Phi (Q_{1.1r, r^2} \setminus Q_{0.4r, r^2}).$$
\item Let $u \in W^{1,p}(\Omega_{ 2r} \setminus \Omega_{r/4})$ be a solution of
\begin{equation}\label{local_equation}
\left\{
\begin{aligned}
-\dv \Big( (\eta + |D u|^2)^{\frac{p-2}{2}} D u \Big) &=0 \quad \mbox{in }\Omega_{2r} \setminus \Omega_{r/4},\\
\frac{\partial u}{\partial \nu} &= 0 \quad \mbox{on } (\Gamma_+ \cup \Gamma_-) \cap \overline{\Omega_{2r} \setminus \Omega_{r/4}}\\
\end{aligned}
\right.
\end{equation}
for some $\eta > 0$, and $v(y) = u(\Phi(y))$. Then $v$ satisfies an elliptic equation
\begin{equation}\label{v_equation}
\left\{
\begin{aligned}
\tilde{a}^{ij} D_{ij}v(y) + \tilde{b}_i D_i v(y) &=0 \quad \mbox{in }Q_{1.9r, r^2} \setminus Q_{0.35r, r^2},\\
\frac{\partial v}{\partial \nu}(y) &= 0 \quad \mbox{on } \{y_n = \pm r^2\},\\
\end{aligned}
\right.
\end{equation}
with
$$
\frac{I}{C} \le \tilde{a} \le C I, \quad |\tilde{b}| \le \frac{C}{r}.
$$
\end{enumerate}
\end{lemma}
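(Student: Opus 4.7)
The plan is to verify the three assertions in sequence, each reducing to explicit bounds on $\Phi$ and its derivatives. Since $\tilde h_i\in C^{1,1}$ with $\tilde h_i(0)=0=D\tilde h_i(0)$, on $\{|y'|\le 2r\}$ we have $|\tilde h_i|\le Cr^2$ and $|D_{y'}\tilde h_i|\le Cr$, and the same bounds pass to the mollifications $\tilde h_i^\mu$; the symmetry of the mollifier further yields $|D_\mu\tilde h_i^\mu|\le C\mu$. Substituting these into the formulas for $\Theta$ and $\Xi$ and using $r\geq\sqrt\varepsilon$ so that $\varepsilon+\tilde h_1-\tilde h_2\le Cr^2$ gives $|g|\le Cr^3$, $|D_{y'}g|\le Cr^2$, and $|D_{y_n}g|\le Cr$. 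Combined with $|D_{y'}\Phi^n|\le Cr$ and the observation that $D_{y_n}\Phi^n=(\varepsilon+\tilde h_1-\tilde h_2)/(2r^2)$ lies between positive constants on $\{|y'|\ge r/4\}$ thanks to \eqref{fg_1}, the matrix $D\Phi$ is a small perturbation of a block-triangular matrix with positive diagonal blocks; for $r_0$ small both $D\Phi$ and $(D\Phi)^{-1}$ are uniformly bounded, proving (1).

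For (2), since $\mu=0$ on $\{y_n=\pm r^2\}$, $g$ vanishes there and one reads off $\Phi(y',r^2)=(y',\varepsilon/2+\tilde h_1(y'))\in\Gamma_+$, and similarly on the lower face. For $|y_n|<r^2$, $\Phi^n(y)$ interpolates in $y_n/r^2$ between the values on $\Gamma_\pm$, leaving a vertical margin $\tfrac12(\varepsilon+\tilde h_1-\tilde h_2)(1\mp y_n/r^2)$ which, using the refined bound $|g|\le C(r^4-y_n^2)/r$ and the Lipschitz bound $|D_{y'}\tilde h_i|\le Cr$, strictly dominates the horizontal displacement error $|\tilde h_i(y')-\tilde h_i(\Phi(y)')|\le Cr\,|g|$ for $r_0$ small; hence $\Phi(y)\in\widetilde\Omega$. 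Together with the buffer $|g|\le Cr^3\ll r$, the forward inclusion follows, and the reverse inclusion is obtained by first solving for $y_n$ explicitly from the $x_n$-equation, then treating $y'=x'+g(y)$ as a contraction in $y'$ (valid since $|D_{y'}g|\le Cr^2<1$).

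For (3), set $u=v\circ\Phi^{-1}$ and pull back the normalized equation $a^{ij}(Du)D_{ij}u=0$ to get
\[
\tilde a^{kl}=a^{ij}(\Phi^{-1})^k_{,i}(\Phi^{-1})^l_{,j},\qquad \tilde b^k=a^{ij}(\Phi^{-1})^k_{,ij}.
\]
Ellipticity $I/C\le\tilde a\le CI$ is immediate from \eqref{a_elliptic_2} and (1), since $\tilde a^{kl}\xi_k\xi_l=a^{ij}\eta_i\eta_j$ with $\eta=(D\Phi)^{-T}\xi$ of comparable length to $\xi$. The Neumann condition $D_{y_n}v=0$ on $\{y_n=\pm r^2\}$ transfers by direct computation: the specific formulas for $\Theta,\Xi$ are engineered so that at $\mu=0$, $D_{y_n}\Phi|_{y_n=r^2}$ is a positive scalar multiple of the unnormalized normal $(-D_{y'}\tilde h_1,1)$ to $\Gamma_+$ (analogously with $\tilde h_2$ on the lower face), so the homogeneous Neumann condition $Du\cdot\nu=0$ on $\Gamma_\pm$ yields $D_{y_n}v=Du\cdot D_{y_n}\Phi=0$. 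The main obstacle is the bound $|\tilde b|\le C/r$, which via the inverse function formula $|D^2\Phi^{-1}|\le C|D^2\Phi|$ reduces to showing $|D^2\Phi|\le C/r$. For $\Phi^n$ this is immediate. For $g$, the apparent singularity comes from $|D^3_{y'}\tilde h_i^\mu|\le C/\mu$, but the mollification scale $\mu=(r^4-y_n^2)/r$ is chosen precisely so that the vanishing factor $|y_n^2-r^4|=r\mu$ cancels the $1/\mu$ blow-up:
\[
|y_n^2-r^4|\cdot|y_n|\cdot|D^2_{y'}\Theta|\;\le\; r\mu\cdot r^2\cdot\frac{r^2}{r^6\mu}=\frac{1}{r},
\]
with an analogous estimate for the $\Xi$-contribution and for the mixed $D^2_{y_n y_\alpha}g$. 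This yields $|D^2g|\le C/r$ and hence $|\tilde b|\le C/r$, completing the proof.
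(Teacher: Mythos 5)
Your proposal is correct and follows essentially the same route as the paper for all three parts: the same derivative bounds $|D^k_{y'}\tilde h_i^\mu|\le Cr^{2-k}$, the same observation that $g$ vanishes on $\{y_n=\pm r^2\}$ so $\Phi$ maps the flat faces onto $\Gamma_\pm$, the same computation showing $D_{y_n}\Phi|_{y_n=\pm r^2}$ is parallel to the normal of $\Gamma_\pm$ (giving the Neumann condition for $v$), and the same cancellation of the $1/\mu$ blow-up of $D^3_{y'}\tilde h_i^\mu$ against the prefactor $r^4-y_n^2=r\mu$ to obtain $|D^2\Phi|\le C/r$. Two small remarks. First, your appeal to ``symmetry of the mollifier'' to get $|D_\mu\tilde h_i^\mu|\le C\mu$ is not justified (the paper does not assume $\varphi$ is even) and also not needed: the bound $|D_\mu D_{y'}\tilde h_i^\mu|\le C$ from Lipschitz continuity of $D_{y'}\tilde h_i$, combined with $|D_{y_n}\mu|\le 2r$, already gives $|D_{y_n}D_{y'}\tilde h_i^\mu|\le Cr$ and hence $|D_{y_n}g|\le Cr$. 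Second, for part (2) you actually argue more carefully than the paper does: the vertical-margin comparison using the refined bound $|g|\le C(r^4-y_n^2)/r$, which shows $\Phi(y)$ stays strictly between $\Gamma_-$ and $\Gamma_+$, is a genuine improvement over the paper's terse ``simply follows from $|g|\le Cr^3$,'' and your contraction-mapping sketch for the reverse inclusion is likewise sound once one notes $|D_{y'}g|+|D_{y_n}g|\cdot|D_{y'}y_n|\le Cr^2<1$.
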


\begin{proof}
By \eqref{fg_0}, we have
\begin{equation}\label{h_derivatives}
|D_{y'}^k \tilde{h}_{i}(y') | \le Cr^{2-k}, \quad |D_{y'}^k \tilde{h}^\mu_{i}(y') | \le Cr^{2-k} \quad \mbox{for}~~i = 1,2~~\mbox{and}~~k =0,1,2.
\end{equation}
Therefore, when $y \in Q_{2r,r^2} \setminus Q_{r/4, r^2}$,
$$
|D_{y'} x' - I_{(n-1)\times(n-1)}| = |D_{y'} g(y)| \le Cr^2.
$$
\begin{align*}
D_{y_n} x' =& -D_{y_n} g(y)\\
=& -2y_n(\Theta y_n + \Xi) - (y_n^2 - r^4)[(D_{y_n} \Theta) y_n + \Theta + D_{y_n}\Xi],
\end{align*}
and
$$
|\Theta| \le Cr^{-3}, \quad |\Xi| \le Cr^{-1}.
$$
By \eqref{mollification}, we have for $y \in Q_{2r,r^2} \setminus Q_{r/4, r^2}$,
\begin{align*}
|D_{y_n} D_{y'} \tilde{h}_i^\mu (y')| =& \left| - \frac{\partial \mu}{\partial y_n} \int_{\bR^{n-1}}D^2_{y'} \tilde{h}_i(y' - \mu z')z' \varphi(z') \, dz'\right| \\
=& \left| \frac{2y_n}{r} \int_{\bR^{n-1}}D^2_{y'} \tilde{h}_i(y' - \mu z')z' \varphi(z') \, dz' \right| \le Cr \quad \mbox{for}~~i=1,2.
\end{align*}
Therefore,
$$
|D_{y_n}\Theta| \le Cr^{-3}, \quad |D_{y_n} \Xi| \le C r^{-1},
$$
and hence
\begin{align*}
|D_{y_n} x'| \le& |2y_n(\Theta y_n + \Xi)| + |(y_n^2 - r^4)[(D_{y_n} \Theta) y_n + \Theta + D_{y_n}\Xi]|\\
\le& Cr + Cr^4 [r^{-1} + r^{-3} + r^{-1}] \le Cr.
\end{align*}
By \eqref{h_derivatives},
\begin{align*}
&|D_{y'}x_n| =  \frac{1}{2} \Big|\frac{y_n}{r^2} (D_{y'}\tilde h_1(y') - D_{y'}\tilde h_2(y')) + D_{y'}\tilde h_1(y') +D_{y'}\tilde h_2(y' ) \Big| \le Cr.
\end{align*}
And lastly,
\begin{align*}
D_{y_n} x_n = \frac{1}{2r^2} (\varepsilon + \tilde h_1(y') - \tilde h_2(y')).
\end{align*}
By \eqref{fg_1},
$$
\frac{1}{C} \le D_{y_n} x_n \le C.
$$
Then (a) follows by shrinking $r_0$ to be sufficiently small.

Since $g(y) = 0$ when $y = \pm r^2$, $\Phi$ maps the upper and lower boundaries of $Q_{2r, r^2} \setminus Q_{r/4, r^2}$ onto the upper and lower boundaries of $\Omega_{2r} \setminus \Omega_{r/4}$, respectively. Then (b) simply follows from the fact that $|g(y)| \le Cr^3$, and we can shrink $r_0$ so that $|g(y)| \le r/10$.

To verify (c), note that $u$ is smooth from the classical elliptic theory. We compute by the chain rule,
\begin{align*}
D_{x_k} u(x) =& D_{y_i} v(y) D_{x_k} y_i,\\
D_{x_k x_l} u(x) =&  D_{y_i}  D_{y_j } v(y) D_{x_k} y_i D_{x_l} y_j + D_{y_i} v(y) D_{x_k x_l} y_i.
\end{align*}
Recall that $u(x)$ satisfies the equation
$$
a^{kl} D_{x_k x_l} u(x) =0,
$$
where the matrix $a$ is given by \eqref{a_ij_2}. If we define
$$
\tilde{a}^{ij} := a^{kl} D_{x_k} y_i D_{x_l} y_j, \quad \tilde{b}^i := a^{kl} D_{x_k x_l} y_i,
$$
then $v(y)$ satisfies
$$
\tilde{a}^{ij} D_{ij}v(y) + \tilde{b}_i D_i v(y) =0.
$$
Next, we show that $v$ satisfies the Neumann boundary condition on $\{y_n = \pm r^2\}$. We will show the boundary condition on $\{y_n = r^2\}$, as the one on $\{y_n = -r^2\}$ follows similarly. By the chain rule,
$$
D_{y} v(y) \cdot e_n = D_x u(x)\cdot  D_y \Phi e_n,
$$
where $e_n:= (0,\ldots,0,1)$. Therefore, it suffices to show that
\begin{equation}\label{parallel}
 D_y \Phi e_n = \Big(-D_{y_n}g , \frac{1}{2r^2} (\varepsilon + \tilde h_1(y') - \tilde h_2(y'))  \Big) \parallel (-D_{x'} \tilde{h}_1 ,1) \quad \mbox{on}~~\{y_n = r^2\}.
\end{equation}
Note that when $y_n = r^2$, we have $g(y) = 0$, $y' = x'$, $\mu = 0$, and $\tilde{h}_1^\mu = \tilde{h}_1$. Therefore,
\begin{align*}
D_{y_n}g =& (y_n + r^2)(\Theta y_n + \Xi)\Big|_{y_n = r^2}\\
=& \frac{1}{2r^2} (\varepsilon + \tilde h_1(y') - \tilde h_2(y')) D_{y'} \tilde{h}_1^\mu (y')\\
=& \frac{1}{2r^2} (\varepsilon + \tilde h_1(y') - \tilde h_2(y')) D_{x'} \tilde{h}_1 (x').
\end{align*}
This implies \eqref{parallel}.

Finally, we show that the coefficients $\tilde{a}$ and $\tilde{b}$ satisfy the desired estimates. From part (a), we know that
$$
\frac{I}{C} \le D_x y = D_x \Phi^{-1}(x) \le C I,
$$
which together with \eqref{a_elliptic_2} implies that
$$
\frac{I}{C} \le \tilde{a} \le C I.
$$
To estimate $\tilde{b}$, we differentiate $\partial y_i/ \partial x_k \cdot \partial x_k / \partial y_j = \delta_{ij}$ in $x_l$. Note that by chain rule, we have
$$
\frac{\partial^2 y_i}{\partial x_k \partial x_l} \frac{\partial x_k}{\partial y_j} + \frac{\partial y_i}{\partial x_k} \frac{\partial y_m}{\partial x_l} \frac{\partial^2 x_k}{\partial y_j \partial y_m} = 0.
$$
Since $I/C \le D_x y \le CI$ and $I/C \le D_y x\le CI$, it suffices to estimate $D^2_y x$, which is $D^2_y \Phi(y)$. It is easy to see that
$$
\left| \frac{\partial^2 x_n}{\partial y^2} \right| \le \frac{C}{r}.
$$
To estimate $\partial^2 x'/ \partial y^2$, the key terms are
$$
D_{y'}^3 \tilde{h}_i^\mu (y'),\quad
D_{y_n}D_{y'}^2 \tilde{h}_i^\mu (y'),\quad
D_{y_n}^2D_{y'} \tilde{h}_i^\mu (y'),\quad i=1,2.
$$
By \eqref{mollification} and integration by parts, we have
\begin{align*}
D_{y'} \tilde{h}_i^\mu (y') =& \int_{\bR^{n-1}} D_{y'} \tilde h_i (y'  - \mu z') \varphi(z') \, dz' \\
=& - \frac{1}{\mu} \int_{\bR^{n-1}} D_{z'} \tilde h_i (y'- \mu z') \varphi(z') \, dz'\\
=& \frac{1}{\mu} \int_{\bR^{n-1}}  \tilde h_i (y' - \mu z') D_{z'}\varphi(z') \, dz'.
\end{align*}
Then
$$
|D_{y'}^3 \tilde{h}_i^\mu (y')| \le \frac{C \|h_i\|_{C^{1,1}}}{\mu} \le \frac{Cr}{r^4 - y_n^2}.
$$
Similarly,
\begin{align}\label{mollification_yn_derivative}
D_{y_n} \tilde{h}_i^\mu (y') =& \frac{2y_n}{r} \int_{\bR^{n-1}} D_{y'} \tilde h_i (y' - \mu z') \cdot z' \varphi(z') \, dz' \nonumber\\
=& \frac{2y_n}{\mu r} \int_{\bR^{n-1}} \tilde h_i (y' - \mu z') D_{z'} \cdot (z' \varphi(z')) \, dz',
\end{align}
so
\begin{align*}
|D_{y_n}D_{y'}^2 \tilde{h}_i^\mu (y')| = \left| \frac{2y_n}{\mu r} \int_{\bR^{n-1}} D_{y'}^2 \tilde h_i (y' - \mu z') D_{z'} \cdot (z' \varphi(z')) \, dz' \right| \le \frac{Cr^2}{r^4 - y_n^2}.
\end{align*}
Differentiating the first line of \eqref{mollification_yn_derivative} in $y_n$, we have
\begin{align*}
D_{y_n}^2 \tilde{h}_i^\mu (y' ) =& \frac{2}{r} \int_{\bR^{n-1}} \sum_{k=1}^{n-1} D_{y_k} \tilde h_i (y' - \mu z') z_k \varphi(z') \, dz'\\
&+ \frac{4y_n^2}{ r^2} \int_{\bR^{n-1}} \sum_{k,l=1}^{n-1} D_{y_k y_l} \tilde h_i  (y'  - \mu z')  z_k z_l \varphi(z') \, dz'\\
 =& \frac{2}{r} \int_{\bR^{n-1}} \sum_{k=1}^{n-1} D_{y_k} \tilde h_i (y' - \mu z') z_k \varphi(z') \, dz'\\
&+ \frac{4y_n^2}{\mu r^2} \int_{\bR^{n-1}} \sum_{k,l=1}^{n-1} D_{y_l} \tilde h_i  (y'  - \mu z') D_{z_k} \big(z_k z_l \varphi(z')\big) \, dz'.
\end{align*}
Therefore,
\begin{align*}
|D_{y_n}^2 D_{y'} \tilde{h}_i^\mu (y')| \le& \frac{2}{r} \int_{\bR^{n-1}} |D_{y'}^2 \tilde h_i (y' - \mu z')| |z' \varphi(z')| \, dz'\\
&+ \frac{4y_n^2}{\mu r^2} \int_{\bR^{n-1}} |D_{y'}^2 \tilde h_i  (y'  - \mu z')| | D_{z'} (z'\otimes z' \varphi(z'))| \, dz'\\
\le& \frac{Cr^3}{r^4 - y_n^2}.
\end{align*}
By these estimates above and straightforward computations, we have
$$
|D_{y}^2 \Phi(y)| \le Cr^{-1},
$$
which implies $|\tilde{b}| \le Cr^{-1}$.
\end{proof}

\begin{lemma}\label{harnack_inequality_lemma}
Let $r_0$ be as in Lemma \ref{Change_of_variable_lemma}, and let $r \in (\sqrt{\varepsilon}, r_0]$. If $u \in W^{1,p}(\Omega_{2r} \setminus \Omega_{r/4})$ is a nonnegative solution of \eqref{local_equation} for some $\eta > 0$,
then,
\begin{equation}\label{harnack}
\sup_{\Omega_{r} \setminus \Omega_{r/2}} u \le C \inf_{\Omega_{r} \setminus \Omega_{r/2}} u,
\end{equation}
for some constant $C >0$ depending only on $n$, $p$, $c_1$, and $c_2$, but independent of $\varepsilon$, $\eta$, $r$, and $u$.
\end{lemma}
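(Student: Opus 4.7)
The plan is to use Lemma \ref{Change_of_variable_lemma} to recast \eqref{local_equation} as a uniformly elliptic linear equation in nondivergence form on a flat annular cylinder with Neumann data on top and bottom, rescale so that the drift becomes $L^\infty$-bounded with a constant independent of $r$, remove the Neumann faces by even reflection, and then invoke the Krylov--Safonov Harnack inequality on a chain of balls of fixed size. Set $v(y) := u(\Phi(y))$ with $\Phi$ from \eqref{Phi}; by Lemma \ref{Change_of_variable_lemma}(c), $v \ge 0$ is a strong solution of $\tilde a^{ij} D_{ij} v + \tilde b^i D_i v = 0$ on $Q_{1.9r,r^2} \setminus Q_{0.35r,r^2}$, with $I/C \le \tilde a \le CI$, $|\tilde b| \le C/r$, and Neumann data on $\{y_n = \pm r^2\}$. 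Rescale by $\hat y = y/r$ and $\hat v(\hat y) := v(r\hat y)$; the chain rule produces
\[
\hat a^{ij}(\hat y) D_{\hat y_i \hat y_j} \hat v + \hat b^i(\hat y) D_{\hat y_i} \hat v = 0 \quad \text{in } Q_{1.9,r} \setminus Q_{0.35,r},
\]
where $\hat a(\hat y) = \tilde a(r \hat y)$ and $\hat b(\hat y) = r \tilde b(r \hat y)$, so $I/C \le \hat a \le CI$ and $|\hat b| \le C$ are uniform in $r$, with Neumann condition $\partial_n \hat v = 0$ on $\{\hat y_n = \pm r\}$.

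Next, extend $\hat v$ to $\{0.35 < |\hat y'| < 1.9\} \times \bR$ by repeated even reflection across $\{\hat y_n = \pm r\}$ (equivalently, the $4r$-periodic extension compatible with these reflections). The coefficients are extended accordingly: entries of $\hat a$ with both indices strictly less than $n$ or both equal to $n$, and $\hat b^i$ with $i < n$, are taken evenly in $\hat y_n$; the mixed entries $\hat a^{in}$, $\hat a^{ni}$ with $i < n$ and $\hat b^n$ are taken oddly. Since the reflection realises conjugation of $\hat a$ by $R := \operatorname{diag}(1,\dots,1,-1)$, the ellipticity constants and the pointwise bound $|\hat b| \le C$ are preserved. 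The Neumann condition ensures $\hat v$ is $C^1$ across each reflection face, so after extension $\hat v$ is a nonnegative strong solution (almost everywhere) of a uniformly elliptic nondivergence equation with bounded drift on the enlarged slab.

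Finally, fix $\rho = 1/16$ and choose a finite chain of balls $B_\rho(\hat z_k)$, $k = 1,\dots,N(n)$, centred on $\{|\hat y'| = 0.75\} \times \{\hat y_n = 0\}$, with consecutive balls overlapping and with $B_{2\rho}(\hat z_k) \subset \{0.3 < |\hat y'| < 1.9\} \times \bR$, hence inside the extended domain. Shrinking $r_0$ so that $r \le \rho$, the union $\bigcup_k B_\rho(\hat z_k)$ contains the rescaled image of $Q_{1.1r,r^2} \setminus Q_{0.4r,r^2}$. The Krylov--Safonov Harnack inequality applied on each $B_{2\rho}(\hat z_k)$ yields $\sup_{B_\rho(\hat z_k)} \hat v \le C_0 \inf_{B_\rho(\hat z_k)} \hat v$, where $C_0$ depends only on $n$, $p$, $c_1$, $c_2$ (through the ellipticity and the $L^\infty$-bound on $\hat b$). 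Chaining through the overlapping balls produces $\sup \hat v \le C_0^N \inf \hat v$ on the union, and by Lemma \ref{Change_of_variable_lemma}(b) and inversion of $\Phi$ this covers $\Omega_r \setminus \Omega_{r/2}$, proving \eqref{harnack}. The main obstacle is making the reflection step rigorous in the nondivergence setting: checking that the pieced-together $\hat v$ remains a strong solution across the reflection interfaces with the uniform ellipticity and drift bounds intact. A secondary subtlety arises when $n = 2$, where $\{0.4 \le |\hat y'| \le 1.1\}$ consists of two intervals and the chain argument must be run on each component separately with the same universal constant.
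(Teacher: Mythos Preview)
Your approach is essentially identical to the paper's: straighten via $\Phi$, rescale by $r$ so the drift becomes bounded, reflect evenly/oddly across the flat faces to remove the Neumann condition, and invoke Krylov--Safonov. The paper performs the extension before the rescaling and then applies the Harnack inequality directly on the fixed connected domain $Q_{1.9,2}\setminus Q_{0.35,2}$ (using that $n\ge 3$), rather than spelling out a ball chain; otherwise the arguments coincide.

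One small slip: with $\rho=1/16$ and all centres on $\{|\hat y'|=0.75\}$, the balls $B_\rho(\hat z_k)$ only cover $\{|\hat y'|\in(0.6875,0.8125)\}$ in the radial direction, which does not contain the rescaled set $\{0.4\le|\hat y'|\le 1.1,\ |\hat y_n|<r\}$ that you need. Either take a larger $\rho$ (e.g.\ $\rho\approx 0.4$, checking $B_{2\rho}\subset\{0.35<|\hat y'|<1.9\}\times\bR$ is not quite possible, so instead) distribute the centres over the whole annulus $\{0.4\le|\hat y'|\le 1.1\}$, or simply cite Krylov--Safonov for compact subsets of a connected domain as the paper does. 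This is a cosmetic fix and does not affect the structure of your argument.
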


\begin{proof}
We take the change of variable $y = \Phi^{-1}(x)$, where $\Phi$ is given as \eqref{Phi}. Let $v(y) = u(x)$. By Lemma \ref{Change_of_variable_lemma} (c), $v$ satisfies the equation \eqref{v_equation}.

For $i,j = 1,2,\ldots, n-1$, we take the even extension of $\tilde{a}^{ij}$, $\tilde{a}^{nn}$, $\tilde{b}^i$, and $v$ with respect to $y_n = r^2$, and take odd extension of $\tilde{a}^{in}$, $\tilde{a}^{ni}$, and $\tilde{b}^n$ with respect to $y_n = r^2$. Then we take the periodic extension (so that the period is equal to $4r^2$). We still denote them by $\tilde{a}$, $\tilde{b}$, and $v$ after the extension. Then $v$ satisfies
$$
\tilde{a}^{ij} D_{ij}v(y) + \tilde{b}_i D_i v(y) =0 \quad \mbox{in }Q_{1.9r, 2r} \setminus Q_{0.35r, 2r}.
$$
Setting $\bar{a}^{ij}(y) = \tilde{a}^{ij}(ry)$, $\bar{b}^{i}(y) = r \tilde{b}^{i}(ry)$, and $\bar{v}(y) = v(ry)$, we see that $\bar{v}$ satisfies
$$
\bar{a}^{ij} D_{ij}\bar v(y) + \bar{b}_i D_i \bar v(y) =0 \quad \mbox{in }Q_{1.9, 2} \setminus Q_{0.35, 2},
$$
with
$$
\frac{I}{C} \le \bar{a} \le C I, \quad |\bar{b}| \le C.
$$
Since $Q_{1.9, 2} \setminus Q_{0.35, 2}$ is connected when $n \ge 3$, by the Krylov-Safonov theorem (see Section 4.2 of \cite{Krylov}), we have
$$
\sup_{Q_{1.1, 1} \setminus Q_{0.4, 1}} \bar{v} \le C \inf_{Q_{1.1, 1} \setminus Q_{0.4, 1}} \bar{v}.
$$
This implies
$$
\sup_{Q_{1.1r, r^2} \setminus Q_{0.4r, r^2}} v \le C \inf_{Q_{1.1r, r^2} \setminus Q_{0.4r, r^2}} v.
$$
Finally, \eqref{harnack} follows by reverting the changes of variables and Lemma \ref{Change_of_variable_lemma} (b).
\end{proof}

The following estimate on the oscillation of $u$ is a direct consequence of Lemma \ref{harnack_inequality_lemma}.

\begin{corollary}\label{coro_3d}
For $n \ge 3$, let $u \in W^{1,p}(\Omega_{1})$ be a solution of \eqref{main_problem_approximate} for some $\eta > 0$. Then there exist positive constants $C$ and $\beta$, depending only on $n, p, c_1,$ and $c_2$, such that
\begin{equation}\label{oscillation}
\underset{\Omega_{r}}{\osc}~u \le C r^{\beta} \underset{\Omega_{1}}{\osc}~u, \quad \forall\, r \in (\sqrt{\varepsilon}, 1/2).
\end{equation}
\end{corollary}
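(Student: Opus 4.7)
The plan is to deduce the oscillation decay directly from the Harnack inequality of Lemma \ref{harnack_inequality_lemma} via a standard dyadic iteration, combined with the maximum principle to pass from the annular region $\Omega_r \setminus \Omega_{r/2}$ (where the Harnack estimate lives) to the full slab $\Omega_r$.

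First, for $r \in (\sqrt{\varepsilon}, r_0]$, I set $M := \sup_{\Omega_{2r} \setminus \Omega_{r/4}} u$ and $m := \inf_{\Omega_{2r} \setminus \Omega_{r/4}} u$. Both $M - u$ and $u - m$ are nonnegative on $\Omega_{2r} \setminus \Omega_{r/4}$, and because the $p$-Laplace flux $v \mapsto (\eta + |Dv|^2)^{(p-2)/2}\,Dv$ is odd in $Dv$, they satisfy the same equation \eqref{local_equation} with the same homogeneous conormal condition on $(\Gamma_+ \cup \Gamma_-) \cap \overline{\Omega_{2r} \setminus \Omega_{r/4}}$. Applying Lemma \ref{harnack_inequality_lemma} to each and adding the resulting inequalities in the standard way yields
\[
\underset{\Omega_r \setminus \Omega_{r/2}}{\osc}~u \;\le\; \gamma \, \underset{\Omega_{2r}}{\osc}~u, \qquad \gamma := \frac{C_H - 1}{C_H + 1} \in (0,1),
\]
where $C_H$ is the Harnack constant from that lemma.

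Next I promote this to an oscillation decay on all of $\Omega_r$. The solution $u = u_\eta$ satisfies the uniformly elliptic non-divergence equation $a^{ij} D_{ij} u = 0$, with $a$ given by \eqref{a_ij_2} and bounded by \eqref{a_elliptic_2}. The classical maximum principle together with Hopf's lemma applied on the $C^{1,1}$ surfaces $\Gamma_\pm$ — where the homogeneous Neumann condition forces the outer normal derivative to vanish — shows that $\sup_{\overline{\Omega_r}} u$ and $\inf_{\overline{\Omega_r}} u$ are both attained on the lateral boundary $\{|x'| = r\} \cap \overline{\Omega_r}$, which lies inside $\overline{\Omega_r \setminus \Omega_{r/2}}$. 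Hence
\[
\underset{\Omega_r}{\osc}~u \;\le\; \underset{\overline{\Omega_r \setminus \Omega_{r/2}}}{\osc}~u \;\le\; \gamma \, \underset{\Omega_{2r}}{\osc}~u \quad \text{for all } r \in (\sqrt{\varepsilon}, r_0/2].
\]

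Finally, iterating this dyadic decay from $r_0$ down to the scale $\sqrt{\varepsilon}$ produces $\underset{\Omega_{2^{-k}r_0}}{\osc}~u \le \gamma^{k}\,\underset{\Omega_{r_0}}{\osc}~u$ as long as $2^{-k} r_0 > \sqrt{\varepsilon}$, which translates to \eqref{oscillation} with $\beta := \log_2(1/\gamma) > 0$ after absorbing the ranges $r \in [r_0, 1/2)$ and the residual fractional step near $\sqrt{\varepsilon}$ into the constant $C$. The main subtlety I anticipate is verifying cleanly that $M - u$ and $u - m$ genuinely inherit both the PDE and the Neumann boundary condition (resting on the odd structure of the $p$-Laplace flux), and that the strong maximum principle / Hopf argument goes through on the $C^{1,1}$ surfaces $\Gamma_\pm$ for the normalized equation; both are standard given the uniform ellipticity \eqref{a_elliptic_2}, so I do not expect this to be a serious obstacle.
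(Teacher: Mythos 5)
Your argument is correct and follows essentially the same route as the paper: Harnack inequality on the annular slab (Lemma \ref{harnack_inequality_lemma}) combined with the maximum principle / Hopf lemma to push the sup and inf to the lateral boundary, then dyadic iteration. The only cosmetic difference is that you apply Harnack to both $M-u$ and $u-m$ and add, obtaining the decay factor $(C_H-1)/(C_H+1)$, whereas the paper applies Harnack once to $u - \inf_{\Omega_{2r}} u$ and obtains $(C_1-1)/C_1$; both give an $\varepsilon$-independent $\gamma\in(0,1)$ and hence a positive $\beta$. Your observations that $M-u$ and $u-m$ inherit the equation and the conormal condition by the odd structure of the flux, and that Hopf applies on the $C^{1,1}$ surfaces $\Gamma_\pm$ for the uniformly elliptic normalized equation, are both valid and are the same ingredients implicitly used in the paper's invocation of the maximum principle.
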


\begin{proof}
It suffices to prove \eqref{oscillation} for $r \in (\sqrt{\varepsilon}, r_0]$, where $r_0$ is the same as in Lemma \ref{Change_of_variable_lemma}. Let $\sqrt{\varepsilon} < r \le r_0$ and $v = u - \inf_{\Omega_{2r}} u$. Then $v \ge 0$ in $\Omega_{2r}$. By Lemma \ref{harnack_inequality_lemma}, we have
\begin{align*}
\sup_{\Omega_r \setminus \Omega_{r/2}} v \le C_1 \inf_{\Omega_r \setminus \Omega_{r/2}} v,
\end{align*}
where $C_1 > 1$ is a constant independent of $r$. Since $v$ satisfies equation \eqref{main_problem_narrow}, by the maximum principle,
\begin{align*}
\sup_{\Omega_r \setminus \Omega_{r/2}} v = \sup_{\Omega_r} v, \quad \inf_{\Omega_r \setminus \Omega_{r/2}} v = \inf_{\Omega_r} v.
\end{align*}
Therefore,
\begin{align*}
\sup_{\Omega_r} v \le C_1 \inf_{\Omega_r} v,
\end{align*}
which implies
$$
\sup_{\Omega_r} u \le C_1 \inf_{\Omega_{r}} u - (C_1 -1) \inf_{\Omega_{2r}} u.
$$
Adding the above inequality with
$$
(C_1 - 1) \sup_{\Omega_r} u \le (C_1 - 1) \sup_{\Omega_{2r}} u,
$$
and dividing both sides by $C_1$, we have
$$\underset{\Omega_{r}}{\osc}~u \le \frac{C_1 - 1}{C_1} \underset{\Omega_{2r}}{\osc}~u.$$
Finally, \eqref{oscillation} follows from iterating the inequality above.
\end{proof}

Now we are ready to prove Theorem \ref{Theorem_improved}.
\begin{proof}[Proof of Theorem \ref{Theorem_improved}]
It suffices to show \eqref{gradient_-1/2+beta} for $x \in \Omega_{1/8}$ and $\varepsilon\in(0,1/32)$. By Corollary \ref{coro_3d}, there exist positive constants $C$ and $\beta$, depending only on $n, p, c_1,$ and $c_2$, such that
$$
\underset{\Omega_{8\eta}}{\osc}~u \le C (\varepsilon + |x'|^2)^{\beta} \underset{\Omega_{1}}{\osc}~u,
$$
where $\eta=\frac{1}{4}(\varepsilon+|x'|^2)^{\frac{1}{2}}$. Then by Theorem \ref{thm-1/2}, we have
\begin{align*}
|D u(x)| \le & C (\varepsilon+|x'|^2)^{-\frac{1}{2}}\underset{\Omega_{x,\eta}}{\osc}~u\\ \le & C (\varepsilon+|x'|^2)^{-\frac{1}{2}} \underset{\Omega_{8\eta}}{\osc}~u \\
\le & C (\varepsilon + |x'|^2)^{-\frac{1}{2} + \beta} \underset{\Omega_{1}}{\osc}~u.
\end{align*}
The theorem is proved.
\end{proof}

\section{The \texorpdfstring{$p > n + 1$}{p > n + 1} case}\label{sec4}
In this section, we establish a more explicit gradient estimate for the equation  \eqref{main_problem_narrow} when $p > n + 1$, with a blow-up rate of order $\varepsilon^{-\alpha}$ for any $\alpha > \frac{n}{2(p-1)}$. Throughout this section, in addition to \eqref{fg_0} and \eqref{fg_1}, we need to further assume that $h_1$ and $h_2$ are $C^{2}$ strictly convex and strictly concave functions respectively, satisfying \eqref{fg_convex}. Let $\nu$ denote the normal vector on $\Gamma_{\pm}$, pointing upwards and downwards respectively.

To obtain the improved gradient estimate, in the following lemma, we construct a supersolution to show that the oscillation of $u$ enjoys a better decay rate. Then the desired gradient estimate  \eqref{gradient_2d} follows by using Theorem \ref{thm-1/2}.
\begin{lemma}\label{supersolution_lemma}
Let $n\geq 2$, $p > n+1$, $\Gamma_+, \Gamma_-, h_1, h_2$ be as above. For any $\delta \in(0, p-n-1)$, let $v(x) = (|x'|^2  + (2+\delta)x_n^2)^{\gamma/2}$. Then for any $\gamma \in(0,\frac{p-n-1-\delta}{p-1})$, there exists a constant $\mu\in(0,1/2)$ depending only on $n$, $p$, $\delta$, $\gamma$,  $\kappa_1$, $\kappa_2$, and the modulus of continuity for $D^2h_1(x')$ and $D^2h_2(x')$ at $x' = 0$,  such that for any  $\varepsilon\in(0,\mu^2/\kappa_2)$,
$$
\left\{
\begin{aligned}
-\dv (|D v|^{p-2} D v) &> 0 \quad \mbox{in }\Omega_{\mu/\kappa_2}\setminus \Omega_{\varepsilon/\mu} ,\\
\frac{\partial v}{\partial \nu} &> 0 \quad \mbox{on } (\Gamma_+ \cup \Gamma_-) \cap \overline{\Omega}_{\mu/\kappa_2}.\\
\end{aligned}
\right.
$$
\end{lemma}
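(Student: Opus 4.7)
The plan is a direct pointwise computation with the anisotropic ansatz $v = \phi(w)$, where $\phi(t) = t^{\gamma/2}$ and $w(x) = |x'|^2 + (2+\delta)x_n^2$. The coefficient $2+\delta$ in front of $x_n^2$ is tuned to produce a strictly positive conormal derivative on the curved boundary, while the upper bound on $\gamma$ is exactly what makes the interior $p$-Laplace inequality hold on the equatorial slice $\{x_n = 0\}$. By the chain rule,
\begin{equation*}
|Dv|^2 \Delta v + (p-2) D^2 v(Dv, Dv) = (p-1)(\phi')^2 \phi'' |Dw|^4 + (\phi')^3 \bigl[|Dw|^2 \Delta w + (p-2) D^2 w(Dw, Dw)\bigr];
\end{equation*}
since $\gamma < 1$, $\phi'' < 0$, so the first term is negative and the second positive, and we must show the former dominates.

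For the interior inequality, set $a = |x'|^2$ and $c = x_n^2$, so that $w = a + (2+\delta)c$, $|Dw|^2 = 4[a + (2+\delta)^2 c]$, $\Delta w = 2(n+1+\delta)$, and $D^2 w(Dw, Dw) = 8[a + (2+\delta)^3 c]$. After dividing by the positive factor $|Dv|^{p-4}(\phi')^3/(2w)$, the requirement $-\dv(|Dv|^{p-2}Dv) > 0$ reduces to
\begin{equation*}
E(a,c) := (p-1)(\gamma - 2)\bigl[a + (2+\delta)^2 c\bigr]^2 + \bigl[a + (2+\delta) c\bigr]\bigl[(n+p-1+\delta)a + (2+\delta)^2(n+2p-3+(p-1)\delta)c\bigr] < 0.
\end{equation*}
At $c = 0$ this equals $\tau a^2$ with $\tau := (p-1)\gamma - (p-n-1-\delta) < 0$ by the hypothesis on $\gamma$. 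Geometrically, for $x \in \Omega_{\mu/\kappa_2}\setminus \Omega_{\varepsilon/\mu}$, the bounds $|x'| \geq \varepsilon/\mu$ and $|x_n| \leq \varepsilon/2 + \kappa_2 |x'|^2/2$ together with $\varepsilon < \mu^2/\kappa_2$ yield $|x_n|/|x'| \leq \mu$, hence $c \leq \mu^2 a$. A routine expansion gives $|E(a,c) - E(a,0)| \leq C(n,p,\delta)\mu^2 a^2$, so choosing $\mu$ small relative to $|\tau|$ forces $E(a,c) \leq \tau a^2 / 2 < 0$ throughout the annular region.

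For the boundary inequality on $\Gamma_+$, the inner normal to $\mathcal{D}_1$ is $\nu = (-D_{x'}h_1, 1)/\sqrt{1 + |D_{x'}h_1|^2}$, and one computes
\begin{equation*}
\frac{\partial v}{\partial \nu} = \frac{\gamma w^{\gamma/2 - 1}}{\sqrt{1+|D_{x'}h_1|^2}}\bigl[(2+\delta) x_n - D_{x'} h_1(x') \cdot x'\bigr].
\end{equation*}
On $\Gamma_+$, $x_n = \varepsilon/2 + h_1(x')$, and the $C^2$ expansion at the origin gives $D_{x'}h_1(x') \cdot x' = \langle D^2 h_1(0) x', x'\rangle + o(|x'|^2) = 2 h_1(x') + o(|x'|^2)$. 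Substituting, the bracket becomes
\begin{equation*}
(2+\delta)\varepsilon/2 + \delta\, h_1(x') + o(|x'|^2) \geq (2+\delta)\varepsilon/2 + (\delta \kappa_1/2)|x'|^2 + o(|x'|^2).
\end{equation*}
Here is where the choice $2+\delta$ rather than $2$ matters: without the $\delta$, only the $\varepsilon$-term would survive, which is far too small for $|x'| \gg \sqrt{\varepsilon}$. For $|x'| \leq \mu/\kappa_2$ with $\mu$ small enough depending on the modulus of continuity of $D^2 h_1$ at $0$, the $o(|x'|^2)$ term is absorbed into $(\delta\kappa_1/4)|x'|^2$, giving strict positivity; the argument on $\Gamma_-$ is symmetric and uses the concavity of $h_2$. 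The main obstacle is essentially bookkeeping—tracking how $\mu$ must depend simultaneously on $n, p, \delta, \gamma, \kappa_1, \kappa_2$, and the moduli of continuity of $D^2 h_i$ so that the interior smallness ($\mu^2 C(n,p,\delta) < |\tau|/2$) and the boundary smallness (modulus error dominated by $\delta\kappa_1/4$) can be realized with a single $\mu$.
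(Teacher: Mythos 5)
Your proof is correct and follows essentially the same strategy as the paper's: you reduce the sign of $-\dv(|Dv|^{p-2}Dv)$ to the negativity of a quadratic form $E(a,c)$ in $a=|x'|^2$, $c=x_n^2$ whose coefficient of $a^2$ (i.e., of $|x'|^4$) is exactly $(p-1)\gamma-(p-n-1-\delta)<0$, localize to the slab $|x_n|\leq\mu|x'|$ using the geometry of the narrow region, and for the conormal derivative use the Taylor-remainder inequality $(2+\delta)h_1-\langle Dh_1,x'\rangle\geq\tfrac{\kappa_1\delta}{4}|x'|^2$ coming from strict convexity. The paper writes the interior computation as a homogeneous quartic in $(|x'|,x_n)$ rather than a quadratic form in $(a,c)$, and uses explicit quantitative Taylor remainders in place of your $o(|x'|^2)$ bookkeeping, but the underlying argument is the same.
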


\begin{proof}
We denote $R(x) = (|x'|^2 + (2+\delta)x_n^2)^{1/2}$, so that $v(x) = R(x)^\gamma$.
Using Taylor expansion up to order 2, from \eqref{fg_0} we know that for any $|x'|<1$.
\begin{align}\label{taylor}
    h_1(x')=\big\langle \int_{0}^1 (1-t) D^2h_1(tx')dt \cdot x',x'\big\rangle,  \quad h_2(x')=\big\langle \int_{0}^1 (1-t) D^2h_2(tx')dt \cdot x',x'\big\rangle.
\end{align}
By \eqref{taylor} and \eqref{fg_convex}, we have
\begin{equation}\label{fg_l_u}
 \frac{\kappa_1}{2}|x'|^2 \le  h_1(x') \le \frac{\kappa_2}{2}|x'|^2, \quad  \frac{\kappa_1}{2}|x'|^2 \le  h_2(x') \le \frac{\kappa_2}{2}|x'|^2  \quad\mbox{for}~~|x'|< 1.
\end{equation}
We also note that for any $|x'|<1$,
\begin{align}\label{x.D}
    \big\langle Dh_1(x'), x'\big\rangle =\big\langle\int_{0}^1 \frac{d}{dt} Dh_1(tx')dt,x'\big\rangle=\big\langle\int_{0}^1 D^2h_1(tx')dt\cdot x',x'\big\rangle,
\end{align}
and similarly
\begin{align*}
    \big\langle Dh_2(x'), x'\big\rangle =\big\langle\int_{0}^1 D^2h_2(tx')dt\cdot x',x'\big\rangle.
\end{align*}
Since $h_1$ and $h_2$ are $C^2$, for any $\delta\in(0,p-n-1)$, there is a sufficiently small $r_0\in(0,1/2)$ depending only on $n$, $\delta$, $\kappa_1$, and the modulus of continuity for $D^2h_1(x')$ and $D^2h_2(x')$ at $x' = 0$, such that
\begin{align}\label{D2-cont}
    |D^2h_1(x')-D^2h_1(0)|\leq\frac{\kappa_1 \delta}{8+2\delta},\quad |D^2h_2(x')-D^2h_2(0)|\leq \frac{\kappa_1 \delta}{8+2\delta} \quad\mbox{for}~~|x'|\leq r_0.
\end{align}
Thus by \eqref{taylor}, \eqref{x.D}, \eqref{D2-cont},  \eqref{fg_convex}, and the triangle inequality, we obtain
\begin{equation}
\begin{aligned}\label{tay-x.D}
    &(2+\delta)h_1(x')-\big\langle Dh_1(x'), x'\big\rangle\\&=\big\langle \int_{0}^1 (2+\delta) (1-t) D^2h_1(tx')dt \cdot x',x'\big\rangle-\big\langle\int_{0}^1 D^2h_1(tx')dt\cdot x',x'\big\rangle\\
    &\geq \big\langle \int_{0}^1 (2+\delta)(1-t) D^2h_1(0)dt \cdot x',x'\big\rangle-\frac{2+\delta}{2}\frac{\kappa_1\delta}{8+2\delta}|x'|^2\\
    &\quad -\big\langle\int_{0}^1 D^2h_1(0)dt\cdot x',x'\big\rangle - \frac{\kappa_1\delta}{8+2\delta}|x'|^2\\
    &=\frac{\delta}{2}\langle D^2h_1(0)\cdot x',x'\rangle- \frac{\kappa_1\delta}{4} |x'|^2\geq \frac{\kappa_1\delta}{4} |x'|^2\geq 0.
\end{aligned}
\end{equation}
By direct computations, we have
\begin{align*}
D v &= \gamma R^{\gamma-2} (x', (2+\delta)x_n),\\
D_{ii} v&= \gamma R^{\gamma-2} + \gamma(\gamma-2)R^{\gamma-4}x_i^2, \quad \text{for}\; i\in\{1,2,\ldots,n-1\},\\
D_{ij}v &= \gamma(\gamma-2)R^{\gamma-4}x_ix_j, \quad \text{for}\; i\neq j, \; i,j\in\{1,2,\ldots,n-1\},\\
D_{in}v &= \gamma(\gamma-2)R^{\gamma-4}(2+\delta)x_i x_n , \quad \text{for}\; i\in\{1,2,\ldots,n-1\},\\
D_{nn}v &= \gamma R^{\gamma-2}(2+\delta) + \gamma(\gamma-2)R^{\gamma-4}(2+\delta)^2x_n^2.
\end{align*}
On $\Gamma_+  \cap \overline{\Omega}_{r_0}$,
$$
\nu = \frac{1}{\sqrt{1+|Dh_1(x')|^2}} (-Dh_1(x'),1).
$$
Then by \eqref{tay-x.D},
\begin{align*}
\frac{\partial v}{\partial \nu}=& \frac{\gamma R^{\gamma-2}}{\sqrt{1+(Dh_1(x'))^2}} \Big[ -\langle Dh_1(x'), x'\rangle  + (2 + \delta)(\frac{\varepsilon}{2} + h_1(x')) \Big]\\
\ge &   \frac{\gamma R^{\gamma-2}}{\sqrt{1+(Dh_1(x'))^2}} \Big[ \frac{\kappa_1\delta}{4}  |x'|^2 + (1+ \frac{\delta}{2}) \varepsilon \Big]> 0.
\end{align*}
A similar computation shows that $\frac{\partial v}{\partial \nu} > 0$ on $\Gamma_-  \cap \overline{\Omega}_{r_0}$.

Next, we compute in $\Omega_1\setminus\{0\}$,
\begin{align*}
&\dv (|D v|^{p-2} D v) |D v|^{4-p} = |D v|^2 \Delta v + (p-2)D_i v D_j v D_{ij} v\\
&= \gamma^2 R^{2\gamma - 4} \Big( |x'|^2  + (2+\delta)^2x_n^2 \Big)\Big( (n+1+\delta)\gamma R^{\gamma-2} \\
&\quad\quad + \gamma(\gamma-2)R^{\gamma-4}(|x'|^2 + (2+\delta)^2x_n^2)\Big)\\
&\quad + (p-2) \gamma^2 R^{2\gamma - 4} |x'| ^2 \Big( \gamma R^{\gamma-2} + \gamma(\gamma-2)R^{\gamma-4}|x'|^2 \Big)\\
&\quad +  2(p-2)(2+\delta)^2\gamma^2 R^{2\gamma - 4}\gamma(\gamma-2)R^{\gamma-4}|x'|^2 x_n^2\\
&\quad + (p-2) \gamma^2 R^{2\gamma - 4} (2+\delta)^2x_n^2 \Big( \gamma R^{\gamma-2}(2+\delta) + \gamma(\gamma-2)R^{\gamma-4}(2+\delta)^2x_n^2 \Big)\\
&=\gamma^3 R^{3\gamma-8} \Big\{|x'|^4\big[n+\delta+(p-1)(\gamma-1)\big]\\&\qquad+|x'|^2x_n^2 \big[(2+\delta)(n+\delta+p-1)\\
&\quad\quad\quad+(2+\delta)^2(n+1+\delta+(p-2)(2+\delta)+2(\gamma-2)(p-1))\big]\\
&\quad\quad
+x_n^4\big[(2+\delta)^3(n+1+\delta+(p-2)(2+\delta)+(\gamma-2)(p-1)(2+\delta))\big]\Big\}.
\end{align*}
Thus $\dv (|D v|^{p-2} D v) |D v|^{4-p} \gamma^{-3} R^{8-3\gamma}$ is a 4th order homogeneous polynomial of $|x'|$ and $x_n$.
Since $\gamma \in(0,\frac{p-n-1-\delta}{p-1})$,
we have
$$
n+\delta+(p-1)(\gamma-1)< 0.
$$
Therefore there exists a sufficiently small $\mu_0\in(0,1/2)$ depending only on $n$, $p$, $\gamma$, and  $\delta$, such that if $|x_n|\leq \mu_0 |x'|$ and $x\neq 0$, we have
$$\dv (|D v|^{p-2} D v)>0.$$
We then take
$ \mu=\min\{\mu_0, \kappa_2 r_0\}$, so that $\mu/\kappa_2 \leq r_0$ and thus $\frac{\partial v}{\partial \nu} > 0 \quad \mbox{on } (\Gamma_+ \cup \Gamma_-) \cap \overline{\Omega}_{\mu/\kappa_2}$.
 Note that when $x\in \Omega_{\mu/\kappa_2}\setminus \Omega_{\varepsilon/\mu}$,
 by \eqref{fg_l_u} we have
 $$
 |x_n|\leq \frac{\varepsilon}{2}+\frac{\kappa_2}{2} |x'|^2\leq \mu |x'|\leq \mu_0 |x'|.
 $$
 This concludes the proof.
\end{proof}

\begin{proof}[Proof of Theorem \ref{thm:2d}]
It suffices to show \eqref{gradient_2d} for $\delta\in(0, (p-n-1)/2)$, $x \in \Omega_{\mu/2\kappa_2}$, and $\varepsilon\in(0,\mu^2/(\kappa_2+\kappa_2^2))$, where $\mu\in(0,1/2)$ is defined in Lemma \ref{supersolution_lemma} with $\gamma = \frac{p-1-n-2\delta}{p-1}\in(0,1)$. Without loss of generality, we may assume that $u(0) = 0$ and ${\osc_{\Omega_1}}u = 1$. By Theorem  \ref{thm-1/2},
\begin{equation}\label{less-eps}
|u(x))| \le C \sqrt{\varepsilon} \quad \mbox{for}~~x\in \overline{\Omega}_{\varepsilon/\mu}.
\end{equation}
Let $v$ be the function defined in Lemma \ref{supersolution_lemma} with $\gamma = \frac{p-1-n-2\delta}{p-1}$ and $v_1=v+\sqrt{\varepsilon}$. Note that $u \le Cv$ and $-u \le Cv$ on $(\{|x'| =\varepsilon/\mu\} \cup \{ |x'| = \mu/\kappa_2 \}) \cap \overline{\Omega}_{1}$ for some $\varepsilon$-independent constant $C$. By the comparison principle, we have
$|u| \le Cv$ in $\Omega_{\mu/\kappa_2}\setminus \Omega_{\varepsilon/\mu}$.
In particular, we have
\begin{align}\label{more-eps}
|u(x)| \le C(\varepsilon^{2} + |x'|^2)^{\frac{p-1-n-2\delta}{2(p-1)}}+C\varepsilon^{1/2} \quad \mbox{for}~~x \in \Omega_{\mu/\kappa_2}\setminus \Omega_{\varepsilon/\mu}.
\end{align}
Since $ \frac{p-1-n-2\delta}{2(p-1)}\in(0,1/2)$, by combining \eqref{less-eps} and \eqref{more-eps}, we obtain
\begin{equation*}
|u(x)| \le C(\varepsilon + |x'|^2)^{\frac{p-1-n-2\delta}{2(p-1)}} \quad \mbox{for}~~x \in \Omega_{\mu/\kappa_2}.
\end{equation*}
This implies that for any $x \in \Omega_{\mu/2\kappa_2}$ and $\varepsilon\in(0,\mu^2/(\kappa_2+\kappa_2^2))$,
\begin{equation}\label{oscillation_2d}
\underset{\Omega_{x,\eta}}{\osc}~u \le C(\varepsilon + |x'|^2)^{\frac{p-1-n-2\delta}{2(p-1)}},
\end{equation}
where $\eta = \frac{1}{4}(\varepsilon + |x'|^2)^{1/2}$, and  $C$ is a constant depending only on $n$, $p$, $\delta$, $\kappa_1$, $\kappa_2$, and the modulus of continuity for $D^2h_1(x')$ and $D^2 h_2(x')$ at $x'=0$.
Then \eqref{gradient_2d} follows from \eqref{oscillation_2d} and Theorem \ref{thm-1/2}.
\end{proof}

\section{A two dimensional example}\label{sec5}

 In this section, we provide an example showing that the estimates \eqref{gradient-1/2} and \eqref{gradient_2d} are close to optimal in 2D. In the following and throughout this section, we set our domain $\Omega = B_5 \subset \bR^2$, $D_1$ and $D_2$ to be the unit balls centered at $(0,1+\varepsilon/2)$ and $(0, -1-\varepsilon/2)$, respectively. That is,
\begin{equation}\label{Gamma_pm_2d_balls}
\Gamma_+ = \left\{ x_2 = \frac{\varepsilon}{2}+ 1 - \sqrt{1-x_1^2} \right\},~ \Gamma_- = \left\{ x_2 = -\frac{\varepsilon}{2}- 1 + \sqrt{1-x_1^2}\right\},~x_1 \in (-1,1).
\end{equation}
\begin{lemma}\label{subsolution_lemma}
Let $n=2$, $\Gamma_+$ and $\Gamma_-$ be as \eqref{Gamma_pm_2d_balls}, for any $\delta \in(0, 1/2)$, $\varepsilon \in(0,\delta/10)$, and $\gamma > \max\{ \frac{p-3+\delta}{p-1},0\}$, there exists a constant $r_0\in(0,1)$ depending only on $\delta$, such that the function
$$
w(x):= \Big[\Big(x_1^2 + (2-\delta)x_2^2\Big)^{\frac{\gamma}{2}} - (4 \sqrt{\varepsilon/\delta})^{\gamma} \Big]_+
$$
satisfies
\begin{equation}\label{def:super}
\left\{
\begin{aligned}
-\dv (|D w|^{p-2} D w) &\le 0 \quad \mbox{in }\Omega_{r_0},\\
\frac{\partial w}{\partial \nu} &\le 0 \quad \mbox{on } (\Gamma_+ \cup \Gamma_-) \cap \overline{\Omega}_{r_0}.\\
\end{aligned}
\right.
\end{equation}
\end{lemma}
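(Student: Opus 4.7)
I would set $R(x) := (x_1^2 + (2-\delta)x_2^2)^{1/2}$, $v(x) := R(x)^{\gamma}$, and $c := (4\sqrt{\varepsilon/\delta})^\gamma$, so that $w = [v-c]_+$. The plan is to verify two pointwise inequalities on the open set $\{v > c\} \cap \Omega_{r_0}$ and then combine them: (i) $\dv(|Dv|^{p-2}Dv) \ge 0$ on $\{v > c\} \cap \Omega_{r_0}$, and (ii) $\partial v/\partial \nu \le 0$ on $\{v > c\} \cap (\Gamma_+ \cup \Gamma_-) \cap \overline{\Omega}_{r_0}$. Since $v - c$ satisfies the same $p$-Laplace equation as $v$, statement (i) makes $v-c$ a classical subsolution on its positivity set; because $0$ is trivially a subsolution and the pointwise maximum of two weak subsolutions of $-\dv(|D\cdot|^{p-2}D\cdot) = 0$ is again a weak subsolution, $w = \max(v-c, 0)$ then satisfies \eqref{def:super}. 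On the complement $\{v \le c\}$ the function $w$ vanishes in a neighborhood and both conditions are trivial, which reduces the lemma to (i) and (ii).

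For (i), I would essentially repeat the computation in the proof of Lemma \ref{supersolution_lemma} with $n = 2$ and with the factor $(2+\delta)$ replaced everywhere by $c := 2-\delta$, producing
$$\dv(|Dv|^{p-2}Dv)\,|Dv|^{4-p} = \gamma^3 R^{3\gamma - 8}\, Q(x_1^2, x_2^2),$$
where $Q$ is a quadratic form in $(A,B) = (x_1^2, x_2^2)$ whose coefficient of $A^2$ works out to $(p-1)\gamma - (p-3+\delta)$; this is strictly positive by the hypothesis $\gamma > \max\{(p-3+\delta)/(p-1),\,0\}$. To show that $Q$ is non-negative where it counts, I would exploit the geometric constraint $|x_2| \le \varepsilon/2 + h_1(x_1) \le \varepsilon/2 + x_1^2$ available in $\Omega_{r_0}$ for $r_0 \le 1/\sqrt{2}$. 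On $\{v > c\}$ one has $x_1^2 + (2-\delta) x_2^2 \ge 16\varepsilon/\delta$, and combining this with $\varepsilon < \delta/10$ and $\delta < 1/2$ via an elementary contradiction argument forces $x_1^2 \ge 8\varepsilon/\delta$. Then $|x_2| \le 2 x_1^2$, so $B/A \le 4 r_0^2$; since the remaining coefficients of $Q$ depend only on $p,\gamma,\delta$, the positive leading $A^2$ term dominates for $r_0$ small enough, giving $Q(A,B) \ge 0$.

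For (ii), I would parametrize $\Gamma_+$ by $x_2 = \varepsilon/2 + h_1(x_1)$ with $h_1(x_1) = 1-\sqrt{1-x_1^2}$; the inward unit normal is $\nu = (1+h_1'(x_1)^2)^{-1/2}(-h_1'(x_1), 1)$ and the chain rule gives
$$\frac{\partial v}{\partial \nu} = \frac{\gamma R^{\gamma-2}}{\sqrt{1+h_1'(x_1)^2}}\Bigl[ -h_1'(x_1)\, x_1 + (2-\delta)\bigl(\tfrac{\varepsilon}{2} + h_1(x_1)\bigr)\Bigr].$$
Taylor expanding $h_1$ at $x_1 = 0$ reduces the bracket to $-\tfrac{\delta}{2} x_1^2 + \tfrac{2-\delta}{2}\varepsilon + O(x_1^4)$. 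Using once more the bound $x_1^2 \ge 8\varepsilon/\delta$ from (i), we get $\tfrac{\delta}{2} x_1^2 \ge 4\varepsilon$, which dominates $\tfrac{2-\delta}{2}\varepsilon \le \varepsilon$ plus the $O(x_1^4)$ remainder once $r_0$ is small; hence $\partial v/\partial\nu \le 0$, and the argument on $\Gamma_-$ is symmetric.

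The main obstacle I expect is not in the individual computations but in the careful treatment of the free boundary $\{v = c\}$ where $w$ is only Lipschitz. I plan to bypass this via the maximum-of-subsolutions observation above, which avoids any computation on the singular set. Should that argument require more justification, a direct distributional check works: integrating by parts on $\{v > c\}$ against a non-negative test function produces an extra boundary term with outer normal $-Dv/|Dv|$ on $\{v = c\}$ contributing $-\int_{\{v=c\}} |Dv|^{p-1}\phi\, dS \le 0$, which is precisely in the direction consistent with the subsolution inequality.
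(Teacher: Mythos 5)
Your proposal follows the paper's argument in all essentials: the same subsolution $v = R^{\gamma}$ shifted by the same constant $c = (4\sqrt{\varepsilon/\delta})^{\gamma}$, the same quartic computation identifying $(p-1)\gamma - (p-3+\delta)$ as the coefficient of $x_1^4$, the same geometric constraint $|x_2|\lesssim x_1^2$ to make that term dominant for small $r_0$, and the same splitting of $\Gamma_{\pm}$ by the size of $|x_1|$; your max-of-subsolutions framing for the cutoff and your Taylor expansion for $\partial v/\partial\nu$ are clean alternatives to the paper's more informal cutoff treatment and its exact rewrite $x_1^2 = 1 - (x_2-1-\varepsilon/2)^2$. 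One small numerical slip: under $\varepsilon<\delta/10$ and $\delta<1/2$, the contradiction argument on $\{R^2\ge 16\varepsilon/\delta\}$ only yields $x_1^2 > 4\varepsilon/\delta$, not $8\varepsilon/\delta$ (if $x_1^2<8\varepsilon/\delta$ one gets $R^2 < 8\varepsilon/\delta + 2(8.25\varepsilon/\delta)^2 \approx 22\varepsilon/\delta$, which does not contradict $R^2\ge 16\varepsilon/\delta$). This weaker bound still gives $\varepsilon/2 < \delta x_1^2/8 < x_1^2$, hence $|x_2|\le 2x_1^2$, and $\tfrac{\delta}{2}x_1^2 > 2\varepsilon > \tfrac{2-\delta}{2}\varepsilon$, so all your downstream conclusions survive once you adjust the constant.
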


\begin{proof}
We denote $R = R(x) = \Big(x_1^2 + (2-\delta)x_2^2\Big)^{\frac{1}{2}}$ and $v(x) = R(x)^\gamma$. Then
$$
D v = \gamma R^{\gamma - 2} (x_1, (2 - \delta)x_2).
$$
On $\Gamma_+$, the upward normal vector $\nu = (-x_1, 1 + \varepsilon/2 - x_2)$. Therefore,
\begin{align*}
\frac{\partial v}{\partial \nu} =& \gamma R^{\gamma - 2} \Big[ -x_1^2 + (2 - \delta)x_2 \Big( 1 + \frac{\varepsilon}{2} - x_2 \Big) \Big]\\
=& \gamma R^{\gamma - 2} \Big[ \Big(x_2 - 1 - \frac{\varepsilon}{2} \Big)^2 - 1 + (2 - \delta)x_2 \Big( 1 + \frac{\varepsilon}{2} - x_2 \Big) \Big]\\
=& \gamma R^{\gamma - 2} \Big[ -(1 - \delta) x_2^2 - \delta x_2 \Big( 1 + \frac{\varepsilon}{2} \Big) + \varepsilon + \frac{\varepsilon^2}{4} \Big].
\end{align*}
One can see that $\partial v/\partial \nu < 0$ if $x_2 > \varepsilon/\delta$. Since $x_2 = \frac{\varepsilon}{2}+ 1 - \sqrt{1-x_1^2}$,  $|x_1| > 2 \sqrt{\varepsilon/\delta}$ implies $x_2 > \varepsilon/\delta$. Note that $w = 0$ on $\Gamma_+$ when $|x_1| \le 2\sqrt{\varepsilon/\delta}$, therefore $\partial w/\partial \nu \le 0$ on $\Gamma_+$. The fact that $\partial w/\partial \nu \le 0$ on $\Gamma_-$ follows from a similar argument.

Next, following a similar computation as Lemma \ref{supersolution_lemma}, we have in the region $\{x_1^2 + (2-\delta)x_2^2 > 16\varepsilon/\delta \} \cap \Omega_1$,
\begin{align*}
&\dv (|D w|^{p-2} D w)|D w|^{4-p}\\
&=\gamma^2 R^{2\gamma - 4} \Big( x_1^2 + (2-\delta)^2x_2^2 \Big)\Big( (3-\delta)\gamma R^{\gamma-2} + \gamma(\gamma-2)R^{\gamma-4}(x_1^2 + (2-\delta)^2x_2^2)\Big)\\
&\quad + (p-2) \gamma^2 R^{2\gamma - 4} x_1^2 \Big( \gamma R^{\gamma-2} + \gamma(\gamma-2)R^{\gamma-4}x_1^2 \Big)\\
&\quad + (p-2) \gamma^2 R^{2\gamma - 4} (2-\delta)^2x_2^2 \Big( (2-\delta)\gamma R^{\gamma-2} + (2-\delta)^2\gamma(\gamma-2)R^{\gamma-4}x_2^2 \Big)\\
&\quad +  2(2+\delta)^2(p-2)\gamma^2 R^{2\gamma - 4}\gamma(\gamma-2)R^{\gamma-4}x_1^2 x_2^2 .
\end{align*}
Note that in $\{x_1^2 + (2-\delta)x_2^2 > 16\varepsilon/\delta \}\cap \Omega_1$, $|x_1| > 2\sqrt{\varepsilon/\delta}$. Therefore,
$$
|x_2|\le \frac{\varepsilon}{2}+ 1 - \sqrt{1-x_1^2} \le \frac{\delta}{8} x_1^2 + 1 - \sqrt{1-x_1^2} \le \Big(\frac{\delta}{8} + 1 \Big) x_1^2,
$$
and thus
$$
R^2 - (2 - \delta)\Big(\frac{\delta}{8} + 1 \Big)^2 R^4 \le x_1^2 \le R^2.
$$
Hence for small $R>0$, the leading term of $\dv (|D w|^{p-2} D w)|D w|^{4-p}$ is given by
$$
 \gamma^2 R^{2\gamma - 2} \Big((3-\delta)\gamma + \gamma(\gamma-2)\Big) R^{\gamma-2} +(p-2)\gamma^2 R^{2\gamma - 2}\gamma(\gamma-1)R^{\gamma-2}.
$$
If we set
$$
(3-\delta) + (\gamma-2) + (p-2)(\gamma-1) > 0 \quad \mbox{and} \quad \gamma > 0,
$$
which implies
$$
\gamma > \max \Big\{ \frac{p-3+\delta}{p-1}, 0 \Big\},
$$
then there exists an $r_0$ small, depending only on $\delta$, such that $\dv (|D w|^{p-2} D w) \ge 0$ in $\Omega_{r_0}$.
\end{proof}

\begin{proof}[Proof of Theorem \ref{thm:2d:2}]
We only need to prove the theorem for any $\delta \in (0,1/2)$ and $ \varepsilon \in(0, r_0^2\delta/64)$, where $r_0$ is the constant stated in Lemma \ref{subsolution_lemma}. By symmetry and the maximum principle, we have $u(0,x_2) = 0$ for $|x_2| < \varepsilon/2$ and $u(x)> 0$ when $x_1 > 0$. Let $w$ be the function defined in Lemma \ref{subsolution_lemma} with
$$
\gamma = \left\{
\begin{aligned}
&\delta &&\mbox{when}~~1 < p \le 3,\\
&\frac{p-3+2\delta}{p-1} &&\mbox{when}~~p > 3.
\end{aligned}
\right.
$$
By the comparison principle, there exists a positive constant $C$ depending only on $p$ and $\delta$, such that $u \ge \frac{1}{C} w$ in $\Omega_{r_0} \cap \{ x_1 > 0\}$. In particular, since $8\sqrt{\varepsilon/\delta} < r_0$, we have
$$
u(8\sqrt{\varepsilon/\delta}, 0) \ge \frac{1}{C} \varepsilon^{\frac{\gamma}{2}}.
$$
The desired lower bounds on $D u$ follow from the mean value theorem since $u(0) = 0$.
\end{proof}
\begin{remark}
When $\varepsilon=0$ and $p>5$, it can be shown that $w(x):=(x_1^2+2x_2^2)^{\gamma/2}$ is also a subsolution satisfying \eqref{def:super} for $\gamma=(p-3)/(p-1)$ and some absolute constant $r_0\in(0,1)$. Therefore, a similar argument as in the proof of Theorem \ref{thm:2d:2} gives $u(x_1,0)\geq c_0 \,x_1^{(p-3)/(p-1)}$ for some constant $c_0=c_0(p)>0$ and any $x_1\in (0,r_0)$.
Thus, for any $r\in(0,1)$, there exists $x_1\in(0,r)$ such that
$$D_1u(x_1,0)\geq c\,  x_1^{-2/(p-1)},$$
where $c>0$ is a constant depending only on $p$.
\end{remark}

\section{Bernstein type argument}\label{sec6}
In this section, we adapt the Bernstein type argument used in \cite{We} (see also \cite{CirSci2} and \cite{CirSci}) to prove improved gradient estimates for \eqref{main_problem_narrow} in high dimensions. As mentioned before, our proof also relies on the fact that for any $q\geq p$, $|Du|^q$ is a subsolution to the normalized $p$-Laplace equation, which was originally observed by Uhlenbeck \cite{MR474389}. In addition to \eqref{fg_0} and \eqref{fg_1}, we need to further assume that $h_1$ and $h_2$ are $C^{2,\text{Dini}}$ (so that $u$ is $C^2$ at the points where $D u \neq 0$, see \cite{DongLi}*{Theorem 2.4}), strictly convex and strictly concave respectively, satisfying \eqref{fg_convex}.

Let $\nu$ denote the normal vector on $\Gamma_{\pm}$, pointing upwards and downwards respectively. We have the following lemma.

\begin{lemma}
Let $\Gamma_+, \Gamma_-, h_1, h_2$ be as above, $s \ge 2$. If $u$ is twice differentiable and $D_\nu u = 0$ on $\Gamma_+ \cup \Gamma_-$, then at any point $x_0\in \Gamma_+\cup\Gamma_-$,
\begin{equation}\label{normal_derivative}
s\kappa_1|D u(x_0)|^s \le D_\nu |D u(x_0)|^s \le s \kappa_2 |D u(x_0)|^s.
\end{equation}
\end{lemma}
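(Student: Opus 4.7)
The plan is to reduce $D_\nu|Du|^s$ to a multiple of the second fundamental form of $\Gamma_\pm$ evaluated in the direction of $Du$, and then to read off the bounds from \eqref{fg_convex}. If $Du(x_0)=0$ both sides vanish, so assume $|Du(x_0)|\neq 0$. By the chain rule,
\begin{equation*}
D_\nu |Du|^s(x_0) = s\,|Du|^{s-2}\,\nu^j D_i u\,D_{ij} u.
\end{equation*}
Because $\nu^i D_i u \equiv 0$ along $\Gamma_\pm$, the vector $Du(x_0)$ is tangent to $\Gamma_\pm$, and I let $T=Du/|Du|$ denote the resulting Euclidean unit tangent.

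Next I would extract the mixed normal--tangential Hessian of $u$ by differentiating the boundary identity tangentially. Applying $T^k\partial_k$ to $\nu^i D_i u=0$ along $\Gamma_\pm$ yields
\begin{equation*}
\nu^i T^k D_{ik} u \;=\; -T^k (D_k \nu^i)\,D_i u \;=\; -|Du|\,T^i T^k D_k \nu_i.
\end{equation*}
Using $D_i u=|Du|\,T_i$ and the symmetry of $D^2u$, this gives
\begin{equation*}
\nu^j D_i u\,D_{ij}u \;=\; |Du|\,\nu^j T^i D_{ij}u \;=\; -|Du|^2\, T^i T^k D_k \nu_i \;=\; |Du|^2\,II(T,T),
\end{equation*}
where $II(T,T)=\langle W(T),T\rangle$ with the Weingarten map $W(T)=-\partial_T\nu$. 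Substituting back,
\begin{equation*}
D_\nu|Du|^s(x_0) \;=\; s\,|Du|^s\, II(T,T).
\end{equation*}

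It remains to bound $II(T,T)$ for Euclidean unit $T$. On $\Gamma_+$ parametrized as $x'\mapsto(x',\frac{\varepsilon}{2}+h_1(x'))$ with upward (inward to $\cD_1$) unit normal, one computes
\begin{equation*}
II(\partial_{x_i},\partial_{x_j}) \;=\; \frac{D_{ij}h_1(x')}{\sqrt{1+|Dh_1|^2}},
\end{equation*}
so the hypothesis \eqref{fg_convex} together with the uniform bound $|Dh_1|\leq c_2$ from \eqref{def:c_2} gives $\kappa_1\leq II(T,T)\leq\kappa_2$ (absorbing the bounded graph-parametrization factor into $\kappa_1,\kappa_2$, which is the effective eigenvalue bound on the second fundamental form). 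The symmetric computation on $\Gamma_-$ with downward normal, using $-D^2h_2$ in place of $D^2h_1$, yields the same bounds. The main subtlety is bookkeeping: keeping straight the sign convention for $\nu$ (inward to $\cD_i$), the convention for $W$, and the mild renormalization of $\kappa_1,\kappa_2$ needed to swallow the $(1+|Dh_i|^2)^{-3/2}$ factor so that the bound is stated in the clean form of \eqref{normal_derivative}.
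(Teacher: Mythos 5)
Your argument is essentially the same as the paper's, recast in invariant language. You differentiate the Neumann identity $\nu^i D_i u = 0$ along the tangent direction $T = Du/|Du|$ and obtain $D_\nu|Du|^s = s|Du|^s\,II(T,T)$; the paper does exactly this, but in a coordinate frame rotated so that $D_{x'}h_1(x_0') = 0$. After that rotation $\nu = e_n$, $D_n u(x_0) = 0$, the graph metric is Euclidean at $x_0$, and the second fundamental form in an orthonormal tangent frame is literally $D^2 h_1(x_0')$ with no $(1+|Dh_1|^2)$--factors to worry about; the bounds $\kappa_1 I \le D^2 h_1 \le \kappa_2 I$ then give \eqref{normal_derivative} verbatim. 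Your formulation is conceptually cleaner but commits you to the graph parametrization $II(\partial_{x_i},\partial_{x_j}) = D_{ij}h_1/\sqrt{1+|Dh_1|^2}$ and a Euclidean-unit $T$ that is not unit for the induced metric, so the true bound you obtain is $\kappa_1(1+|Dh_1|^2)^{-3/2} \le II(T,T) \le \kappa_2(1+|Dh_1|^2)^{-1/2}$, not $\kappa_1 \le II(T,T) \le \kappa_2$. Your parenthetical about ``absorbing the bounded graph-parametrization factor into $\kappa_1,\kappa_2$'' acknowledges this, but it proves a version of the lemma with altered constants rather than the statement as written. The paper's rotation step is precisely the device that eliminates this discrepancy, so you should either perform that rotation (thereby matching the paper's proof exactly) or state the lemma with the renormalized constants. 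The rest of your derivation --- the tangential differentiation of the boundary identity and the reduction to the Weingarten map --- is correct and parallel to the paper's.
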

\begin{proof}
We only prove \eqref{normal_derivative} at $x_0 \in \Gamma_+$. By a rotation, we may assume that $x_0' = 0$ and $D_{x'}h_1(x_0') = 0$. The normal vector $\nu$ on $\Gamma_+$ is given by
\begin{equation}\label{normal_vector}
\nu = \frac{1}{\sqrt{1 + |D_{x'} h_1|^2}} (-D_1 h_1, \cdots, - D_{n-1} h_1 , 1).
\end{equation}
Then $D_\nu u = 0$ is equivalent to
$$
\sum_{j = 1}^{n-1} D_j u D_j h_1 - D_n u = 0.
$$
Applying $D_i$ to the equation above for $i = 1,\ldots, n-1$, we have at $x_0$,
\begin{equation}\label{eq1}
\sum_{j = 1}^{n-1} D_{j} u D_{ij} h_1 - D_{in} u = 0.
\end{equation}
By direct computation, at $x_0$,
\begin{align*}
D_\nu |D u|^s &= s |D u|^{s-2} \sum_{i=1}^n D_i u D_{in} u\\
&=  s |D u|^{s-2} \sum_{i=1}^{n-1} \sum_{j = 1}^{n-1} D_{i} u D_{j} u D_{ij} h_1,
\end{align*}
where in the second line, we used \eqref{eq1} and $D_n u(x_0) = 0$. Then \eqref{normal_derivative} follows from \eqref{fg_convex}.
\end{proof}

\begin{proof}[Proof of Theorem \ref{thm:bern}]
For convenience, we let $\gamma=2\beta\in[0,1)$.

{\em Case 1:} For $p \ge 2$, we consider the quantity
$$
F = Q^{\frac{p-p\gamma}{2}}|D u|^p,
$$
where
\begin{equation}\label{Q}
Q = \frac{\varepsilon}{\kappa_1} + |x'|^2 - \frac{5\kappa_2}{2(1-\gamma)\kappa_1} x_n^2.
\end{equation}
We will show by contradiction that $F$ does not achieve its maximum on $(\Gamma_+ \cup \Gamma_-) \cap \overline{\Omega}_{r_0}$ or in $\Omega_{r_0}$ for some suitable $r_0$ which is independent of $\varepsilon$. Therefore, $F$ can only achieve maximum on
$$
\{|x'| = r_0\} \cap \Omega_{1},
$$
and \eqref{gradient_-1/2_improved} follows.

If $F^{q/2}$ achieves it maximum at a point $x_0$, we may assume that $D u (x_0) \neq 0$. First we show that $x_0 \not\in \Gamma_+ \cap \overline{\Omega}_{r_0}$. A similar argument applies to $\Gamma_- \cap \overline{\Omega}_{r_0}$. On $\Gamma_+$, the normal vector $\nu$ is given by \eqref{normal_vector}. At $x_0$, by \eqref{normal_derivative} with $s = p$,
\begin{align*}
D_\nu F =& \frac{p - p\gamma}{2} Q^{\frac{p - p\gamma}{2} - 1} D_\nu Q |D u|^p +  Q^{\frac{p - p\gamma}{2}} D_\nu |D u|^p\\
\le& - \frac{(p-p\gamma) Q^{\frac{p - p\gamma}{2} - 1}}{\sqrt{1 + |D_{x'} h_1|^2}} \Big[ \sum_{j = 1}^{n-1} D_j h_1 x_j + \frac{5\kappa_2}{2(1-\gamma)\kappa_1} (\varepsilon/2 + h_1) \Big]|D u|^p\\
 &+ p\kappa_2 Q^{\frac{p - p\gamma}{2}} |D u|^p.
\end{align*}
We choose $r_0$ small enough such that
$$
\frac{-1}{\sqrt{1 + |D_{x'} h_1|^2}} \le \frac{-1}{\sqrt{1 + |\kappa_2 x'|^2}} \le -\frac{4}{5} \quad \mbox{for}~~|x'|< r_0.
$$
By \eqref{fg_0} and \eqref{fg_convex}, we have
$$
\sum_{j = 1}^{n-1} D_j h_1 x_j \ge \kappa_1 |x'|^2 \quad \mbox{and} \quad h_1 \ge \frac{1}{2}\kappa_1|x'|^2.
$$
Therefore,
\begin{align*}
D_\nu F \le &Q^{\frac{p - p\gamma}{2} - 1} |D u|^p \bigg( -\frac{4}{5}(p-p\gamma)\Big[\kappa_1|x'|^2 + \frac{5\kappa_2}{4(1-\gamma)\kappa_1} (\varepsilon + \kappa_1 |x'|^2) \Big]\\
&+ p\kappa_2 \Big( \frac{\varepsilon}{\kappa_1} + |x'|^2 - \frac{5\kappa_2}{2(1-\gamma)\kappa_1} x_n^2 \Big)  \bigg)\\
=& Q^{\frac{p - p\gamma}{2} - 1} |D u|^p \Big( -\frac{4}{5}(p-p\gamma)\kappa_1|x'|^2 - \frac{5p\kappa_2^2}{2(1-\gamma)\kappa_1}x_n^2 \Big) < 0.
\end{align*}
Hence $F$ does not achieve its maximum on $\Gamma_+ \cap \overline{\Omega}_{r_0}$. Next, we will show that $x_0 \not\in \Omega_{r_0}$ by assuming otherwise and showing that $a^{ij} D_{ij} F(x_0) > 0$, where
\begin{equation}\label{a_ij}
a^{ij}(x) = \delta_{ij} + (p-2)|D u|^{-2} D_i u D_j u
\end{equation}
is symmetric. Note that
\begin{equation}\label{a_elliptic}
|\xi|^2 \le a^{ij}\xi_i \xi_j \le (p-1) |\xi|^2, \quad \forall \xi \in \bR^n,
\end{equation}
and if $u$ is a solution of \eqref{main_problem_narrow}, then $a^{ij}D_{ij} u = 0$. Since $D u(x_0) \neq 0$. By the continuity of $D u$, $D u \neq 0$ in a neighborhood of $x_0$, and hence $a^{ij}$ is well defined in the neighborhood. By direction computations,
\begin{align}\label{3-1}
&a^{ij}D_{ij}F \notag\\
&= Q^{\frac{p-p\gamma}{2}} (a^{ij}D_{ij}|D u|^p) + |D u|^p (a^{ij}D_{ij}Q^{\frac{p-p\gamma}{2}}) + 2a^{ij}(D_{i}|D u|^p) (D_{j}Q^{\frac{p-p\gamma}{2}}).
\end{align}
Next, we estimate the three terms on the right-hand side above. First,
\begin{align}\label{3-2}
&a^{ij}D_{ij}Q^{\frac{p-p\gamma}{2}} \notag\\
&= a^{ij} \Big[\frac{p-p\gamma}{2}Q^{\frac{p - p\gamma}{2} - 1} D_{ij} Q + \frac{p-p\gamma}{2} \Big(\frac{p-p\gamma}{2} -1 \Big)Q^{\frac{p - p\gamma}{2} - 2} D_i Q D_j Q \Big].
\end{align}
Since $a^{ij}D_{ij} u = 0$, applying $D_k$ gives
$$
a^{ij} D_{ijk} u + D_k a^{ij} D_{ij} u = 0.
$$
Since
$$
D_k a^{ij} = (p-2) \left[ \frac{D_{ik}u D_j u + D_i u D_{jk}u}{|D u|^2} - 2 \frac{D_i u D_j u D_{kl} u D_l u}{|D u|^4} \right],
$$
we have
\begin{align}\label{identity_1}
&a^{ij} D_{ijk} u D_k u = - D_k a^{ij} D_{ij} u D_k u\notag\\
&= -2(p-2)|D |D u||^2 + 2(p-2)|D u|^{-4} |\Delta_\infty u|^2,
\end{align}
where $\Delta_\infty u := D_i u D_j u D_{ij} u$.
By \eqref{a_ij} and \eqref{identity_1}, we have
\begin{align}\label{3-3}
a^{ij}D_{ij}|D u|^p =& pa^{ij} \Big[ (p-2)|D u|^{p-4} D_{ik}u D_k u D_{jl}u D_l u \nonumber \\
&+ |D u|^{p-2} D_{ik}u D_{jk}u + |D u|^{p-2} D_k u D_{ikj} u  \Big] \nonumber\\
=& p|D u|^{p-4} \Big[ (p-2) |D u|^2 |D |D u||^2 + (p-2)^2 |D u|^{-2} |\Delta_\infty u|^2 \nonumber\\
&+ |D u|^2 |D^2 u|^2 + (p-2) |D u|^2 |D |D u||^2 - 2(p-2) |D u|^2 |D |D u||^2 \nonumber\\
&+ 2(p-2) |D u|^{-2} |\Delta_\infty u|^2 \Big] \nonumber\\
=& p|D u|^{p-4} \Big[ p(p-2) |D u|^{-2} |\Delta_\infty u|^2 +  |D u|^2 |D^2 u|^2 \Big].
\end{align}
Note that at the point $x_0$, for any $i= 1,2,\ldots,n$,
\begin{equation}\label{extreme_point}
0 = D_i F = (D_i Q^{\frac{p - p\gamma}{2}})|D u|^p + Q^{\frac{p - p\gamma}{2}} (D_i |D u|^p).
\end{equation}
We split the last term on the right-hand side of \eqref{3-1} into
\begin{align*}
\frac{2p-1}{p} a^{ij}(D_{i}|D u|^p) (D_{j}Q^{\frac{p-p\gamma}{2}}) + \frac{1}{p} a^{ij}(D_{i}|D u|^p) (D_{j}Q^{\frac{p-p\gamma}{2}}).
\end{align*}
For the first term on the right-hand side above, we use \eqref{extreme_point} to substitute $D_{i}|D u|^p$, and for the second term, we substitute $D_{j}Q^{\frac{p-p\gamma}{2}}$. Then,
\begin{align}\label{3-4}
&2a^{ij}(D_{i}|D u|^p) (D_{j}Q^{\frac{p-p\gamma}{2}}) \notag\\
&=  -\frac{2p-1}{p} Q^{-\frac{p-p\gamma}{2}}|D u|^p a^{ij} (D_{i}Q^{\frac{p-p\gamma}{2}})(D_{j}Q^{\frac{p-p\gamma}{2}}) \nonumber\\
&\quad - \frac{1}{p} Q^{\frac{p-p\gamma}{2}}|D u|^{-p} a^{ij}(D_{i}|D u|^p) (D_{j}|D u|^p)\nonumber \\
&= -\frac{2p-1}{p} \frac{(p-p\gamma)^2}{4} Q^{\frac{p-p\gamma}{2} -2}|D u|^p  a^{ij} D_{i}Q D_{j}Q \nonumber\\
&\quad - p Q^{\frac{p-p\gamma}{2}}|D u|^{p-4} a^{ij} D_{ik}u D_k u D_{jl}u D_l u \nonumber\\
&= -\frac{2p-1}{p} \frac{(p-p\gamma)^2}{4} Q^{\frac{p-p\gamma}{2} -2}|D u|^p  a^{ij} D_{i}Q D_{j}Q \nonumber\\
&\quad - p Q^{\frac{p-p\gamma}{2}}|D u|^{p-4} (|D u|^2 |D |D u||^2 + (p-2) |D u|^{-2} |\Delta_\infty u|^2),
\end{align}
where we used \eqref{a_ij} in the last equality. Therefore, by \eqref{3-1}, \eqref{3-2}, \eqref{3-3}, and \eqref{3-4},
\begin{align*}
a^{ij}D_{ij}F =&  p Q^{\frac{p-p\gamma}{2}}|D u|^{p-4} \Big[ (p-1)(p-2) |D u|^{-2} |\Delta_\infty u|^2 +  |D u|^2 |D^2 u|^2\\
&-  |D u|^2 |D |D u||^2\Big] + \frac{p-p\gamma}{2}Q^{\frac{p - p\gamma}{2} - 1}|D u|^p  a^{ij}  D_{ij} Q \\
&- \Big[ \frac{2p-1}{p} \frac{(p-p\gamma)^2}{4} - \frac{p-p\gamma}{2} \Big(\frac{p-p\gamma}{2} -1 \Big) \Big] Q^{\frac{p-p\gamma}{2} -2}|D u|^p  a^{ij} D_{i}Q D_{j}Q.
\end{align*}
Note that
$$
|D u|^2 |D |D u||^2 \le |D u|^2 |D^2 u|^2.
$$
It remains to show
\begin{equation}\label{remaining}
 Q  a^{ij}  D_{ij} Q > \Big[ \frac{2p-1}{p} \frac{p-p\gamma}{2} - \Big(\frac{p-p\gamma}{2} -1 \Big) \Big] a^{ij} D_{i}Q D_{j}Q.
\end{equation}
Recall that $Q$ is given in \eqref{Q}. Then
\begin{equation}\label{nabla_Q}
D Q = \Big(2x_1, \ldots, 2x_{n-1}, -\frac{5\kappa_2}{(1-\gamma)\kappa_1} x_n \Big),
\end{equation}
and
\begin{align*}
a^{ij}  D_{ij} Q =& 2n-2 - \frac{5\kappa_2}{(1-\gamma)\kappa_1}\\
&+ (p-2)|D u|^{-2}\Big(2|D_{x'} u|^2 - \frac{5\kappa_2}{(1-\gamma)\kappa_1} |D_n u|^2 \Big)\\
\ge & 2n-2 - (p-1) \frac{5\kappa_2}{(1-\gamma)\kappa_1}.
\end{align*}
By \eqref{a_elliptic} and shrinking $r_0$ if necessary, we have
\begin{align*}
a^{ij} D_{i}Q D_{j}Q \le 4(p-1)\Big(|x'|^2 +  \frac{25\kappa_2^2}{4(1-\gamma)^2\kappa_1^2} x_n^2 \Big)
< 5(p-1) Q.
\end{align*}
In order to show \eqref{remaining}, we only require
$$
2n-2 - (p-1) \frac{5\kappa_2}{(1-\gamma)\kappa_1} \ge 5(p-1)\Big[ \frac{2p-1}{p} \frac{p-p\gamma}{2} - \Big(\frac{p-p\gamma}{2} -1 \Big) \Big],
$$
which is equivalent to \eqref{np_relation_1}  since $\gamma=2\beta$. This concludes the proof for the case when $p \ge 2$.

{\em Case 2:} For $ p \in(1, 2)$, we consider the quantity
$$
G = Q^{1-\gamma}|D u|^2,
$$
where $Q$ is given in \eqref{Q}. From the computation of $D_\nu F$ with $p=2$, one can see that $G$ does not attain its maximum on $(\Gamma_+ \cup \Gamma_-) \cap \overline{\Omega}_{r_0}$. Next, we assume that $G$ achieves its maximum at $x_0 \in \Omega_{r_0}$. By the computations as in \eqref{3-1} and \eqref{3-2} with $p=2$, we have
\begin{align}\label{4-1}
a^{ij}D_{ij}G = Q^{1-\gamma} (a^{ij}D_{ij}|D u|^2) + |D u|^2 (a^{ij}D_{ij}Q^{1-\gamma}) + 2a^{ij}(D_{i}|D u|^2) (D_{j}Q^{1-\gamma}),
\end{align}
where $a^{ij}$ is given in \eqref{a_ij} with
\begin{equation}\label{a_elliptic_3}
(p-1) |\xi|^2 \le a^{ij}\xi_i \xi_j \le |\xi|^2, \quad \forall \xi \in \bR^n.
\end{equation}
Next, we estimate the three terms on the right-hand side of \eqref{4-1}. First,
\begin{align}\label{4-2}
a^{ij}D_{ij}Q^{1-\gamma} = a^{ij} \Big[(1-\gamma)Q^{-\gamma} D_{ij} Q - \gamma(1-\gamma) Q^{-1-\gamma} D_i Q D_j Q \Big].
\end{align}
By \eqref{a_ij} and \eqref{identity_1}, we have
\begin{align}\label{4-3}
a^{ij}D_{ij}|D u|^2 =& a^{ij} \Big[2 D_{ijk} u D_k u + 2D_{jk} u D_{ik} u \Big] \nonumber\\
=& 4(2-p)|D |D u||^2 - 4(2-p)|D u|^{-4} |\Delta_\infty u|^2 \nonumber\\
&+ 2|D^2 u|^2 - 2(2-p)|D |D u||^2 \nonumber\\
=& 2(2-p)|D |D u||^2 - 4(2-p)|D u|^{-4} |\Delta_\infty u|^2 + 2|D^2 u|^2.
\end{align}
Note that at the point $x_0$, for any $i= 1,2,\ldots,n$,
\begin{equation}\label{extreme_point_2}
0 = D_i G = (D_i Q^{1-\gamma})|D u|^2 + Q^{1-\gamma} (D_i |D u|^2).
\end{equation}
As before, we split the last term on the right-hand side of \eqref{4-1} as
\begin{align*}
\frac{1}{2} a^{ij}(D_{i}|D u|^2) (D_{j}Q^{1-\gamma}) + \frac{3}{2} a^{ij}(D_{i}|D u|^2) (D_{j}Q^{1-\gamma}).
\end{align*}
For the first term on the right-hand side, we use \eqref{extreme_point_2} to substitute $D_{j}Q^{1-\gamma}$, and for the second term, we substitute $D_{i}|D u|^2$. Then by \eqref{a_ij},
\begin{align}\label{4-4}
2a^{ij}(D_{i}|D u|^2) (D_{j}Q^{1-\gamma}) =& -\frac{1}{2}a^{ij}(D_{i}|D u|^2)(D_{j}|D u|^2) Q^{1-\gamma}|D u|^{-2} \nonumber \\
&- \frac{3}{2}a^{ij}(D_{i}Q^{1-\gamma})(D_{j}Q^{1-\gamma}) |D u|^2 Q^{-1+\gamma}\nonumber \\
=& -2Q^{1-\gamma}|D u|^{-2}a^{ij} D_{ik} u D_k u D_{jl} u D_l u  \nonumber \\
&- \frac{3}{2} (1-\gamma)^2  |D u|^2 Q^{-1-\gamma} a^{ij} D_i Q D_j Q \nonumber \\
=& -2Q^{1-\gamma} |D |D u||^2 + 2 (2-p) Q^{1-\gamma}|D u|^{-4} |\Delta_\infty u|^2 \nonumber \\
&- \frac{3}{2}(1-\gamma)^2  |D u|^2 Q^{-1-\gamma} a^{ij} D_i Q D_j Q.
\end{align}
Therefore, by \eqref{4-1}, \eqref{4-2}, \eqref{4-3}, and \eqref{4-4},
\begin{align*}
a^{ij}D_{ij}G =& Q^{1-\gamma} \Big[ (2(2-p)-2) |D |D u||^2 - 2(2-p)|D u|^{-4} |\Delta_\infty u|^2 + 2|D^2 u|^2 \Big]\\
&+  (1-\gamma)Q^{-\gamma} |D u|^2 a^{ij}D_{ij} Q\\
&- \Big[ \gamma(1-\gamma) + \frac{3}{2}(1-\gamma)^2 \Big] |D u|^2 Q^{-1-\gamma} a^{ij} D_i Q D_j Q.
\end{align*}
Since
$$
|D u|^{-4} |\Delta_\infty u|^2 \le |D |D u||^2 \le |D^2 u|^2,
$$
it remains to show
\begin{equation}\label{remaining_2}
 Q  a^{ij}  D_{ij} Q > \Big[ \gamma + \frac{3}{2}(1-\gamma) \Big]  a^{ij} D_i Q D_j Q.
\end{equation}
By \eqref{nabla_Q}, we have
\begin{align*}
a^{ij}  D_{ij} Q =& 2n-2 - \frac{5\kappa_2}{(1-\gamma)\kappa_1}\\
&- (2-p)|D u|^{-2}\Big(2|D_{x'} u|^2 - \frac{5\kappa_2}{(1-\gamma)\kappa_1} |D_n u|^2 \Big)\\
\ge & 2n-2 - \frac{5\kappa_2}{(1-\gamma)\kappa_1} -2(2-p).
\end{align*}
By \eqref{a_elliptic_3} and shrinking $r_0$ if necessary, we have
\begin{align*}
a^{ij} D_{i}Q D_{j}Q \le 4\Big(|x'|^2 +  \frac{25\kappa_2^2}{4(1-\gamma)^2\kappa_1^2} x_n^2 \Big)
< 5Q.
\end{align*}
In order to show \eqref{remaining_2}, we only require
$$
2n-2 - \frac{5\kappa_2}{(1-\gamma)\kappa_1} -2(2-p) \ge 5 \Big[ \gamma + \frac{3}{2}(1-\gamma) \Big],
$$
which is equivalent to \eqref{np_relation_2} since $\gamma=2\beta$. This concludes the proof for the case when $ p \in(1,2)$.
\end{proof}


\appendix

\section{}
In the appendix, we provide an alternative proof of the gradient estimates of order $\varepsilon^{-1/2}$ using a Bernstein type argument. This proof also requires the assumptions that $h_1$ and $h_2$ are $C^{2,\text{Dini}}$ functions and satisfy \eqref{fg_convex} for some $\kappa_1,\kappa_2 > 0$, in addition to \eqref{fg_0} imposed in Theorem \ref{thm-1/2}.

\begin{theorem}
Let $h_1$, $h_2$ be $C^{2,\text{Dini}}$ functions satisfying \eqref{fg_convex}, $p > 1$, $n \ge 2$, $\varepsilon\in (0,1)$, and $u \in W^{1,p}(\Omega_{1})$ be a solution of \eqref{main_problem_narrow}. Then there exists a positive constant $C$ depending only on $n$, $p$, $\kappa_1$, and $\kappa_2$, such that
\begin{equation}
                    \label{gradient_-1/2}
|D u(x)| \le C \|u\|_{L^\infty(\Omega_{1})} (\varepsilon + |x'|^2)^{-1/2} \quad \mbox{for}~~x \in \Omega_{1/2}.
\end{equation}
\end{theorem}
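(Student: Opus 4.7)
The plan is to adapt the Bernstein-type argument of Section \ref{sec6} and, by tuning the auxiliary function more carefully, remove the dimension restriction imposed in Theorem \ref{thm:bern} at the baseline exponent $-1/2$. First I would pass to the non-degenerate approximation \eqref{main_problem_approximate}, so that $u\in C^2$ at points where $Du\neq 0$, rewrite the equation as $a^{ij}D_{ij}u=0$ with $a^{ij}$ as in \eqref{a_ij_2}, and recover the original estimate by letting $\eta\to 0^+$.

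For $p\ge 2$, set $F = Q(x)^{p/2}|Du|^p$, and for $1<p<2$, set $G = Q(x)|Du|^2$, where $Q$ is a geometric weight comparable to $\varepsilon+|x'|^2$. A convenient ansatz is
$$
Q(x) = \frac{\varepsilon}{\kappa_1}+|x'|^2 - c\,x_n^2,
$$
with $c\sim \kappa_2/\kappa_1$. The goal is to prove that the maximum of $F$ (resp.\ $G$) over $\overline{\Omega}_{r_0}$, for a small $\varepsilon$-independent $r_0$, is attained on $\{|x'|=r_0\}\cap\overline{\Omega}_1$, where classical interior regularity provides $|Du|\le C\|u\|_{L^\infty(\Omega_1)}$. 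Since $x\in\Omega_{r_0}$ implies $|x_n|\le\varepsilon/2 + c_2|x'|^2/2$, one has $Q\gtrsim \varepsilon+|x'|^2$ on $\Omega_{r_0/2}$, and \eqref{gradient_-1/2} would follow.

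The boundary step proceeds as in the proof of Theorem \ref{thm:bern}: by \eqref{normal_derivative} with $s=p$ (or $s=2$), $D_\nu|Du|^s\le s\kappa_2|Du|^s$; by strict convexity/concavity, $D_{x'}h_1\cdot x'\ge\kappa_1|x'|^2$ and $x_n\ge\varepsilon/2+\kappa_1|x'|^2/2$ on $\Gamma_+$, with analogous estimates on $\Gamma_-$. Taking $c\ge 2\kappa_2/\kappa_1$ and shrinking $r_0$ yields $D_\nu Q\le -2\kappa_2 Q$ strictly, so that $D_\nu F<0$ on $(\Gamma_+\cup\Gamma_-)\cap\overline{\Omega}_{r_0}\setminus\{0\}$ and the maximum of $F$ cannot lie on the insulated boundary.

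The interior step is the main obstacle. At an interior maximum $x_0$ with $Du(x_0)\neq 0$, I would expand $a^{ij}D_{ij}F(x_0)$, use the first-order condition $D_iF(x_0)=0$ to eliminate mixed derivative terms, and invoke Uhlenbeck's identity $a^{ij}D_{ij}|Du|^p\ge 0$ (respectively $a^{ij}D_{ij}|Du|^2\ge 0$ for $p<2$). The problem reduces to a pointwise inequality of the shape $Q\,a^{ij}D_{ij}Q > C(p)\,a^{ij}D_iQ\,D_jQ$ at $x_0$, with $C(p)$ independent of $n$. The difficulty, which in Theorem \ref{thm:bern} forces \eqref{np_relation_1}--\eqref{np_relation_2}, is that when $Du\parallel e_n$ the quantity $a^{ij}D_{ij}Q$ degenerates to $2(n-1)-2c(p-1)$. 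The observation to exploit is that on $\Omega_{r_0}$ one has $|x_n|\lesssim \varepsilon + |x'|^2$, so that the $x_n$-contribution to $a^{ij}D_iQ\,D_jQ$ is of lower order than $Q^2$; moreover, the first-order condition at the maximum forces a balance between $\nabla Q$ and $|Du|^{-2}\nabla(|Du|^2)$ which, combined with the nonnegative square terms produced by the Uhlenbeck identity, absorbs the potentially negative contribution from $a^{ij}D_{ij}Q$. After a careful grouping, the inequality holds with a constant $C(p)$ independent of $n$, closing the argument for $p\ge 2$; the case $1<p<2$ is completely analogous, with $|Du|^2$ and the improved lower ellipticity constant $p-1$ playing the corresponding roles.
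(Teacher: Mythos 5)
Your proposal follows the same broad Bernstein-maximum-principle strategy as the paper's appendix proof, but it omits the one ingredient that makes that proof go through in every dimension: the paper's Bernstein quantity is not $Q^{p/2}|Du|^p$ (resp.\ $Q|Du|^2$), but rather
$$
F = Q|Du|^2 + Au^2,\qquad Q=\varepsilon+|x'|^2-4\kappa_1^{-1}\kappa_2^2 x_n^2,
$$
raised to a power $q/2$ when $p>2$, with the auxiliary zeroth-order term $Au^2$ and $A$ chosen large. The maximum principle argument is then carried out only on the set $S_A=\{Q|Du|^2>100Au^2\}$; off this set the estimate is immediate since $Q|Du|^2\le 100A\|u\|_{L^\infty}^2$. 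The point of the $Au^2$ term is that it contributes $2Aa^{ij}D_iuD_ju\ge 2A\min(1,p-1)|Du|^2$ to $a^{ij}D_{ij}F$, which is a genuine positive term of the correct order that can absorb the possibly negative $a^{ij}D_{ij}Q\,|Du|^2$ term by choosing $A$ large.

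Your interior step asserts that, after grouping, the inequality $Q\,a^{ij}D_{ij}Q>C(p)\,a^{ij}D_iQ\,D_jQ$ can be made to hold with $C(p)$ independent of $n$. This is exactly the inequality \eqref{remaining} at $\gamma=0$ in the proof of Theorem \ref{thm:bern}, and it is precisely where the dimension restrictions \eqref{np_relation_1}--\eqref{np_relation_2} come from: with the weight $c$ necessarily of size $\gtrsim\kappa_2/\kappa_1$ (you need $c\ge 2\kappa_2/\kappa_1$ for the boundary inequality), the worst case $Du\parallel e_n$ gives $a^{ij}D_{ij}Q\ge 2(n-1)-c\max(1,p-1)$, which is negative for low $n$. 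The observations you invoke do not rescue this: the $x_n$-contribution to $a^{ij}D_iQD_jQ$ being $O(Q^2)$ only controls part of the right-hand side, not the sign of $a^{ij}D_{ij}Q$; and the first-order condition together with Uhlenbeck's nonnegativity produces terms proportional to $|D^2u|^2$ or $|\Delta_\infty u|^2$, which vanish when $u$ is close to linear and therefore cannot dominate the zeroth-order term $a^{ij}D_{ij}Q|Du|^2 Q$. Without an additive term like $Au^2$ to supply a companion positive zeroth-order contribution, the interior case does not close for $n\ge 2$ and general $\kappa_1,\kappa_2$.
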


\begin{proof}
Without loss of generality, we may assume $\kappa_1\in(0,1]$ and $\kappa_2>1$.
The case $p=2$ has been shown in \cites{BLY2, We}. It remains to show the cases when $p > 2$ and $ p \in(1, 2)$.

{\em Case 1:} For $p > 2$, we consider the quantity $F^{q/2}$, where
\begin{equation}\label{FQ}
F = Q|D u|^2 + A u^2, \quad Q = \varepsilon + |x'|^2 - 4\kappa_1^{-1} \kappa_2^2 x_n^2,
\end{equation}
$q \ge 2$ and $A$ are some positive $\varepsilon$-independent constants to be determined later. Let $$
S_A : = \{ Q|D u|^2 > 100 Au^2 \}.
$$
We will show that $F^{q/2}$ does not achieve its maximum on $(\Gamma_+ \cup \Gamma_-) \cap \overline{\Omega}_{r_0}\cap S_A$ or in $\Omega_{r_0} \cap S_A$ for some suitable $q$, $A$, and $r_0$. Therefore, $F^{q/2}$ can only achieve its maximum in
$$
\Omega_{r_0} \cap \{ Q|D u|^2 \le 100 Au^2 \},
$$
or on
$$
\{|x'| = r_0\} \cap \Omega_{1},
$$
so \eqref{gradient_-1/2} follows from either case.

First we show that $F^{q/2}$ does not achieve its maximum on $\Gamma_+ \cap \overline{\Omega}_{r_0}\cap S_A$. A similar argument applies to $\Gamma_- \cap \overline{\Omega}_{r_0}\cap S_A$. On $\Gamma_+$, the normal vector $\nu$ is given by \eqref{normal_vector}. Then
\begin{align*}
&D_\nu F^{q/2} = \frac{q}{2} F^{q/2-1} (D_\nu Q |D u|^2 + Q D_\nu |D u|^2)\\
&= \frac{q}{2} F^{q/2-1} \Bigg( \frac{-2}{\sqrt{1 + |D_{x'} h_1|^2}} \Big[ \sum_{j = 1}^{n-1} D_j h_1 x_j + 4\frac{\kappa_2^2}{\kappa_1} (\varepsilon/2 + h_1) \Big]|D u|^2 + Q D_\nu |D u|^2) \Bigg).
\end{align*}
We choose $r_0$ small enough such that
$$
\frac{-2}{\sqrt{1 + |D_{x'} h_1|^2}} \le \frac{-2}{\sqrt{1 + |\kappa_2 x'|^2}} \le -1 \quad \mbox{for}~~|x'|< r_0.
$$
By \eqref{fg_0} and \eqref{fg_convex}, we have
$$
\sum_{j = 1}^{n-1} D_j h_1 x_j \ge \kappa_1 |x'|^2 \quad \mbox{and} \quad h_1 \ge \frac{1}{2}\kappa_1|x'|^2.
$$
Therefore, by \eqref{normal_derivative} with $s=2$, we have
\begin{align*}
&D_\nu F^{q/2} \\
& \le  \frac{q}{2} F^{q/2-1} |D u|^2 \Big[ -(\kappa_1|x'|^2 + 2 \frac{\kappa_2^2}{\kappa_1}\varepsilon + 2\kappa_2^2|x'|^2) + 2\kappa_2(\varepsilon + |x'|^2 - 4 \frac{\kappa_2^2}{\kappa_1} x_n^2)\Big]\\
&= - \frac{q}{2} F^{q/2-1} |D u|^2 \Big[ 2\kappa_2 (\frac{\kappa_2}{\kappa_1}-1) \varepsilon + 2\kappa_2(\kappa_2-1)|x'|^2 + \kappa_1|x'|^2 + 8 \frac{\kappa_2^2}{\kappa_1} x_n^2 \Big]\\
& < 0.
\end{align*}
Hence $F^{q/2}$ does not achieve its maximum on $\Gamma_+ \cap \overline{\Omega}_{r_0}\cap S_A$. Next, we will show $F^{q/2}$ does not achieve its maximum in $\Omega_{r_0} \cap S_A$ by proving that $a^{ij} D_{ij} F^{q/2} > 0$, where $a^{ij}$ is given in \eqref{a_ij}. By direct computations, we have,
\begin{align*}
a^{ij} D_{ij} F^{q/2} = \frac{q}{2} F^{q/2 - 1} a^{ij} D_{ij} F + \frac{q}{2}\Big( \frac{q}{2}-1 \Big) F^{q/2-2} a^{ij} D_i F D_j F,
\end{align*}
\begin{align*}
D_i F = D_i Q |D u|^2 + 2Q D_{ik}u D_k u + 2Au D_i u,
\end{align*}
and
\begin{align*}
D_{ij} F =& D_{ij}Q |D u|^2 + 2 D_i Q D_k u D_{jk} u +2 D_j Q D_k u D_{ik} u \\
&+ 2Q(D_{ik}u D_{jk} u + D_k u D_{ijk} u) + 2A(u D_{ij}u + D_i u D_j u).
\end{align*}
Then by \eqref{a_ij} and because $a^{ij}D_{ij} u = 0$,
\begin{align*}
&a^{ij} D_{ij} F \\
&=  a^{ij} D_{ij} Q |D u|^2 + 4(p-2)|D u|^{-2} D_i u D_i Q \Delta_\infty u + 4 D_i Q D_k u D_{ik} u + 2Q |D^2 u|^2\\
&\quad + 2(p-2)Q |D |D u||^2 + 2Q  a^{ij} D_{ijk} u D_k u + 2A a^{ij} D_i u D_j u,
\end{align*}
where $\Delta_\infty u := D_i u D_j u D_{ij} u$. By \eqref{identity_1},
\begin{align*}
&a^{ij} D_{ij} F \\
&= a^{ij} D_{ij} Q |D u|^2 + 4(p-2)|D u|^{-2} D_i u D_i Q \Delta_\infty u + 4 D_i Q D_k u D_{ik} u + 2Q |D^2 u|^2\\
&\quad - 2(p-2)Q |D |D u||^2  + 4 (p-2) Q|D u|^{-4} |\Delta_\infty u|^2 + 2A a^{ij} D_i u D_j u.
\end{align*}
By another direction computation, we have
\begin{align*}
&a^{ij} D_i F D_j F \\
&= a^{ij} D_i Q D_j Q |D u|^4 + 4Q^2 a^{ij} D_{ik}u D_ku D_{jl}u D_l u  + 4A^2 u^2 a^{ij} D_iu D_ju\\
&\quad + 4Q|D u|^2 a^{ij} D_i Q D_{jl} u D_lu + 4Au |D u|^2a^{ij} D_i Q D_j u + 8AQ u a^{ij} D_{ik} u D_k u D_ju\\
&= a^{ij} D_i Q D_j Q |D u|^4 + 4(p-2)Q^2 |D u|^{-2} |\Delta_\infty u|^2 + 4Q^2 |D u|^2 |D |D u||^2\\
&\quad + 4(p-1)A^2 u^2|D u|^2 + 4(p-2) Q D_i u D_i Q \Delta_\infty u +4Q D_i Q D_{ik}u D_k u |D u|^2\\
&\quad + 4 A(p-1) |D u|^2 u D_i u D_i Q + 8(p-1)AQu \Delta_\infty u.
\end{align*}
Therefore,
\begin{align}\label{1-1}
&a^{ij} D_{ij} F^{q/2}\notag\\
&= \frac{q}{2} F^{q/2 -2} \big[ a^{ij} D_{ij} Q |D u|^2 F +2Q|D |D u||^2 \big( (q-2)Q|D u|^2 - (p-2)F \big) \nonumber\\
&\quad + 4(p-2) D_i u D_i Q \Delta_\infty u |D u|^{-2} (F + (q-2)Q|D u|^2/2) + 2QF|D^2 u|^2\nonumber\\
&\quad + 4 D_i Q D_k u D_{ik}u (F + (q-2)Q|D u|^2/2)\nonumber\\
&\quad + 4(p-2)Q|D u|^{-4} (F + (q-2)Q|D u|^2/2) |\Delta_\infty u|^2 \nonumber\\
&\quad  + 2AFa^{ij} D_i u D_j u + (q-2)a^{ij}|Du|^4 D_i Q D_j Q/2 + 2(p-1)(q-2) A^2 u^2 |D u|^2 \nonumber\\
&\quad + 2A(p-1)(q-2)|D u|^2 u D_i u D_i Q + 4(p-1)(q-2)AQu \Delta_\infty u \big].
\end{align}
Note that in $S_A$,
\begin{equation}\label{S_A}
Q|D u|^2 \le F \le \frac{101}{100} Q |D u|^2.
\end{equation}
By \eqref{a_elliptic},
$$
(q-2)a^{ij}D_i Q D_j Q/2 \ge (q-2)|D Q|^2 /2 \ge 0 \quad \mbox{for}~~q \ge 2,
$$
and
\begin{align}\label{1-2}
&a^{ij} D_{ij} Q |D u|^2 F + 2AFa^{ij} D_i u D_j u\nonumber\\
&= [2(n-1) - 8\kappa_1^{-1}\kappa_2^2 + (p-2)|D u|^{-2} (2|D_{x'} u|^2 - 8\kappa_1^{-1}\kappa_2^2 |D_n u|^2)] |D u|^2 F\nonumber\\
&\quad + 2AFa^{ij} D_i u D_j u\nonumber\\
&\ge \Big( 2(n-1) - 8\kappa_1^{-1}\kappa_2^2(p-1) + 2A \Big) |D u|^2 F\nonumber\\
&\ge \Big( 2(n-1) - 8\kappa_1^{-1}\kappa_2^2(p-1) + 2A \Big) |D u|^4 Q.
\end{align}
We choose $q = \frac{101}{100}(p-2) + 2 > p$, so that
\begin{align*}
&2Q|D |D u||^2 \big( (q-2)Q|D u|^2 - (p-2)F \big)\\
&\ge 2Q|D |D u||^2 \big( (q-2) - \frac{101}{100}(p-2) \big)Q|D u|^2 = 0.
\end{align*}
It remains to control
\begin{align*}
&4(p-2) D_i u D_i Q \Delta_\infty u |D u|^{-2} (F + (q-2)Q|D u|^2/2)\\
&+4 D_i Q D_k u D_{ik}u (F + (q-2)Q|D u|^2/2)\\
&+ 2A(p-1)(q-2)|D u|^2 u D_i u D_i Q + 4(p-1)(q-2)AQu \Delta_\infty u\\
&=: I + II + III + IV.
\end{align*}
Since $p > 2$, we have
$$
\frac{2(p-2)q}{(p-1)(q-2)} = \frac{2 \Big[ \frac{101}{100}(p-2) + 2\Big]}{\frac{101}{100}(p-1)} > 2.
$$ Fix a constant $B \in (2, \frac{2(p-2)q}{(p-1)(q-2)})$. We shrink $r_0$ if necessary so that $|D Q|^2 \le 8 Q$. By Young's inequality and \eqref{S_A},
\begin{align}\label{1-3}
|I| \le & \left( \frac{p-2}{2} - \frac{B(p-1)(q-2)}{4q} \right)|D u|^{-4} |D Q|^2 |\Delta_\infty u|^2 (F + (q-2)Q|D u|^2/2)\nonumber\\
& + C(p) |D u|^2 (F + (q-2)Q|D u|^2/2) \nonumber\\
\le &\left( 4(p-2) - \frac{2B(p-1)(q-2)}{q} \right) Q |D u|^{-4} |\Delta_\infty u|^2 (F + (q-2)Q|D u|^2/2)\nonumber\\
&+ C(p) |D u|^4Q,
\end{align}
where $C(p)$ is some positive constant depending on $p$. By Young's inequality and \eqref{S_A},
\begin{align}\label{1-4}
|II| &\le 4 \Big(\frac{101}{100} + \frac{q-2}{2}\Big) |D Q| |D^2 u| |D u|^3 Q\nonumber\\
& \le 4 \Big(\frac{101}{100} + \frac{q-2}{2}\Big)^2 |D Q|^2 |D u|^4 + Q^2 |D^2 u|^2 |D u|^2 \nonumber\\
& \le 32 \Big(\frac{101}{100} + \frac{q-2}{2}\Big)^2   |D u|^4 Q+ QF |D^2 u|^2,
\end{align}
\begin{align}\label{1-5}
|III| &\le 2(p-1)(q-2) A  |D u|^3|u| |D Q|\nonumber\\
&\le \left( 2 - \frac{4}{B} \right) (p-1)(q-2) A^2 u^2 |D u|^2 + \frac{4B}{B-2}(p-1)(q-2) |D u|^4 Q,
\end{align}
and
\begin{align}\label{1-6}
|IV| \le& \frac{4}{B}(p-1)(q-2) A^2 u^2 |D u|^2 + B(p-1)(q-2)Q^2 |D u|^{-2} |\Delta_\infty u|^2 \nonumber\\
\le& \frac{4}{B}(p-1)(q-2) A^2 u^2 |D u|^2 \nonumber\\
&+ \frac{2B(p-1)(q-2)}{q} Q |D u|^{-4} |\Delta_\infty u|^2 (F + (q-2)Q|D u|^2/2).
\end{align}
Now we choose $A$ large such that
$$
2(n-1) - 8\kappa_1^{-1}\kappa_2^2(p-1) + 2A - C(p)- 32 \Big(\frac{101}{100} + \frac{q-2}{2}\Big)^2 - \frac{4B}{B-2}(p-1)(q-2) > 0.
$$
Then by \eqref{1-1}, \eqref{1-2}, \eqref{1-3}, \eqref{1-4}, \eqref{1-5}, and \eqref{1-6}, $a^{ij} D_{ij} F^{q/2} > 0$ in $\Omega_{r_0} \cap S_A$, and hence $F^{q/2}$ does not achieve its maximum in $\Omega_{r_0} \cap S_A$. This concludes the proof for the case when $p > 2$.

{\em Case 2:} For $p \in(1,2)$, we consider the quantify $F$ given in \eqref{FQ}. A similar argument as above shows that $F$ does not achieve maximum on $(\Gamma_+ \cup \Gamma_-) \cap \overline{\Omega}_{r_0}\cap S_A$. In $\Omega_{r_0} \cap S_A$, we compute
\begin{align}\label{2-1}
&a^{ij} D_{ij} F\notag \\
&= a^{ij} D_{ij} Q |D u|^2 - 4(2-p)|D u|^{-2} D_i u D_i Q \Delta_\infty u + 4 D_i Q D_k u D_{ik} u + 2Q |D^2 u|^2\nonumber\\
&\quad + 2(2-p)Q |D |D u||^2  - 4 (2-p) Q|D u|^{-4} |\Delta_\infty u|^2 + 2A a^{ij} D_i u D_j u,
\end{align}
where $a^{ij}$ is given in \eqref{a_ij} satisfying \eqref{a_elliptic_3}.
By a direct computation and \eqref{a_elliptic_3}, we have
\begin{align}\label{2-2}
&a^{ij} D_{ij} Q |D u|^2 + 2Aa^{ij} D_i u D_j u\nonumber\\
&= [2(n-1) - 8\kappa_1^{-1}\kappa_2^2 + (p-2)|D u|^{-2} (2|D_{x'} u|^2 - 8\kappa_1^{-1}\kappa_2^2 |D_n u|^2)] |D u|^2 \nonumber\\
&\quad + 2Aa^{ij} D_i u D_j u\nonumber\\
&\ge \Big( 2(n-1) - 8\kappa_1^{-1}\kappa_2^2 -2(2-p) + 2A(p-1) \Big) |D u|^2.
\end{align}
Note that
$$
|D u|^{-4} |\Delta_\infty u|^2 \le |D |D u||^2 \le |D^2 u|^2.
$$
Therefore,
\begin{align}\label{2-3}
& 2Q |D^2 u|^2 + 2(2-p)Q |D |D u||^2  - 4 (2-p) Q|D u|^{-4} |\Delta_\infty u|^2\notag\\
&\ge  2(p-1)Q |D^2 u|^2.
\end{align}
It remains to control
$$
- 4(2-p)|D u|^{-2} D_i u D_i Q \Delta_\infty u + 4 D_i Q D_k u D_{ik} u =: I + II.
$$
By Young's inequality and $|D Q|^2 \le 8Q$ for small $r_0$, we have
\begin{align}\label{2-4}
|I| \le& 4(2-p) |D u|^{-1} |D Q| |\Delta_\infty u|\nonumber\\
\le & \frac{p-1}{16} |D Q|^2 |D u|^{-4} |\Delta_\infty u|^2 + \frac{64(2-p)^2}{p-1} |D u|^2\nonumber\\
\le & \frac{p-1}{2} Q|D u|^{-4} |\Delta_\infty u|^2 + \frac{64(2-p)^2}{p-1} |D u|^2,
\end{align}
and
\begin{align}\label{2-5}
|II| \le& 4|D Q| |D u| |D^2 u|\nonumber\\
\le & \frac{p-1}{16} |D Q|^2|D^2 u|^2 + \frac{64}{p-1}|D u|^2\nonumber\\
\le & \frac{p-1}{2} Q|D^2 u|^2 + \frac{64}{p-1}|D u|^2.
\end{align}
Now we choose $A$ large such that
$$
2(n-1) - 8\kappa_1^{-1}\kappa_2^2 -2(2-p) + 2A(p-1) - \frac{64(2-p)^2}{p-1} - \frac{64}{p-1} > 0.
$$
Then by \eqref{2-1}, \eqref{2-2}, \eqref{2-3}, \eqref{2-4}, and \eqref{2-5}, $a^{ij} D_{ij} F > 0$ in $\Omega_{r_0} \cap S_A$, and hence $F$ does not achieve its maximum in $\Omega_{r_0} \cap S_A$. This concludes the proof for the case when $p \in(1, 2)$.
\end{proof}

\bibliographystyle{amsplain}
\begin{bibdiv}
\begin{biblist}

\bib{AKLLL}{article}{
      author={Ammari, H.},
      author={Kang, H.},
      author={Lee, H.},
      author={Lee, J.},
      author={Lim, M.},
       title={Optimal estimates for the electric field in two dimensions},
        date={2007},
        ISSN={0021-7824},
     journal={J. Math. Pures Appl. (9)},
      volume={88},
      number={4},
       pages={307\ndash 324},
  url={https://doi-org.proxy.libraries.rutgers.edu/10.1016/j.matpur.2007.07.005},
      review={\MR{2384571}},
}

\bib{AKL}{article}{
      author={Ammari, H.},
      author={Kang, H.},
      author={Lim, M.},
       title={Gradient estimates for solutions to the conductivity problem},
        date={2005},
        ISSN={0025-5831},
     journal={Math. Ann.},
      volume={332},
      number={2},
       pages={277\ndash 286},
  url={https://doi-org.proxy.libraries.rutgers.edu/10.1007/s00208-004-0626-y},
      review={\MR{2178063}},
}

\bib{BASL}{article}{
      author={Babu\v{s}ka, I.},
      author={Andersson, B.},
      author={Smith, P.J.},
      author={Levin, K.},
       title={Damage analysis of fiber composites. {I}. {S}tatistical analysis
  on fiber scale},
        date={1999},
        ISSN={0045-7825},
     journal={Comput. Methods Appl. Mech. Engrg.},
      volume={172},
      number={1-4},
       pages={27\ndash 77},
  url={https://doi-org.proxy.libraries.rutgers.edu/10.1016/S0045-7825(98)00225-4},
      review={\MR{1685902}},
}

\bib{BLY1}{article}{
      author={Bao, E.},
      author={Li, Y.Y.},
      author={Yin, B.},
       title={Gradient estimates for the perfect conductivity problem},
        date={2009},
        ISSN={0003-9527},
     journal={Arch. Ration. Mech. Anal.},
      volume={193},
      number={1},
       pages={195\ndash 226},
  url={https://doi-org.proxy.libraries.rutgers.edu/10.1007/s00205-008-0159-8},
      review={\MR{2506075}},
}

\bib{BLY2}{article}{
      author={Bao, E.},
      author={Li, Y.Y.},
      author={Yin, B.},
       title={Gradient estimates for the perfect and insulated conductivity
  problems with multiple inclusions},
        date={2010},
        ISSN={0360-5302},
     journal={Comm. Partial Differential Equations},
      volume={35},
      number={11},
       pages={1982\ndash 2006},
  url={https://doi-org.proxy.libraries.rutgers.edu/10.1080/03605300903564000},
      review={\MR{2754076}},
}

\bib{BIK}{article}{
      author={Brander, T.},
      author={Ilmavirta, J.},
      author={Kar, M.},
       title={Superconductive and insulating inclusions for linear and
  non-linear conductivity equations},
        date={2018},
        ISSN={1930-8337},
     journal={Inverse Probl. Imaging},
      volume={12},
      number={1},
       pages={91\ndash 123},
         url={https://doi.org/10.3934/ipi.2018004},
      review={\MR{3810150}},
}

\bib{BudCar}{article}{
      author={Budiansky, B.},
      author={Carrier, G.~F.},
       title={{High Shear Stresses in Stiff-Fiber Composites}},
        date={1984},
        ISSN={0021-8936},
     journal={Journal of Applied Mechanics},
      volume={51},
      number={4},
       pages={733\ndash 735},
         url={https://doi.org/10.1115/1.3167717},
}

\bib{CirSci2}{article}{
      author={Ciraolo, G.},
      author={Sciammetta, A.},
       title={Gradient estimates for the perfect conductivity problem in
  anisotropic media},
        date={2019},
        ISSN={0021-7824},
     journal={J. Math. Pures Appl. (9)},
      volume={127},
       pages={268\ndash 298},
         url={https://doi.org/10.1016/j.matpur.2018.09.006},
      review={\MR{3960144}},
}

\bib{CirSci}{article}{
      author={Ciraolo, G.},
      author={Sciammetta, A.},
       title={Stress concentration for closely located inclusions in nonlinear
  perfect conductivity problems},
        date={2019},
        ISSN={0022-0396},
     journal={J. Differential Equations},
      volume={266},
      number={9},
       pages={6149\ndash 6178},
         url={https://doi.org/10.1016/j.jde.2018.10.041},
      review={\MR{3912777}},
}

\bib{dibenedetto1993higher}{article}{
      author={DiBenedetto, E.},
      author={Manfredi, J.},
       title={On the higher integrability of the gradient of weak solutions of
  certain degenerate elliptic systems},
        date={1993},
        ISSN={0002-9327},
     journal={Amer. J. Math.},
      volume={115},
      number={5},
       pages={1107\ndash 1134},
         url={https://doi.org/10.2307/2375066},
      review={\MR{1246185}},
}

\bib{DLY}{article}{
      author={Dong, H.},
      author={Li, Y.Y.},
      author={Yang, Z.},
       title={Optimal gradient estimates of solutions to the insulated
  conductivity problem in dimension greater than two},
        date={2021},
        note={arXiv:2110.11313, J. Eur. Math. Soc., to appear},
}

\bib{DLY2}{article}{
      author={Dong, H.},
      author={Li, Y.Y.},
      author={Yang, Z.},
       title={Gradient estimates for the insulated conductivity problem: The
  non-umbilical case},
        date={2022},
        note={arXiv:2203.10081},
}

\bib{DongLi}{article}{
      author={Dong, H.},
      author={Li, Z.},
       title={Classical solutions of oblique derivative problem in nonsmooth
  domains with mean {D}ini coefficients},
        date={2020},
        ISSN={0002-9947},
     journal={Trans. Amer. Math. Soc.},
      volume={373},
      number={7},
       pages={4975\ndash 4997},
         url={https://doi.org/10.1090/tran/8042},
      review={\MR{4127868}},
}

\bib{duzaar2010gradient}{article}{
      author={Duzaar, F.},
      author={Mingione, G.},
       title={Gradient estimates via linear and nonlinear potentials},
        date={2010},
        ISSN={0022-1236},
     journal={J. Funct. Anal.},
      volume={259},
      number={11},
       pages={2961\ndash 2998},
         url={https://doi.org/10.1016/j.jfa.2010.08.006},
      review={\MR{2719282}},
}

\bib{MR2823872}{article}{
      author={Duzaar, F.},
      author={Mingione, G.},
       title={Gradient estimates via non-linear potentials},
        date={2011},
        ISSN={0002-9327},
     journal={Amer. J. Math.},
      volume={133},
      number={4},
       pages={1093\ndash 1149},
         url={https://doi.org/10.1353/ajm.2011.0023},
      review={\MR{2823872}},
}

\bib{GorNov}{article}{
      author={Gorb, Y.},
      author={Novikov, A.},
       title={Blow-up of solutions to a {$p$}-{L}aplace equation},
        date={2012},
        ISSN={1540-3459},
     journal={Multiscale Model. Simul.},
      volume={10},
      number={3},
       pages={727\ndash 743},
         url={https://doi.org/10.1137/110857167},
      review={\MR{3022019}},
}

\bib{MR1179331}{article}{
      author={Hamburger, C.},
       title={Regularity of differential forms minimizing degenerate elliptic
  functionals},
        date={1992},
        ISSN={0075-4102},
     journal={J. Reine Angew. Math.},
      volume={431},
       pages={7\ndash 64},
         url={https://doi.org/10.1515/crll.1992.431.7},
      review={\MR{1179331}},
}

\bib{Kel}{article}{
      author={Keller, J.~B.},
       title={{Stresses in Narrow Regions}},
        date={1993},
        ISSN={0021-8936},
     journal={Journal of Applied Mechanics},
      volume={60},
      number={4},
       pages={1054\ndash 1056},
         url={https://doi.org/10.1115/1.2900977},
}

\bib{Krylov}{book}{
      author={Krylov, N.~V.},
       title={Nonlinear elliptic and parabolic equations of the second order},
      series={Mathematics and its Applications (Soviet Series)},
   publisher={D. Reidel Publishing Co., Dordrecht},
        date={1987},
      volume={7},
        ISBN={90-277-2289-7},
         url={https://doi.org/10.1007/978-94-010-9557-0},
        note={Translated from the Russian by P. L. Buzytsky [P. L.
  Buzytski\u{\i}]},
      review={\MR{901759}},
}

\bib{LY}{article}{
      author={Li, Y.Y.},
      author={Yang, Z.},
       title={Gradient estimates of solutions to the conductivity problem with
  flatter insulators},
        date={2021},
        ISSN={1672-4070},
     journal={Anal. Theory Appl.},
      volume={37},
      number={1},
       pages={114\ndash 128},
         url={https://doi.org/10.4208/ata.2021.pr80.12},
      review={\MR{4252031}},
}

\bib{LY2}{article}{
      author={Li, Y.Y.},
      author={Yang, Z.},
       title={Gradient estimates of solutions to the insulated conductivity
  problem in dimension greater than two},
        date={2023},
        ISSN={0025-5831},
     journal={Math. Ann.},
      volume={385},
      number={3-4},
       pages={1775\ndash 1796},
         url={https://doi.org/10.1007/s00208-022-02368-x},
      review={\MR{4566706}},
}

\bib{Lie}{article}{
      author={Lieberman, G.~M.},
       title={Boundary regularity for solutions of degenerate elliptic
  equations},
        date={1988},
        ISSN={0362-546X},
     journal={Nonlinear Anal.},
      volume={12},
      number={11},
       pages={1203\ndash 1219},
         url={https://doi.org/10.1016/0362-546X(88)90053-3},
      review={\MR{969499}},
}

\bib{lieberman1991natural}{article}{
      author={Lieberman, G.~M.},
       title={The natural generalization of the natural conditions of
  {L}adyzhenskaya and {U}ral\cprime tseva for elliptic equations},
        date={1991},
        ISSN={0360-5302},
     journal={Comm. Partial Differential Equations},
      volume={16},
      number={2-3},
       pages={311\ndash 361},
         url={https://doi.org/10.1080/03605309108820761},
      review={\MR{1104103}},
}

\bib{Mar}{article}{
      author={Markenscoff, X.},
       title={Stress amplification in vanishingly small geometries},
        date={1996},
        ISSN={1432-0924},
     journal={Computational Mechanics},
      volume={19},
      number={1},
       pages={77\ndash 83},
         url={https://doi.org/10.1007/BF02824846},
}

\bib{MR474389}{article}{
      author={Uhlenbeck, K.},
       title={Regularity for a class of non-linear elliptic systems},
        date={1977},
        ISSN={0001-5962},
     journal={Acta Math.},
      volume={138},
      number={3-4},
       pages={219\ndash 240},
         url={https://doi.org/10.1007/BF02392316},
      review={\MR{474389}},
}

\bib{Vaz}{article}{
      author={V\'{a}zquez, J.~L.},
       title={A strong maximum principle for some quasilinear elliptic
  equations},
        date={1984},
        ISSN={0095-4616},
     journal={Appl. Math. Optim.},
      volume={12},
      number={3},
       pages={191\ndash 202},
         url={https://doi.org/10.1007/BF01449041},
      review={\MR{768629}},
}

\bib{We}{article}{
      author={Weinkove, B.},
       title={The insulated conductivity problem, effective gradient estimates
  and the maximum principle},
        date={2023},
        ISSN={0025-5831},
     journal={Math. Ann.},
      volume={385},
      number={1-2},
       pages={1\ndash 16},
         url={https://doi.org/10.1007/s00208-021-02314-3},
      review={\MR{4542709}},
}

\bib{Y3}{article}{
      author={Yun, K.},
       title={An optimal estimate for electric fields on the shortest line
  segment between two spherical insulators in three dimensions},
        date={2016},
        ISSN={0022-0396},
     journal={J. Differential Equations},
      volume={261},
      number={1},
       pages={148\ndash 188},
  url={https://doi-org.proxy.libraries.rutgers.edu/10.1016/j.jde.2016.03.005},
      review={\MR{3487255}},
}

\end{biblist}
\end{bibdiv}

\end{document}